\documentclass[11pt]{amsart}
\usepackage{stmaryrd}
\usepackage{xcolor}
\usepackage{epsfig}
\usepackage{enumerate}
\usepackage{amsmath}
\usepackage{amssymb}
\usepackage{amscd}
\usepackage{graphicx}
\usepackage{lineno}
\usepackage{pstricks}
\usepackage{tocvsec2}
\usepackage{mathtools}
\allowdisplaybreaks[4]

\usepackage [backref, colorlinks, citecolor=red, anchorcolor=black, linkcolor=blue]{hyperref}

\usepackage{amsfonts}
\usepackage[top=35mm, bottom=35mm, left=30mm, right=30mm]{geometry}
\usepackage{verbatim}
\usepackage{graphicx, amssymb, amsmath,amsthm}
\allowdisplaybreaks[4]
\usepackage{appendix}
\usepackage{mathrsfs}
\usepackage{cite}

\makeatletter
\@namedef{subjclassname@2020}{%
	\textup{2020} Mathematics Subject Classification}
\makeatother

\newcommand\SL{\operatorname{SL}}
\newcommand\Sp{\operatorname{Sp}}
\newcommand\GL{\operatorname{GL}}

\newcommand\Supp{\operatorname{Supp}}

\newcommand{\pmat}[4]{\begin{pmatrix} #1 & #2 \\ #3 & #4\end{pmatrix}}
\newcommand{\smallpmat}[4]{\big(\begin{smallmatrix} #1 & #2 \\ #3 & #4\end{smallmatrix}\big)}

\def\Z{\mathbb{Z}}
\def\C{\mathbb{C}}
\def\E{\mathbb{E}}
\def\Q{\mathbb{Q}}
\def\F{\mathbb{F}}

\usepackage{xcolor}

\makeatletter
\NewCommandCopy\@@pmod\pmod
\DeclareRobustCommand{\pmod}{\@ifstar\@pmods\@@pmod}
\def\@pmods#1{\mkern4mu({\operator@font mod}\mkern 6mu#1)}
\makeatother

\usepackage{xcolor}

\newtheorem{theorem}{Theorem}[section]
\newtheorem{definition}[theorem]{Definition}
\newtheorem{lemma}[theorem]{Lemma}

\newtheorem{corollary}[theorem]{Corollary}

\newtheorem{proposition}[theorem]{Proposition}

\numberwithin{equation}{section}

\numberwithin{equation}{section}
\renewcommand*{\backref}[1]{}

\makeatletter

\def\MRbibitem{\@ifnextchar[\my@lbibitem\my@bibitem}

\def\mybiblabel#1#2{\@biblabel{{\hyperref{http://www.ams.org/mathscinet-getitem?mr=#1}{}{}{#2}}}}

\def\myhyperanchor#1{\Hy@raisedlink{\hyper@anchorstart{cite.#1}\hyper@anchorend}}

\def\my@lbibitem[#1]#2#3#4\par{%
	\item[\mybiblabel{#2}{#1}\myhyperanchor{#3}\hfill]#4%
	\@ifundefined{ifbackrefparscan}{}{\BR@backref{#3}}%
	\if@filesw{\let\protect\noexpand\immediate
		\write\@auxout{\string\bibcite{#3}{#1}}}\fi\ignorespaces%
}

\def\my@bibitem#1#2#3\par{%
	\refstepcounter\@listctr
	\item[\mybiblabel{#1}{\the\value\@listctr}\myhyperanchor{#2}\hfill]#3%
	\@ifundefined{ifbackrefparscan}{}{\BR@backref{#2}}%
	\if@filesw\immediate\write\@auxout
	{\string\bibcite{#2}{\the\value\@listctr}}\fi\ignorespaces%
}

\makeatother

\subjclass[2020]{Primary: 11P55, 11D09, 11L20, 11N36.}
\keywords{Quadratic forms, group representation, restriction estimate, $W$-trick.}

\author{Zhaochen Pei}
\address[Zhaochen Pei]
{School of Mathematical Sciences, University of Science and Technology of China, Hefei, Anhui, 230026, P.R. China}
\email{bacon525@mail.ustc.edu.cn}

\author{Maoru Tan}
\address[Maoru Tan]
{School of Mathematical Sciences, University of Science and Technology of China, Hefei, Anhui, 230026, P.R. China}
\email{lwtanmr@mail.ustc.edu.cn}

\author{Weiwen Zhang}
\address[Weiwen Zhang]
{School of Mathematical Sciences, University of Science and Technology of China, Hefei, Anhui, 230026, P.R. China}
\email{llcwwzhang@mail.ustc.edu.cn}

\author{Lilu Zhao}
\address[Lilu Zhao]
{School of Mathematical Sciences, University of Science and Technology of China, Hefei, Anhui, 230026, P.R. China}
\email{zhaolilu@ustc.edu.cn}

\begin{document}

	\begin{abstract}
		Let $Q$ be a translation invariant indefinite quadratic form in $8$ variables. Let $\mathcal{P}$ be the set of all primes. Let $\mathcal{A}\subset\mathcal{P}\cap\{1,\dots,X\}$. By developing the recent work of Green \cite{Green}, we prove that subject to a mild condition on the form $Q$ there exist pairwise distinct primes $p_{1},\dots,p_{8}\in \mathcal{A}$ satisfying $Q(p_{1},\dots,p_{8})=0$ provided that
\[\frac{|\mathcal{A}|}{X/\log X}\gg_{Q}(\log\log X)^{-c}\] 
		for some absolute constant $c>0$.
	\end{abstract}

	
	\title{Translation invariant quadratic forms in $8$ variables and dense sets of primes}
	\maketitle

	\parskip 10pt         

	
\section{Introduction}

For a positive integer $X$, we use the notation $[X]:=\{1,2,\ldots,X\}$. Let $\rho(X)$ denote the maximal density of a subset $\mathcal{A}$ of $[X]$ satisfying that $\mathcal{A}$ contains no nontrivial arithmetic progression of length three. Roth \cite{R1} proved that 
\begin{align}\label{density}\rho(X)\ll (\log\log X)^{-1}.\end{align}
Roth's theorem has been widely and deeply developed. For example, Green \cite{Green2005} obtained a remarkable analogue of Roth's theorem in primes. For the improvement upon the density \eqref{density}, one may refer to \cite{BS,KM} and references therein. 

In recent years, there has been much work on extensions of Roth's theorem to quadratic equations. Browning and Prendiville \cite{BP} studied diagonal quadratic equations over dense sets of integers. Chow \cite{Chow} considered the diagonal equation of degree $d$ over dense sets of primes. Keil \cite{Keil,Keilrefine} considered the analogue of Roth's theorem for non-diagonal translation invariant quadratic forms over dense sets of integers. Keil's result was improved by Zhao \cite{Zhao2019,Zhao2022}. One may also refer to \cite{CLP} and \cite{HH} for related results.

In this paper, we focus on quadratic forms in $8$ variables. We introduce \begin{equation*}
		Q(x,y)=x^{T}ax+x^{T}by+y^{T}cy=(x^{T},y^{T})\begin{pmatrix}
			a &b/2\\
			b^{T}/2 & c
		\end{pmatrix}\begin{pmatrix}
			x\\
			y
		\end{pmatrix},
	\end{equation*}
	where $a,b,c\in \mathrm{Mat}_{4}(\mathbb{Z})$ with $a,c$ symmetric. The following plays an important role
\begin{align}\label{defineDelta}\Delta:=4b^{-1}ab^{-T}c-I_4.\end{align} 
We define \begin{equation}\label{definenu}
		\nu_{Q}(X)=\sum_{\substack{1<x_{1},\dots,x_{8}\leqslant X\\ Q(x_{1},\dots,x_{8})=0}} \Lambda(x_{1})\cdots\Lambda(x_{8}),
	\end{equation}
	where $\Lambda(\cdot)$ is the von Mangoldt function. Very recently, Green \cite{Green} obtained the following impressive result.
	\begin{theorem}[Green \cite{Green}]\label{green}
		Suppose that $\mathrm{det} \,ab \neq0$ and $\Delta$ has
		four distinct eigenvalues which lie in
		$\overline{\mathbb{Q}}\backslash\{-1,0\}$. Then for any
		constant $A>0$, one has
		\[
		\nu_{Q}(X)=\mathfrak{S}_{Q}^\ast X^{6}+O_{A,Q}(X^{6}\log^{-A}X),
		\]
		where $\mathfrak{S}_{Q}^\ast$ is the product of local densities.
	\end{theorem}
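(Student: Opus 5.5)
This is Green's theorem, cited as a result, but I will sketch how I would prove it. The approach is the circle method combined with a group-theoretic (representation-theoretic) treatment of the minor arcs. I write
\[
\nu_Q(X)=\int_0^1 \sum_{x,y\in[X]^4}\Lambda(x)\Lambda(y)\,e(\alpha Q(x,y))\,d\alpha ,
\]
with $\Lambda(x)=\prod_i\Lambda(x_i)$, and split $[0,1]$ into major arcs (near $a/q$ with $q\le \log^B X$) and minor arcs.

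On the major arcs, the Siegel--Walfisz theorem gives the usual asymptotics. The singular series and singular integral then assemble into $\mathfrak S_Q^\ast X^6$ with error $O(X^6\log^{-A}X)$, and the extra power of $X^{-2}$ comes from the quadratic condition. This part is routine.

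The minor arcs are the main obstacle. Since $\Lambda$ has no exploitable structure in the variables jointly, I would first fix $\alpha$ and view the phase as a bilinear-plus-quadratic form in $x$ and $y$. I would then apply Cauchy--Schwarz in $x$ to remove the weight $\Lambda(x)$. This produces a sum over $y,y'$ of $\Lambda(y)\Lambda(y')e(\alpha(y^Tcy-y'^Tcy'))$ times an incomplete sum over $x$ of $e(\alpha(x^Tax+x^Tb(y-y')))$.

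A second differencing in $x$ linearizes the problem and, roughly, counts pairs $(y,y')$ and shifts $h$ with $\|\alpha(2ah+b(y-y'))\|$ small. Green's key idea is to rewrite the whole configuration using the matrix $\Delta$. After the change of variables suggested by $b^{-1}ab^{-T}c$, the quadratic form becomes an orbit-counting problem for an action of a torus, namely the centralizer of $\Delta$ in $\GL_4$, on $\Z^4$-points. The hypothesis that $\Delta$ has four distinct eigenvalues outside $\{-1,0\}$ guarantees that this torus is a genuine maximal torus and that $\Delta$ and $I+\Delta$ are invertible, so no degenerate subvariety absorbs the mass.

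Concretely, I expect a minor-arc bound of the shape
\[
\sup_{\alpha\in\mathfrak m}\Big|\sum_{x,y}\Lambda(x)\Lambda(y)e(\alpha Q)\Big|\ll X^8\log^{-A}X
\]
to fail. It is too weak on its own, since it would have to be multiplied by the measure of the minor arcs. So instead I would bound the $L^1$ integral over the minor arcs directly by an $L^2$ argument. This reduces to showing that the number of solutions of an auxiliary system (obtained after the Cauchy--Schwarz steps) weighted by $\Lambda$ is at most $X^{6}\log^{-A}X$ beyond the main term. To count those, I would use a Type I/II (Vaughan) decomposition of $\Lambda(y)$. The Type II sums become bilinear forms in $y=uv$, and I would handle them by equidistribution of $\alpha$-dilated quadratic phases over the torus orbits. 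The eigenvalue condition is precisely what prevents these phases from collapsing to lower rank.

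This equidistribution/orbit-counting step, with uniformity in $q$ up to $\log^B X$, is where I expect the real difficulty. I would attempt it first via Weyl differencing, reducing to bounds for exponential sums over rank-$\ge 2$ linear forms, together with the standard treatment of Siegel zeros via the polylogarithmic savings.
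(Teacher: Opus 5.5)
Your major-arc treatment is standard and fine. The minor-arc part, however, has a genuine gap: it never supplies the mechanism that makes eight variables possible.

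\textbf{Why pointwise methods cannot work.} The Cauchy--Schwarz and differencing steps you describe lead, up to logarithms, to $|S(r/q+\eta)|\ll X^8q^{-2}(1+X^2|\eta|)^{-2}$; compare Lemmas~\ref{lemmastand} and~\ref{lemmasieve}. Integrating in $\eta$ and summing over the $\phi(q)$ values of $r$ leaves about $X^6/q$ for each $q$. Since $\sum_{\log^BX<q\leqslant X}q^{-1}$ is not small, you are short by exactly a factor $q^{-c}$ in the $q$-aspect.

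That factor cannot be gained pointwise in $r$. For $q>\log^BX$ nothing is known about primes modulo $q$. So after Fourier-inverting the archimedean factor, the weights must be treated as arbitrary $1$-bounded $f_1,f_2$ on $(\Z/q\Z)^4$. For a single $r$, the bound $|T_{f_1,f_2}(r)|\leqslant\|f_1\|_2\|f_2\|_2$ of Lemma~\ref{lemma52} is then attained: take $f_1$ aligned with $\rho(g^{(r)})f_2$. The saving must therefore come from averaging over $r$, i.e.\ from an inequality like Proposition~\ref{proposition51}.

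Nothing in your plan produces this:
\begin{itemize}
\item Vaughan's identity and Type I/II sums exploit the structure of $\Lambda$, which is exactly what is unavailable in this range.
\item Further differencing cannot beat a pointwise bound that is already sharp for individual $r$.
\item The range you single out, $q\leqslant\log^BX$, is the easy major-arc range; the hard range is $\log^BX<q\leqslant X$.
\item $\int_{\mathfrak m}|S|$ is not a weighted solution count. Cauchy--Schwarz and Parseval in $\alpha$ only give $\int_0^1|S|\ll X^7$ up to logarithms, which is worse than the target $X^6$.
\end{itemize}

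\textbf{What is actually needed (Green's route, followed in this paper).} One writes $|T_{f_1,f_2}(r)|=|\langle\overline{f_1},\rho(g^{(r)})f_2\rangle|$. Here $\rho$ is the Weil representation of $\Sp_8(\Z/q\Z)$ and $g=g(Q)$ is the symplectic element attached to $Q$. The average over $r$ of these matrix coefficients is controlled by two inputs:
\begin{itemize}
\item almost uniform distribution of convolution powers of the measure on $\{g^{(r)}\}$;
\item quasirandomness: every irreducible subrepresentation of $\rho|_{\Gamma_q}$, where $\Gamma_q=\langle g^{-(r)}g^{(s)}\rangle$, has dimension $\geqslant q^{1-o(1)}$.
\end{itemize}

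The eigenvalue hypothesis on $\Delta$ enters precisely here, not through a torus or orbit counting. Via the block DUL factorisation, $\tau\Gamma_p\tau^{-1}\leqslant\SL_2(\F_p[\Delta])\cong\prod_i\SL_2(\F_p(\alpha_i))$. When no $\alpha_i$ equals $0$ or $-1$, $\tau\Gamma_p\tau^{-1}$ is all of this product. Every nontrivial irreducible representation of the product has dimension $\geqslant(p-1)/2$. An eigenvalue $0$ or $-1$ instead produces a unipotent or Borel factor (Lemma~\ref{baby case}) with one-dimensional characters; this is the degeneracy Green's hypothesis excludes, and which the present paper circumvents via the subgroup $\Theta_p$.

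The centraliser torus $\F_p[\Delta]^\times$ you point to appears only as the subgroup of dilations $s(P)$. The argument is carried by $\SL_2$ over the algebra $\F_p[\Delta]$, and no Type I/II analysis of $\Lambda$ is involved.
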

Earlier results on this topic were obtained by Liu \cite{Liu} in $10$ variables and by Zhao \cite{Zhao2016} in $9$ variables. The restriction on $Q$ introduced in Theorem \ref{green} is generic in the sense that almost all quadratic forms satisfy the above conditions. Green \cite{Green} pointed out that it seems to be of interest to weaken them as far as possible. We shall first slightly weaken the conditions in Theorem \ref{green}. For convenience, we introduce the following refined generic condition.
	\begin{definition}\label{pan}
		Suppose that $Q(x,y)=x^{T}ax+x^{T}by+y^{T}cy
		$ 
		is a quadratic form in 8 variables. We call $Q$ pan-generic if $\mathrm{det} \,ab \neq0$ and $\Delta=4b^{-1}ab^{-T}c-I_4$ has
		four distinct eigenvalues in
		$\overline{\mathbb{Q}}$, and we denote these two conditions together by \textbf{(PG)}.
	\end{definition} 
Let
\begin{equation*}
		\frac{X}{2}\leqslant Y_{0}<Y_{0}+Y\leqslant X, \qquad Y>\frac{X}{\log X},
	\end{equation*}
and	denote by $I$ the interval
	\begin{equation*}
		I=(Y_{0},Y_{0}+Y].
	\end{equation*}
	Let
	\begin{equation*}
		0\leqslant b<W\leqslant \log X \text{ and } (b,W)=1.
	\end{equation*}
In order to apply Green's $W$-trick, we  introduce
	\begin{equation}\label{definenuW}
		\nu_{Q}(b,W;I)=\sum_{\begin{subarray}{c}
				x_{1},\dots,x_{8}\in I \\ Q(\mathbf{x})=0\\ \mathbf{x}\equiv b\pmod{W}
		\end{subarray}} \Lambda(x_{1})\cdots\Lambda(x_{8}),
	\end{equation}
	where $\mathbf{x}\equiv b\pmod{W}$ means $x_{j}\equiv b\pmod{W}$ for all $1\leqslant j\leqslant 8$. Note that \eqref{definenuW} can be viewed as an extension of \eqref{definenu}. We understand that to capture prime solutions to $Q=0$, one may need to consider longer intervals rather than the dyadic interval. We choose this dyadic interval for convenience and we can also prove that Theorem \ref{extension} holds with $[1,X]$ instead of the dyadic interval $[X/2,X]$.

Now we are able to introduce the following.
	\begin{theorem}
	  \label{extension}
		Suppose that $Q$ is a pan-generic indefinite quadratic form. Then for any constant $A>0$ we have
		\begin{equation}\label{eqextension}
			\nu_{Q}(b,W;I)=\mathfrak{S}^\ast_{Q}(W,b)\mathfrak{K}_Q(I)+O_{A,Q}(Y^{6}W^{2}\phi(W)^{-8}\log^{-A}X),
		\end{equation}
		where $\phi(\cdot)$ is Euler’s totient function, the singular series $\mathfrak{S}^\ast_{Q}(W,b)$ and the singular integral $\mathfrak{K}_Q(I)$ are defined in \eqref{definesingularseries} and \eqref{definesingularint} respectively.
	\end{theorem}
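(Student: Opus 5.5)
We follow Green's strategy from Theorem \ref{green}, making it uniform in the modulus $W$ and the interval $I$, and replacing the hypothesis that the eigenvalues avoid $\{-1,0\}$ by the weaker condition \textbf{(PG)}. Write $\mathbf{x}=(x,y)$ with $x,y\in\mathbb{Z}^4$. The count is
\[
\nu_Q(b,W;I)=\int_0^1 \sum_{\substack{x,y\in I^4\\ x\equiv y\equiv b \,(W)}}\Lambda(x)\Lambda(y)\,e(\alpha Q(x,y))\,d\alpha .
\]
Following Green, we do not apply the circle method directly. Instead we use the bilinear structure $x^Tby$: the substitution $x\mapsto x$, $y\mapsto y$ turns $Q=0$ into $x^Tax+x^Tby+y^Tcy=0$. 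We decompose $\Lambda=\Lambda^\sharp+\Lambda^\flat$, where $\Lambda^\sharp$ is a Cramér/Heath-Brown type model built from a smoothed sieve weight at level $X^{\varepsilon}$ and restricted to the progression $b\bmod W$.

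The main term comes from replacing all eight copies of $\Lambda$ by $\Lambda^\sharp$. This reduces to counting integer solutions of $Q=0$ in boxes lying in the lattice $b+W\mathbb{Z}^8$, weighted by divisor-type sums. It can be evaluated by the standard circle method for quadratic forms in $8\geq5$ variables (Heath-Brown's delta method, or a Kloosterman refinement). The resulting $p$-adic densities factor into $\mathfrak{S}^\ast_Q(W,b)$, and the real density gives $\mathfrak{K}_Q(I)$. The normalisation $W^2\phi(W)^{-8}$ appears naturally: $\phi(W)^{-8}$ comes from the progression density of primes, and $W^{2}$ from the congruence lifting of the quadric (this needs $W\leq\log X$ together with the indefiniteness of $Q$). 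The error must be uniform in $W$, which is routine given the polynomial dependence of delta-method errors on the modulus.

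The difference terms are handled by telescoping. Each contains at least one $\Lambda^\flat$, which is Fourier-small in the sense that its correlations with nilsequences/major arcs are $\ll\log^{-A}X$ by Siegel–Walfisz in progressions mod $W$. Green's key step is a ``group-theoretic'' Cauchy–Schwarz argument. Fixing $y$ makes the condition linear-plus-quadratic in $x$, and repeated Cauchy–Schwarz in $x$ and $y$ controls the count by an average over pairs whose structure is governed by the action of matrices $g$ commuting with the relevant quadratic data. The eigenvalues of $\Delta$ determine the relevant torus in $\mathrm{GL}_4$, and the count reduces to bilinear forms $\sum f(x)g(\gamma x)$ over $\gamma$ in an orbit. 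Green needs the eigenvalues distinct so that the centralizer is a maximal torus. The condition $\lambda\neq 0,-1$ enters because $\Delta=0$ or $\Delta+I=4b^{-1}ab^{-T}c$ being singular degenerates the change of variables. We remove this condition as follows. If $\Delta$ has eigenvalue $-1$, then $a$ or $c$ is singular, and we decompose $Q$ along $\ker a$ or $\ker c$, where the form becomes partly bilinear and the $x$-variables in the kernel enter only linearly. There a direct Type I/II (Vinogradov/Siegel–Walfisz) estimate gives savings, and combining it with the restriction estimate for the remaining variables closes the argument. The eigenvalue $0$ case is treated analogously via the kernel of $\Delta$.

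The main obstacle is this last step. We must adapt Green's representation-theoretic minor-arc bound to degenerate eigenvalues, in each case exhibiting a linear subspace on which one variable appears linearly, and run his argument with all constants uniform in $W\le\log X$ and $Y\ge X/\log X$. Following Green, we use a restriction estimate in $L^{p}$ with $p<8$ for $\Lambda^\flat$ restricted to the quadric. This supplies the room needed to absorb losses from the $W$-trick, and it yields the error $O(Y^6W^2\phi(W)^{-8}\log^{-A}X)$.
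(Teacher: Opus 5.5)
\textbf{There is a genuine gap, at the one step that goes beyond Green: the eigenvalues $0$ and $-1$ of $\Delta$.}

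Your remedy is to split $Q$ along $\ker c$ or $\ker\Delta$, note that the kernel variables ``enter only linearly'', and run a Type I/II estimate. This fails because the variables that must be prime are the coordinates $x_1,\dots,x_8$, and a kernel direction is in general not a coordinate direction. Two identities make this concrete: $\mathrm{rank}\,Q=4+\mathrm{rank}\,\Delta$, and $\det(\Delta+I)=4^4\det a\det c/(\det b)^2$.

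\emph{Eigenvalue $0$.} In the case the paper is really after, a translation-invariant $Q$ has Gram matrix annihilating $\mathbf{1}$, so $\Delta$ has eigenvalue $0$. But the kernel is spanned by $(1,\dots,1)$, so no prime variable enters linearly or drops out.

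\emph{Eigenvalue $-1$.} A singular $c$ does not make any individual $y_i$ enter linearly; think of $y^Tcy=(y_1+y_2)^2$.

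\emph{Persistence mod $p$.} Since the degenerate eigenvalue lies in $\overline{\mathbb{Q}}$, it persists modulo every prime. It therefore cannot be absorbed into the bad part $q_1$ of the modulus either.

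So there is no Type I/II sum to estimate, and ``combining with the restriction estimate for the remaining variables'' has no content. Your account of where Green uses $\lambda\notin\{0,-1\}$ (a degenerate change of variables, a centraliser being a maximal torus) is also not where that hypothesis enters. The proposed fix is aimed at the wrong target.

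\textbf{Where the hypothesis actually enters.} It is used only in the $\mathfrak{m}_2$ bound, Lemma~\ref{m2}. That bound rests on Proposition~\ref{proposition51}, which the Weil representation reduces to a representation-theoretic statement: every irreducible constituent of (the dilation-twisted) $\rho|_{\Gamma_p}$ must have dimension $\geqslant(p-1)/2$. Under $\Phi$ one has $\tau\Gamma_p\tau^{-1}=\prod_i M_{\alpha_i}$ (Lemma~\ref{gamma_p}, via Goursat). The factors with $\alpha_i\in\{0,-1\}$ are the unipotent group $U_p$ and the Borel $R_p$. These are not quasirandom; they have one-dimensional representations.

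\textbf{The missing idea.} Pass to the subgroup $\Theta_p\leqslant\Gamma_p$ that is trivial on those factors, so $\Theta_p\cong\prod_{j\in\mathcal J}\SL_2(\F_p(\alpha_j))$. Then show that $\rho|_{\Theta_p}$ has no nonzero invariant vector; this is stable under dilation twists by Lemma~\ref{invariant}. The argument has two steps.
\begin{enumerate}
\item The quadratic modulation $l(CN)$ with $N=\Delta(\Delta+I)\in Y_p$, combined with dilations $s(P)$ for $P\in X_p$, forces an invariant $f$ to be supported on $\oplus_{i\in\mathcal I}W_i$.
\item Choose $P_0\in X_p$ with $\det P_0$ a quadratic non-residue. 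Then $s(P_0)$ acts on $\oplus_{i\in\mathcal I}W_i$ by the scalar $-1$, so $f=0$ there.
\end{enumerate}
Lemma~\ref{sl2-facts} then gives the dimension bound.

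Everything else is the standard circle method: Lemma~\ref{major} handles the major arcs, and Lemmas~\ref{lemmastand} and~\ref{lemmasieve} handle $\mathfrak m_1$. Your $\Lambda^\sharp/\Lambda^\flat$ model with the delta method is a possible alternative for the main term. Without a correct treatment of the degenerate eigenvalues, though, you have no control over the terms containing $\Lambda^\flat$.
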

Under the assumption $\det \, ab\neq 0$ we have
	\begin{equation*}
		\begin{pmatrix}
			a &b/2\\
			b^{T}/2 & c
		\end{pmatrix}=\begin{pmatrix}
			1 & 0\\
			b^{T}a^{-T}/2 & 1
		\end{pmatrix}\begin{pmatrix}
			a & 0\\
			0 & b^{T}a^{-1}b\Delta/4
		\end{pmatrix}\begin{pmatrix}
			1 & a^{-1}b/2\\
			0 & 1
		\end{pmatrix}.
	\end{equation*}
The pan-generic condition implies that $\mathrm{rank}(Q)\geqslant 7$. This new generic condition weakens the restriction on $Q$ in Theorem \ref{green} slightly. However, it is crucial for us to consider translation invariant quadratic forms. We say $Q$ is translation invariant if 
$$\begin{pmatrix}
			a &b/2\\
			b^{T}/2 & c
		\end{pmatrix}\mathbf{1}=\mathbf{0}.$$
 We have a direct consequence of Theorem \ref{extension} in the case that $Q$ is translation invariant.
	\begin{corollary}
	  \label{extensioncorollary}
		Suppose that $Q$ is a  pan-generic translation invariant indefinite quadratic form. Then for any constant $A>0$  we have
		\begin{equation*}
			\nu_{Q}(b,W;I)=\mathfrak{S}_{Q}(W)\mathfrak{I}_{Q}\frac{W^{2}Y^{6}}{\phi(W)^{8}}+O_{A,Q}(Y^{6}W^{2}\phi(W)^{-8}\log^{-A}X),
		\end{equation*}
		where  $\mathfrak{S}_{Q}(W)$ and $\mathfrak{I}_{Q}$ are defined in \eqref{definesingularseriesinv} and \eqref{definesingularintinv} respectively. Moreover, there exists a positive number $C_{Q}$ (independent of $W$), such
		that
		\[  \mathfrak{S}_{Q}(W)\mathfrak{I}_{Q}>C_{Q}. \]
	\end{corollary}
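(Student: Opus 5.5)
We deduce the corollary from Theorem \ref{extension} in two steps: first identify the main term $\mathfrak{S}^\ast_{Q}(W,b)\mathfrak{K}_Q(I)$ in the translation invariant case, then bound it below uniformly in $W$.

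\textbf{The singular integral.} Since $Q$ is translation invariant, $Q(\mathbf{x}+t\mathbf{1})=Q(\mathbf{x})$ for every $t$. Substituting $\mathbf{x}=Y_0\mathbf{1}+Y\mathbf{u}$ in $\mathfrak{K}_Q(I)$ therefore removes $Y_0$ completely. By homogeneity $Q(Y\mathbf{u})=Y^2Q(\mathbf{u})$, so the singular integral over $I^8$ (a density of $Q=0$, costing $Y^{8}\cdot Y^{-2}$) becomes $Y^6\mathfrak{I}_Q$. Here $\mathfrak{I}_Q$ is the singular integral of $Q$ over $(0,1]^8$, and it does not depend on $Y_0$ or $Y$.

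\textbf{The singular series.} Write $\mathbf{x}=b\mathbf{1}+W\mathbf{z}$. Translation invariance gives $Q(\mathbf{x})=W^2Q(\mathbf{z})$. Hence the local factors of $\mathfrak{S}^\ast_Q(W,b)$ at primes $p\mid W$ do not depend on $b$. The only normalisation left is the $\Lambda$-weight density $(W/\phi(W))^8$, and the factor $W^{-6}$ coming from rescaling the $Y^6$ volume by $W$. I expect the definitions to combine as
\[
\mathfrak{S}^\ast_Q(W,b)\mathfrak{K}_Q(I)=\mathfrak{S}_Q(W)\mathfrak{I}_Q\frac{W^2Y^6}{\phi(W)^8},
\]
with $\mathfrak{S}_Q(W)$ a product over $p\nmid W$ of prime-restricted local densities, times the $p\mid W$ factors, which become densities of solutions to $Q(\mathbf{z})\equiv0$ modulo powers of $p$ with $\mathbf{z}$ unrestricted. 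Checking this against the actual definitions \eqref{definesingularseries} and \eqref{definesingularseriesinv} is routine bookkeeping. The error term is then exactly the one in Theorem \ref{extension}.

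\textbf{Lower bound.} The main obstacle is showing $\mathfrak{S}_Q(W)\mathfrak{I}_Q>C_Q$ uniformly in $W$. I split it into three parts.

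\begin{enumerate}
\item[(i)] $\mathfrak{I}_Q>0$. The form $Q$ is indefinite, translation invariant and of rank $\ge 7$, so it has a nonsingular real zero. I would construct one in the open cube: start from $\tfrac12\mathbf{1}$, which is a zero since $Q(\mathbf{1})=0$. Take a vector $\mathbf{v}$ with $\nabla Q(\tfrac12\mathbf{1})=0$; note the gradient there vanishes by translation invariance. Indefiniteness lets us perturb to $\tfrac12\mathbf{1}+\varepsilon\mathbf{v}$ with $Q(\mathbf{v})=0$ and $\nabla Q(\mathbf{v})\ne0$. Such a $\mathbf{v}$ exists because the restriction of $Q$ to a complement of $\mathbf{1}$ is indefinite of rank $\ge 6$. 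The real density at this zero is positive.
\item[(ii)] Large primes $p\nmid W$. Use Weyl/Gauss sum bounds. Since rank $\ge7$, the $p$-th term of the singular series is $O(p^{-3/2})$ or better after the prime restriction. So $\prod_{p>p_0(Q)}(1+O(p^{-3/2}))\gg 1$.
\item[(iii)] Each small prime $p\le p_0(Q)$. Whether $p\mid W$ or not, the local factor is positive. For $p\mid W$ the variables are unrestricted and $\mathbf{0}$ is a solution. Lifting from a nonsingular solution mod $p^{k}$, which exists by rank $\ge 7$ (or via Hensel's lemma at the point $\tfrac{1}{?}$), gives positivity. For $p\nmid W$ with the unit restriction, use $\mathbf{x}=\mathbf{1}$: then $Q(\mathbf{1}+p^k\mathbf{z})=p^{2k}Q(\mathbf{z})$, and a nonsingular $p$-adic zero of $Q$ lifts to unit solutions.
\end{enumerate}

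Since only finitely many primes enter (iii) and each factor takes one of two possible values, depending only on whether $p\mid W$, the minimum over all $W$ gives the constant $C_Q$.
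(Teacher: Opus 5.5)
Your route is the paper's. The corollary comes from Theorem \ref{extension} together with $\mathfrak{K}_Q(I)=Y^6\mathfrak{I}_Q$, the identity $\mathfrak{S}^\ast_Q(W,b)=\frac{W^2}{\phi(W)^8}\mathfrak{S}_Q(W)$, and positivity. The paper simply imports the last three as Lemmas \ref{lemma31} and \ref{lemma32}. Your change of variables for the singular integral is correct. The ``routine bookkeeping'' for the series also works out. After $\mathbf{x}=b\mathbf{1}+W\mathbf{z}$ and normalising by $\phi(W)^{8}$:
\begin{itemize}
\item for $p^a\parallel W$, the $p$-part is $p^{2a}\sigma_p$: the terms $k\le 2a$ give $\sum_{k\le 2a}\phi(p^k)=p^{2a}$, and the terms $k>2a$ give $p^{2a}B(p^{k-2a})$;
\item for $p\nmid W$, the substitution is a bijection on units and gives $\sigma_p^\ast$.
\end{itemize}

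There is, however, a real hole in your uniform lower bound. Your case split covers primes $p>p_0(Q)$ with $p\nmid W$ in (ii) and primes $p\le p_0(Q)$ in (iii). It never treats primes $p>p_0(Q)$ that divide $W$. These can occur, since $W\le\log X$ may have large prime factors, and the density increment replaces $W$ by $Wq$. For such $p$ the factor in \eqref{definesingularseriesinv} is the unrestricted $\sigma_p$, not $\sigma_p^\ast$. Without a bound $\sigma_p\ge 1-Cp^{-3/2}$, the product over these primes could tend to $0$ as $W$ grows, and $C_Q$ would not be uniform.

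The fix is short. For $p$ large, $Q \bmod p$ induces a nondegenerate form in $7$ (odd) variables on $\mathbb{F}_p^8/\mathbb{F}_p\mathbf{1}$. Hence $Q\equiv 0$ has exactly $p^7$ solutions in $\mathbb{F}_p^8$, so $B(p)=0$. Also $|S(p^k,r)|=p^{9k/2}$, which gives $B(p^k)\ll p^{-5k/2}$. So $\sigma_p=1+O(p^{-5})$, and then $\prod_{p>p_0}\min(\sigma_p,\sigma_p^\ast)\gg_Q 1$.

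Two smaller points:
\begin{itemize}
\item In (i) the sentence about $\mathbf{v}$ is garbled, but the intended construction works. Take $\mathbf{v}$ a nonsingular real zero; then $\tfrac12\mathbf{1}+\varepsilon\mathbf{v}$ satisfies $Q=\varepsilon^2Q(\mathbf{v})=0$ and has gradient $\varepsilon\nabla Q(\mathbf{v})\neq 0$.
\item In (iii), $\mathbf{0}$ is a singular zero and plays no role. The nonsingular $p$-adic zero you need comes from isotropy over $\mathbb{Q}_p$ of the nondegenerate $7$-variable form induced on $\mathbb{Q}_p^8/\mathbb{Q}_p\mathbf{1}$. At $p=2$ you need the strong form of Hensel's lemma.
\end{itemize}
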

Now we state the main result of this paper.
	\begin{theorem}
	\label{Main}
		Suppose that $Q$ is a pan-generic translation invariant indefinite quadratic form. Let $\mathcal{P}$ be the set of all primes. Let $\mathcal{A}\subset\mathcal{P}\cap\{1,\dots,X\}$.  If there exist no pairwise distinct primes $p_{1},\dots,p_{8}\in \mathcal{A}$ such that $Q(p_{1},\dots,p_{8})=0$, then 
\[\frac{|\mathcal{A}|}{X/\log X}\ll_{Q}(\log\log X)^{-c}\] 
		for some absolute constant $c>0$.
	\end{theorem}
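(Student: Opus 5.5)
We follow Green's transference strategy for Roth's theorem in the primes, adapted to the form $Q$. Corollary \ref{extensioncorollary} supplies the pseudorandom model count; a density increment argument then handles general dense sets.

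\textbf{Setup and $W$-trick.} Let $\delta=|\mathcal{A}|/(X/\log X)$ and suppose $\delta\geqslant(\log\log X)^{-c}$. Choose $W=\prod_{p\leqslant w}p$ with $w\asymp \log\log X$, so that $W\leqslant\log X$. By pigeonhole, find a residue $b$ coprime to $W$ and a subinterval $I=(Y_0,Y_0+Y]\subset[X/2,X]$ with $Y\asymp X$ such that $\mathcal{A}$ has relative density $\gg\delta$ in the primes $\equiv b\pmod W$ in $I$. Write $x=Wn+b$. Because $Q$ is translation invariant and homogeneous, $Q(W\mathbf{n}+b\mathbf{1})=W^2Q(\mathbf{n})$, so solutions in the $x$ variables correspond exactly to solutions of $Q(\mathbf{n})=0$ with $n_i$ in an interval $J$ of length $N\asymp Y/W$. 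Define
\[
f(n)=\tfrac{\phi(W)}{W}\log(Wn+b)\,1_{\mathcal{A}}(Wn+b),\qquad \nu(n)=\tfrac{\phi(W)}{W}\Lambda(Wn+b).
\]
Then $0\leqslant f\leqslant\nu$ and $\sum f\gg\delta N$.

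\textbf{Restriction and transference.} Express the count
\[
T(f_1,\dots,f_8)=\sum_{Q(\mathbf{n})=0}\prod_i f_i(n_i)
\]
via the circle method, and establish two analytic inputs.
\begin{enumerate}[(i)]
\item A restriction estimate $\|\hat f\|_{p}\ll N^{1-1/p}$ for some $p<8$ in a suitable sense. Since $Q$ is non-diagonal, what is really needed is an $L^p$ bound for the $8$-linear operator. Following Green, I would bound $T$ by a product of Fourier norms of the $f_i$ using the $\SL_2$ representation structure underlying his argument.
\item Fourier pseudorandomness: $\|\hat\nu-\hat 1_J\|_\infty=o(N)$ after the $W$-trick. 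This follows from the classical Siegel--Walfisz estimates as in Green's Roth-in-primes paper.
\end{enumerate}
Via a dense model lemma (Green's Fourier truncation, or Green--Tao), decompose $f=g+h$ with $0\leqslant g\leqslant 1+o(1)$, $\sum g\gg \delta N$, and $\|\hat h\|_\infty$ small. Together with the restriction estimate, this gives $T(f,\dots,f)=T(g,\dots,g)+o(\delta^{8} N^6)$, where the comparison scale $N^6$ comes from Corollary \ref{extensioncorollary} after rescaling.

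\textbf{Dense integer count and conclusion.} For bounded $g$ of density $\gg\delta$, we need $T(g,\dots,g)\gg_Q \delta^{C}N^6$, a Varnavides-type statement for translation invariant rank $\geqslant 7$ forms. I would take this from Keil/Zhao's density results for non-diagonal translation invariant forms in dense integer sets, which give counts polynomial in $\delta$ (or $\exp(-\delta^{-C})$, still adequate) once $\delta\gg(\log N)^{-c'}$. Combining the two steps, the number of solutions with $n_i\in\mathcal{A}'$ is $\gg \delta^{C}N^6/\log^8X$ in the unweighted sense.

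Finally, remove the non-distinct solutions. Those with $n_i=n_j$ lie on a lower-dimensional subvariety. Setting $n_i=n_j$ leaves a quadratic form in 7 variables, which typically has $\ll N^{5+\epsilon}$ solutions, although degeneracy of the restricted form must be checked using \textbf{(PG)}. This is negligible compared with $\delta^C N^6$ when $\delta\geqslant(\log\log X)^{-c}$. The loss in $w$ from Fourier pseudorandomness is only $o(1)$ as $w\to\infty$ slowly, and this forces the $\log\log$ dependence.

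\textbf{Main obstacle.} The main obstacle is the restriction estimate (i) for the non-diagonal form: proving $T(f_1,\dots,f_8)$ is controlled by $\prod\|\hat f_i\|_{p}$ with $p<8$ uniformly over majorised $f_i$. I would first try to reuse Green's decomposition of the quadratic exponential sums via the $\Delta$-eigenvalue structure, which requires only distinct eigenvalues, i.e.\ \textbf{(PG)}. I would combine this with the pointwise bound $\nu\ll \log\log$ on a sieve majorant, as in Green's Roth-in-primes paper.
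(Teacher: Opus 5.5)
\textbf{The restriction estimate.} The step that carries the argument is your restriction estimate (i), and the route you propose for it fails. Green's quasirandomness argument needs more than distinct eigenvalues of $\Delta$: it needs them to avoid $\{0,-1\}$. The factor of $\Gamma_p$ attached to an eigenvalue $\theta$ is the group $M_\theta$ of Lemma \ref{baby case}, and this is $\SL_2(\F_p(\theta))$ only when $\theta\notin\{0,-1\}$. For $\theta=0$ or $-1$ it is the abelian group $U_p$ or the Borel group $R_p$. Both have nontrivial one-dimensional characters, so nothing in Green's argument rules out small subrepresentations of $\rho|_{\Gamma_p}$.

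This is not a corner case but exactly the situation of the theorem. Translation invariance gives $a\mathbf{1}=-\tfrac12 b\mathbf{1}$ and $c\mathbf{1}=-\tfrac12 b^T\mathbf{1}$. Hence $4b^{-1}ab^{-T}c\mathbf{1}=\mathbf{1}$, i.e.\ $\Delta\mathbf{1}=0$ for \emph{every} translation invariant $Q$, so Green's hypothesis never holds here. Eight variables is also the critical case: the individual bound of Lemma \ref{lemma52} only yields $\sum_q q^{-1}$ on the minor arcs. So without the $q^{-c_0}$ saving of Proposition \ref{proposition51} there is no minor-arc estimate at all.

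The missing idea is the paper's treatment of these degenerate factors:
\begin{enumerate}
\item Identify $\Phi(\tau\Gamma_p\tau^{-1})=\prod_i M_{\alpha_i}$ via Goursat (Lemmas \ref{Ribet2}, \ref{Ribet3}, \ref{gamma_p}).
\item Pass to the subgroup $\Theta_p$, which is trivial on the factors with $\alpha_i\in\{0,-1\}$ and contains $s(P)$ for $P\in X_p$ and $l(CN)$ for $N\in Y_p$.
\item Show that $\rho|_{\Theta_p}$ has no invariant vector. The modulation by $l(C\Delta(\Delta+I))$ together with dilations forces $\Supp f\subseteq\oplus_{i\in\mathcal I}W_i$, and a dilation $s(P_0)$ with $\det P_0$ a non-residue then kills $f$ there.
\item Conclude with Lemma \ref{sl2-facts}.
\end{enumerate}

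\textbf{The quantitative bound.} Even granting that estimate, the transference architecture cannot give $(\log\log X)^{-c}$.
\begin{enumerate}
\item With $W\leqslant\log X$, the $W$-tricked majorant is only about $1/w\gg 1/\log\log X$ Fourier-pseudorandom: $\hat\nu$ has size about $N/q$ at $a/q$ for the first prime $q>w$.
\item A Fourier-truncation or Green--Tao dense model with error $\epsilon$ needs pseudorandomness $\exp(-\epsilon^{-O(1)})$, because the Bohr set has rank $\epsilon^{-O(1)}$.
\item This $\epsilon$ must beat the dense-integer count, which a density-increment argument supplies only in the shape $\exp(-\delta^{-C})$.
\end{enumerate}
Chaining these gives an iterated-logarithm bound, as in Green's original Roth theorem in the primes. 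Even a count polynomial in $\delta$ would only give $(\log\log\log X)^{-c}$. Your remark that the $w$-loss ``forces the $\log\log$ dependence'' overlooks these exponential losses.

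The paper avoids transference. Proposition \ref{lemmadensityincrease} runs the density increment directly relative to the primes in progressions $c\bmod Wq$ on subintervals. It combines:
\begin{enumerate}
\item the lower bound of Corollary \ref{extensioncorollary};
\item the bound $X^{5+\varepsilon}$ for non-distinct solutions;
\item the log-free minor-arc estimate of Proposition \ref{propminor} with $M\asymp\delta^{-8}$, where log-freeness comes from Lemma \ref{lemmasieve} and Siegel--Walfisz in $B_q$;
\item Zhao's major-arc Lemma \ref{lemmadensityincmajor}.
\end{enumerate}
Each step costs only $\delta^{O(1)}$ in modulus and length. Over the $O(\delta^{-c_2})$ iterations the total loss is $\exp(O(\delta^{-c_2}\log(1/\delta)))$. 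This stays within $W\leqslant\log X$ and $Y>X/\log X$ exactly when $\delta\geqslant(\log\log X)^{-c}$.

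Separately, your appeal to Keil/Zhao for an eight-variable dense-integer count would need checking, since bounded weights face the same critical minor arcs.
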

In order to prove Theorem \ref{Main}, we need to establish a restriction estimate over minor arcs. 
To introduce our restriction estimate over minor arcs, we first give some notations. We define
	\begin{equation*}
		\Lambda_{b,W;I}(x)=\begin{cases}
			\Lambda(Wx+b) & \text{if }Wx+b\in I,\\
			0& \text{otherwise}.
		\end{cases}
	\end{equation*}
	For any $i\geqslant1$, let $\lambda_{i}:[X]\to\mathbb{C}$ be a sequence satisfying
	\begin{equation}\label{deflambda-i}
		|\lambda_{i}(x)|\leqslant\Lambda_{b,W;I}(x)
	\end{equation}
	for all $x\in [X]$.  We assume for any $i\geqslant 1$ that there exists $0<\delta\leqslant 2$ such that
	\begin{equation*}
		\sum\limits_{x}|\lambda_{i}(x)|\leqslant\frac{\delta Y}{\phi(W)}.
	\end{equation*}
	Now we consider the general exponential sum 
\begin{equation}\label{defineStheta}
		S(\theta):=S(\theta;\lambda_{1},\dots,\lambda_{8})=\sum\limits_{x_{1},\dots,x_{8}} \lambda_{1}(x_{1})\cdots\lambda_{8}(x_{8})e(\theta Q(x_{1},\dots,x_{8})).
	\end{equation}

Let $M$ be a positive number to be fixed, and set
	\begin{equation}\label{defineM'}
		M'= M^{10^{4}c_{0}^{-1}}
	\end{equation}
	for some positive constant $c_{0}$ which is exactly the exponent appearing in Proposition \ref{proposition51}. 
	Set
	\begin{equation*}
		K=8\max\limits_{ij} |b_{ij}|,
	\end{equation*}
	where the $b_{ij}$ are the entries of $b$. Then $K$ depends only on $Q$. For each $q\in\mathbb{N}$ and $r\in(\mathbb{Z}/q\mathbb{Z})^{*}$, denote
	\begin{equation*}
		I_{r,q}:=\left\{\theta\in\mathbb{T}:|\theta-\frac{r}{q}|\leqslant \frac{Mq^{c_{1}}}{(Y/W)^{2}}\right\}
	\end{equation*} and
	\begin{equation*}
		\tilde{I}_{r,q}:=\left\{\theta\in\mathbb{T}:|\theta-\frac{r}{q}|\leqslant \frac{1}{Kq(Y/W)}\right\},
	\end{equation*}
	where 
\begin{align}\label{definec1}c_{1}=\frac{c_{0}}{10000}.\end{align} Now we define the \textit{major arcs}
	\begin{equation}\label{eq:definefrakM}
		\mathfrak{M}(M'):=\bigcup\limits_{\substack{1\leqslant q\leqslant M'\\ r\in(\mathbb{Z}/q\mathbb{Z})^{*}}} I_{r,q} 
	\end{equation}
	and set
	\begin{equation*}
		\mathfrak{m}_{1}(M'):=\bigcup\limits_{\substack{1\leqslant q\leqslant KY/W\\ r\in(\mathbb{Z}/q\mathbb{Z})^{*}}}  (\tilde{I}_{r,q}\backslash I_{r,q}),\quad\mathfrak{m}_{2}(M'):=\bigcup\limits_{\substack{ M'< q\leqslant KY/W\\ r\in(\mathbb{Z}/q\mathbb{Z})^{*}}} I_{r,q}.
	\end{equation*}
	We define $\mathfrak{m}(M')=\mathfrak{m}_{1}(M')\cup\mathfrak{m}_{2}(M')$ to be the \textit{minor arcs}. Note that $I_{r,q}$ in \eqref{eq:definefrakM} are pairwise disjoint when $2M(M')^{2+c_1}<(Y/W)^2$. By Dirichlet’s theorem on diophantine approximation,
\begin{equation*}
		\mathbb{T}=\bigcup\limits_{\substack{1\leqslant q\leqslant KY/W\\ r\in(\mathbb{Z}/q\mathbb{Z})^{*}}}\tilde{I}_{r,q},
	\end{equation*} 
thus we do have $\mathbb{T}=\mathfrak{M}(M')\cup\mathfrak{m}(M')=\mathfrak{M}(M')\cup\mathfrak{m}_{1}(M')\cup\mathfrak{m}_{2}(M')$.

	\begin{proposition}
	  [Restriction estimate on $\mathfrak{m}$]\label{propminor}
		Suppose that $Q$ is a pan-generic quadratic form. Suppose that $M\leqslant \log^{C}X$ for some $C\geqslant10$. Then we have
		\begin{equation*}
			\int_{\mathfrak{m}(M')}|S(\theta)|d\theta \ll_{Q} \frac{W^{2}Y^{6}}{\phi(W)^{8}}(M')^{-10^{-4}c_{0}}.
		\end{equation*}
	\end{proposition}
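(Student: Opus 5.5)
Following Green's strategy, I would reduce the restriction estimate to a pointwise bound for $S(\theta)$ on the minor arcs together with a measure bound for those arcs. The pointwise input is Proposition \ref{proposition51}, which I take to be the analogue of Green's main exponential sum estimate. It should say that if $|S(\theta)|\geqslant \eta\, W^2Y^8\phi(W)^{-8}$ in suitable normalisation, then $\theta$ lies within $\eta^{-O(1/c_0)}(Y/W)^{-2}$ of a rational $r/q$ with $q\ll \eta^{-O(1/c_0)}$. Green obtains this by viewing $Q$ through the $\mathrm{Sp}_4$/$\mathrm{SL}_2$-type representation structure, applying Cauchy--Schwarz in the $y$ variables, and using the pan-generic condition (distinct eigenvalues of $\Delta$) to control the resulting bilinear forms. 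The restriction estimate is then essentially a dyadic decomposition in the size of $|S(\theta)|$.

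\textbf{Step 1.} I normalise by setting $N=Y/W$. The trivial bound is $|S(\theta)|\ll \prod_i\sum|\lambda_i|\ll (Y/\phi(W))^8$. The target bound $W^2Y^6\phi(W)^{-8}$ equals $(Y/\phi(W))^8\cdot N^{-2}$, and $N^{-2}$ is the natural measure scale of a major arc. For each dyadic $\eta=2^{-j}$ I consider the level set $E_\eta=\{\theta\in\mathfrak{m}(M'):|S(\theta)|\in(\eta,2\eta](Y/\phi(W))^8\}$. Proposition \ref{proposition51} gives $E_\eta\subset\bigcup_{q\leqslant \eta^{-C/c_0}} \{|\theta-r/q|\leqslant \eta^{-C/c_0}N^{-2}\}$. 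This union has measure $\ll \eta^{-3C/c_0}N^{-2}$, so the contribution of $E_\eta$ is $\ll \eta^{1-3C/c_0}\cdot W^2Y^6\phi(W)^{-8}$. Since this exponent is negative, the decay must come instead from the minor-arc condition, as follows.

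\textbf{Step 2.} I use the minor-arc condition to force $\eta$ to be small. If $\theta\in\mathfrak{m}_2(M')$, then $\theta$ is close to $r/q$ with $q>M'$. If in addition $\theta\in E_\eta$ with $\eta^{-C/c_0}<M'$, Proposition \ref{proposition51} would place $\theta$ near a second fraction $r'/q'$ with small $q'$. Two distinct fractions satisfy $|r/q-r'/q'|\geqslant 1/(qq')$, which contradicts both approximations since $q\leqslant KN$; the two fractions must therefore coincide, and then $q=q'\leqslant M'$, again a contradiction. If $\theta\in\mathfrak{m}_1(M')$, then $|\theta-r/q|>Mq^{c_1}N^{-2}$, and the same comparison shows either $q\geqslant M'$ or the distance exceeds what Proposition \ref{proposition51} permits, provided $\eta^{-C/c_0}\leqslant M^{c'}$. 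Hence $E_\eta\neq\emptyset$ only when $\eta\leqslant (M')^{-c_0/C'}$. Combined with Step 1, this still leaves the negative-exponent issue.

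\textbf{Step 3, the main obstacle.} To beat the measure loss I need an $L^{p}$ restriction estimate for some $p<1$-type gain: more precisely, the bound $\int_{E_\eta}|S|\ll \eta^{1-\epsilon'}$ needs a large-sieve or $L^2$ input that measures the set more efficiently than the crude covering. My first attempt would be Parseval, $\int_{\mathbb{T}}|S(\theta)|^2\,d\theta$. It fails to give good bounds because $S$ is an 8-fold sum. I would therefore instead use the structure of Proposition \ref{proposition51} more strongly. I expect its conclusion to hold in a quantitative form in which $|S(\theta)|\ll (q+N^2|\theta-r/q|)^{-c_0}(Y/\phi(W))^8$, with additional $M$-losses absorbed by $M\leqslant\log^CX$. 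Granting this, I integrate directly over each arc $\tilde I_{r,q}$. The integral of $(1+N^2|\beta|/q^{-1})^{-c_0}$ over $|\beta|\leqslant 1/(KqN)$, times $\phi(q)$, summed over $q$, must converge. This requires a decay exponent larger than $1$ in the $\beta$-aspect and larger than $2$ in $q$, which $c_0$ alone does not provide. I would therefore interpolate: apply the pointwise bound to a small power $|S|^{\epsilon}$, and bound the remaining $|S|^{1-\epsilon}$ via Hölder against a fourth-moment estimate $\int|S|^{\ldots}$ obtained from the circle method count of solutions with multiplicity. The parameters $M'=M^{10^4/c_0}$ and $c_1=c_0/10^4$ are chosen so that this Hölder interpolation yields the saving $(M')^{-10^{-4}c_0}$. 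Making this moment bound sharp, with $W$-dependence exactly $W^2\phi(W)^{-8}$, is the step I expect to be hardest; it should follow from an upper-bound sieve majorant for $\Lambda_{b,W;I}$ together with the Hardy--Littlewood count for $Q$.
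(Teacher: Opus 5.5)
Your proposal has a genuine gap, and it lies exactly where the paper's argument does its work.

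\textbf{What Proposition \ref{proposition51} actually says.} It is not a pointwise inverse theorem for $S(\theta)$. It bounds complete restricted Gauss sums $T_{f_1,f_2}(r)$ modulo a squarefree $q$ with large prime factors, and the saving $q^{-c_0}$ holds only \emph{on average over} $r\in(\mathbb{Z}/q\mathbb{Z})^*$. For a single $r$, nothing better than Lemma \ref{lemma52} is true for general $f_1,f_2$. To see this, take $f_2$ a generic quadratic phase, so that $\rho(g^{(r)})f_2$ is again a flat quadratic phase, and set $f_1=\overline{\rho(g^{(r)})f_2}$. This is why Green needs the Weil representation and an average in $r$ at all.

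\textbf{Consequences for your steps.} The implication ``$|S(\theta)|$ large $\Rightarrow$ $\theta$ near $r'/q'$ with $q'$ small'', on which Steps 1--2 rest, is not available. Even granting it, your own count shows that the level-set covering loses. Step 3 does not repair this:
\begin{itemize}
\item A pointwise decay $(q+N^2|\beta|)^{-c_0}$ (with your $N=Y/W$) is neither available nor, as you note, sufficient.
\item Interpolating against a moment $\int_{\mathbb{T}}|S|^{p}$ with $p\geqslant 1$ necessarily loses a power of $N$. For example, the fourth moment, which the circle method can evaluate, gives only $\int_{\mathbb{T}}|S|\leqslant(\int_{\mathbb{T}}|S|^4)^{1/4}\approx (Y/\phi(W))^8N^{-1/2}$, against the target $(Y/\phi(W))^8N^{-2}$.
\item Moments with $p<1$ are harder than the statement itself.
\end{itemize}
Nor are $M'$ and $c_1$ tuned to any interpolation: $(M')^{-10^{-4}c_0}$ is simply $M^{-1}$.

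\textbf{The missing idea: keep the sum over $r$ inside the integral.} On $\mathfrak{m}_2(M')$ the paper proceeds as follows.
\begin{enumerate}
\item Bound $\int_{\mathfrak{m}_2}|S|$ by $\sum_{M'<q\leqslant KN}\int_{|\eta|\leqslant Mq^{c_1}N^{-2}}\sum_{r}|S(r/q+\eta)|\,d\eta$.
\item Remove $\eta$ by writing $e(\eta Q)$ through the smooth weight $\Omega_{\eta N^2}$ and Fourier inversion, at cost $(Mq^{c_1})^{16}$ by \eqref{boundW}.
\item Fold each block into $1$-bounded functions on $(\mathbb{Z}/q\mathbb{Z})^4$, normalised by $B_q$ from \eqref{defineBq}. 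This uses Siegel--Walfisz for $q\leqslant(\log X)^{A_0}$ and the trivial bound beyond.
\item Apply Proposition \ref{proposition51} with Lemmas \ref{lemma52} and \ref{lemma53} to get $\sum_r|S_{F_\xi}(r/q)|\ll B_q^8\min(q^7,q_1^{17}q_0^{7-c_0})$.
\end{enumerate}
Since $B_q^8q^7$ is about $(Y/\phi(W))^8q^{-1}$, the extra averaged saving $q^{-c_0}$, in the form \eqref{sumoverq}, is exactly what makes the sum over dyadic $q>M'$ converge. The result is $M^{17}(M')^{17c_1-c_0/100}\ll M^{-1}$ times $W^2Y^6\phi(W)^{-8}$.

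\textbf{The arc $\mathfrak{m}_1(M')$.} Here $|\eta|>Mq^{c_1}N^{-2}$, and no $q$-saving is needed at all.
\begin{itemize}
\item Lemma \ref{lemmastand} handles $q>(\log X)^{A_0}$ or $|\eta|>Mq^{c_1}(\log X)^{B_0}N^{-2}$; the $\log$ losses are absorbed by choosing $A_0,B_0$ large.
\item In the remaining range, the sieve bound of Lemma \ref{lemmasieve} has decay $q^{o(1)}\phi(q)^{-2}(1+N^2|\eta|)^{-2}$. This is integrable in $\eta$ beyond $Mq^{c_1}N^{-2}$ and summable in $q$, giving $M^{-1}$ directly.
\end{itemize}
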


\section{Proofs of Theorem \ref{extension} and Corollary \ref{extensioncorollary}}

We first introduce some definitions and present some estimates of the singular series and singular integrals. All these are basically quoted from \cite{Zhao2022} (and also standard). In fact, all results in this section hold for quadratic forms with rank at least five, and this will be assumed throughout. 

Define 
	\begin{equation*}
		S_{W,b}^{*}(q,r)=\sum_{\begin{subarray}{c}
				1\leqslant\mathbf{x}\leqslant qW \\ (\mathbf{x},q)=1\\ \mathbf{x}\equiv b(\text{mod } W)
		\end{subarray}} e_{q}(rQ(\mathbf{x})),
	\end{equation*}
and
	\begin{equation*}
		B_{W,b}^{*}(q)=\frac{1}{\phi(qW)^{8}}\sum_{r\in(\mathbb{Z}/q\mathbb{Z})^{*}}	S_{W,b}^{*}(q,r).
	\end{equation*}
Suppose that $(b,W)=1$, and we have
		\begin{equation*}
			B_{W,b}^{*}(q)\ll\frac{W^{5}}{\phi(W)^{8}}q^{-\frac{3}{2}+\varepsilon}.
		\end{equation*}
Then we can define the singular series 
	\begin{equation}\label{definesingularseries}
		\mathfrak{S}^{*}_{Q}(W,b)=\sum_{q=1}^{\infty}B_{W,b}^{*}(q).
	\end{equation}

	Now we turn to the singular integrals, which are well studied in the asymptotic formula for the integer solutions to $Q(\mathbf{x})=0$. One may refer to \cite[Section 3]{Keil} and \cite[Chapter 8]{Davenport} for more details. Define
	\begin{equation*}
		\mathcal{K}(\eta;I)=\int_{I^{8}} e(\eta Q(\mathbf{x}))d\mathbf{x}.
	\end{equation*}
	Subject to the condition $\mathrm{rank}(Q)\geqslant 5$, we have
	\begin{equation*}
		\mathcal{K}(\eta;I)\ll Y^{8}(1+Y^{2}|\eta|)^{-2}.
	\end{equation*}
	We introduce the singular integral
	\begin{equation}\label{definesingularint}
		\mathfrak{K}_{Q}(I)=\int_{-\infty}^{\infty}\mathcal{K}(\eta;I)d\eta
	\end{equation}
	and
	\begin{equation}\label{definesingularintinv}
		\mathfrak{I}_Q=\int_{-\infty}^{\infty}\left(\int_{(0,1]^{8}} e(\eta Q(\mathbf{x}))d\mathbf{x}\right)d\eta.
	\end{equation}
When $Q$ is translation invariant, we are able to explain more on the singular series and singular integral.
Let \begin{equation*}
		S(q,r)=\sum_{1\leqslant\mathbf{x}\leqslant q}
		e_{q}(rQ(\mathbf{x})), \qquad
		B(q)=\frac{1}{q^{8}}\sum_{r\in(\mathbb{Z}/q\mathbb{Z})^{*}}	S(q,r).
	\end{equation*}
	For each prime $p$, we define the local densities
	\begin{equation*}
		\sigma^{*}_{p}=\sum_{k=0}^{\infty}B^{*}_{1,0}(p^{k}), \qquad
		\sigma_{p}=\sum_{k=0}^{\infty}B(p^{k}),
	\end{equation*}
	and their product
	\begin{equation}\label{definesingularseriesinv}
		\mathfrak{S}_{Q}(W)=\left(\prod_{p|W}\sigma_{p}\right)\left(\prod_{p\nmid W}\sigma^{*}_{p}\right).
	\end{equation}
	\begin{lemma}[Lemma 2.7 \cite{Zhao2022}]\label{lemma31}
		Suppose that $Q$ is translation invariant. Then we have
		\begin{equation*}
			\mathfrak{S}^{*}_{Q}(W,b)=\frac{W^{2}}{\phi(W)^{8}}	\mathfrak{S}_{Q}(W).
		\end{equation*}
		Moreover, there exists some positive constant $C_Q$ independent of $W$ such that
		\begin{equation*}
			\mathfrak{S}_{Q}(W)> C_Q.
		\end{equation*}
     \end{lemma}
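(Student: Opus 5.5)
We prove the identity for $\mathfrak{S}^{*}_{Q}(W,b)$ first, term by term in $q$, and then the lower bound from the resulting Euler product.

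\textbf{Step 1 (factorisation of $B^*_{W,b}(q)$).} Write $q=q_1q_2$ with $q_1$ composed of primes dividing $W$ and $(q_2,W)=1$. By the Chinese remainder theorem, $S^*_{W,b}(q,r)$ factors into a part modulo $q_1W$ and a part modulo $q_2$, and the sum over $r$ factors accordingly, so $B^*_{W,b}$ is multiplicative in the same sense.

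For the part coprime to $W$ we use translation invariance. Since $(b,W)=1$, the condition $\mathbf{x}\equiv b \pmod W$ together with CRT lets the $q_2$-component range over all $\mathbf{x}$ mod $q_2$ with $(x_i,q_2)=1$. Hence this factor is $B^*_{1,0}(q_2)$, up to normalising factors $\phi(W)$ that we track.

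For $q_1$ with $p\mid q_1\Rightarrow p\mid W$, every $\mathbf{x}\equiv b\pmod W$ is automatically coprime to $q_1$. Write $\mathbf{x}=b\mathbf{1}+W\mathbf{y}$. Translation invariance gives $Q(b\mathbf{1}+W\mathbf{y})=W^2Q(\mathbf{y})$, so
\[
\sum_{r} S^*_{W,b}(q_1,r)=\sum_{r\in(\mathbb{Z}/q_1\mathbb{Z})^*}\ \sum_{\mathbf{y}\bmod q_1} e_{q_1}(rW^2Q(\mathbf{y})).
\]
Reducing $rW^2/q_1$ to lowest terms $r'/q'$ with $q'=q_1/(q_1,W^2)$, the sum over $\mathbf{y}$ becomes $(q_1/q')^8S(q',r')$. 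Summing over $q_1$ then reorganises $\sum_{q_1}B^*$ into $\prod_{p\mid W}\sum_k B(p^k)=\prod_{p\mid W}\sigma_p$. The factor $W^2/\phi(W)^8$ comes from the normalisation $\phi(qW)^{-8}$, and the $W^2$ reflects that each residue $r'$ mod $q'$ is hit by $(q_1,W^2)$-type multiplicities, or equivalently from a change of variables in counting solutions of $Q\equiv 0$ mod $p^k$ with $x\equiv b$.

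This bookkeeping, and making sure the power of $W$ comes out as exactly $W^2$, is the main obstacle. The cleanest route is to bypass the exponential sums: identify $\sum_{k\le K}B^*(p^k)$ with the normalised count of solutions to $Q(\mathbf{x})\equiv0 \pmod{p^K}$ subject to the local constraints, and then use $\mathbf{x}=b\mathbf{1}+W\mathbf{y}$ at the level of counts.

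\textbf{Step 2 (lower bound).} Absolute convergence of the series is ensured by $B^*_{W,b}(q)\ll W^5\phi(W)^{-8}q^{-3/2+\varepsilon}$, and the analogous bounds $B(p^k),B^*_{1,0}(p^k)\ll p^{-k(3/2-\varepsilon)}$ hold since the rank is at least $5$. Hence $\sigma_p,\sigma_p^*=1+O(p^{-3/2+\varepsilon})$, and for $p>P_0(Q)$ each factor is at least $1-Cp^{-3/2+\varepsilon}$. The product over all large $p$ is therefore bounded below by an absolute positive constant.

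For the finitely many small $p$, each of $\sigma_p$ and $\sigma^*_p$ equals $\lim_K p^{-7K}\#\{\mathbf{x}\bmod p^K: Q(\mathbf{x})\equiv0\}$, with coprimality imposed in the starred case. Positivity follows by Hensel lifting from a nonsingular $p$-adic zero. For $\sigma_p$ we take the point $\mathbf{1}$ adjusted by a nonsingular zero, which exists since $Q$ is indefinite of rank $\ge5$. For $\sigma_p^*$ we use translation invariance, starting from $\mathbf{x}=\mathbf{1}+p\mathbf{z}$ with $\mathbf{z}$ a nonsingular zero of $Q$: all such $\mathbf{x}$ are units, and $Q(\mathbf{1}+p\mathbf{z})=p^2Q(\mathbf{z})$ lifts.

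Taking $C_Q$ as the minimum over the $2^{\#\{p\le P_0\}}$ choices (whether $p\mid W$ or not) times the tail bound gives a constant independent of $W$.
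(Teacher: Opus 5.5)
The paper gives no proof of this lemma; it imports it from Zhao's Lemma~2.7. Your outline is the standard self-contained argument, and it is correct. Its advantage over the bare citation is that it shows exactly where the hypotheses enter. Translation invariance is used only through $Q(b\mathbf{1}+W\mathbf{y})=W^2Q(\mathbf{y})$, which removes the dependence on $b$ and produces the factor $W^2$, and through the unit zero $\mathbf{1}+p\mathbf{z}$. That zero is nonsingular because its gradient is $p\nabla Q(\mathbf{z})\neq 0$, so Hensel applies. Everything else needs only $\mathrm{rank}\,Q\geqslant 5$, which matches the paper's remark at the start of Section~2.

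The step you flagged as the main obstacle closes quickly along the counting route you suggest. Take $N=\prod_{p\mid W}p^{K}$ with $K$ large enough that $W^2\mid N$. Use $\phi(q_1W)=q_1\phi(W)$ and the orthogonality relation $\sum_{q_1\mid N}\sum_{r\in(\mathbb{Z}/q_1\mathbb{Z})^*}e_{q_1}(rm)=N\cdot 1_{N\mid m}$. Then
\[
\sum_{q_1\mid N}B^*_{W,b}(q_1)=\frac{\#\{\mathbf{y}\bmod N:\ N\mid W^2Q(\mathbf{y})\}}{\phi(W)^8N^{7}}
=\frac{W^{16}\,\#\{\mathbf{y}\bmod N/W^2:\ Q(\mathbf{y})\equiv 0\pmod{N/W^2}\}}{\phi(W)^8N^{7}}
=\frac{W^{2}}{\phi(W)^{8}}\sum_{q\mid N/W^2}B(q).
\]
The last step uses $\sum_{q\mid M}B(q)=M^{-7}\#\{\mathbf{y}\bmod M:\ Q(\mathbf{y})\equiv0\pmod{M}\}$. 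Letting $K\to\infty$ gives $\frac{W^2}{\phi(W)^8}\prod_{p\mid W}\sigma_p$. So the exponent of $W$ is $16-14=2$: eight coordinates each lift in $W^2$ ways, against the renormalisation of $N^{-7}$ to $(N/W^2)^{-7}$.

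The same count works in your exponential-sum version. For $p^w\,\|\,W$, the fibres of $r\mapsto r'$ have size $\phi(p^k)/\phi(p^{k-2w})=p^{2w}$ when $k>2w$. The terms with $k\leqslant 2w$, where $q'=1$, contribute $\sum_{k\leqslant 2w}\phi(p^k)=p^{2w}$.

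A few minor slips to fix:
\begin{itemize}
\item The $q_2$-factor is pure CRT using $(q_2,W)=1$; it needs no translation invariance. The condition $(b,W)=1$ is what makes $(\mathbf{x},q_1)=1$ automatic.
\item Because of the $\phi(p^k)^{-8}$ normalisation, $\sigma_p^*=(1-1/p)^{-8}\lim_K p^{-7K}\#\{\cdots\}$. This is harmless for positivity.
\item The nonsingular $p$-adic zero exists because every form of rank $\geqslant5$ over $\mathbb{Q}_p$ is isotropic; indefiniteness plays no role there. For $\sigma_p$ any such zero suffices, with no need to adjust $\mathbf{1}$.
\item The lower bound on the tail product depends on $Q$; it is not an absolute constant.
\end{itemize}
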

We also have the following.
	\begin{lemma}[Lemma 2.8 \cite{Zhao2022}]\label{lemma32}
		Suppose that $Q$ is translation invariant. Then we have
		\begin{equation*}
			\mathfrak{K}_{Q}(I)=Y^{6}\mathfrak{I}_{Q}.
		\end{equation*}
		Moreover, if $Q$ is indefinite then $\mathfrak{I}_{Q}>0$.
	\end{lemma}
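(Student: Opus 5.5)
We need to prove Lemma 3.2 (Lemma 2.8 of Zhao): for translation invariant $Q$, $\mathfrak{K}_Q(I)=Y^6\mathfrak{I}_Q$, and $\mathfrak{I}_Q>0$ when $Q$ is indefinite.

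The plan is a change of variables in $\mathcal{K}(\eta;I)$ followed by a rescaling of $\eta$. Write $\mathbf{x}=Y_0\mathbf{1}+Y\mathbf{u}$ with $\mathbf{u}\in(0,1]^8$; the Jacobian is $Y^8$. Translation invariance means the symmetric matrix $A$ of $Q$ satisfies $A\mathbf{1}=\mathbf{0}$. Expanding
\[Q(Y_0\mathbf{1}+Y\mathbf{u})=Y_0^2\,\mathbf{1}^TA\mathbf{1}+2Y_0Y\,\mathbf{1}^TA\mathbf{u}+Y^2Q(\mathbf{u}),\]
the first two terms vanish, so $Q(Y_0\mathbf{1}+Y\mathbf{u})=Y^2Q(\mathbf{u})$. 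Hence
\[\mathcal{K}(\eta;I)=Y^8\int_{(0,1]^8}e(\eta Y^2Q(\mathbf{u}))\,d\mathbf{u}=Y^8\,\mathcal{K}(\eta Y^2;(0,1]).\]
Integrating over $\eta\in\mathbb{R}$ and substituting $\eta'=Y^2\eta$ gives $\mathfrak{K}_Q(I)=Y^{6}\mathfrak{I}_Q$. Both integrals converge absolutely by the stated bound $\mathcal{K}(\eta;I)\ll Y^8(1+Y^2|\eta|)^{-2}$, which is valid since $\mathrm{rank}(Q)\ge5$. So all manipulations are justified.

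For positivity, use the standard Fourier-analytic interpretation of the singular integral as a surface density (Davenport, Chapter 8). Let $w_\epsilon(t)=\epsilon^{-1}\max(0,1-|t|/\epsilon)$, whose Fourier transform $\epsilon^{-1}\hat{}(\cdot)=\big(\frac{\sin\pi\epsilon\eta}{\pi\epsilon\eta}\big)^2\ge0$ tends to $1$ as $\epsilon\to0$. By Fubini, which is allowed thanks to the $(1+|\eta|)^{-2}$ decay,
\[\int_{\mathbb{R}}\Big(\frac{\sin\pi\epsilon\eta}{\pi\epsilon\eta}\Big)^2\int_{(0,1]^8}e(\eta Q(\mathbf{u}))\,d\mathbf{u}\,d\eta=\int_{(0,1]^8}w_\epsilon(Q(\mathbf{u}))\,d\mathbf{u}.\]
Dominated convergence shows that the left side tends to $\mathfrak{I}_Q$ as $\epsilon\to0$. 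It therefore suffices to show that
\[\liminf_{\epsilon\to0}\epsilon^{-1}\,\mathrm{vol}\{\mathbf{u}\in(0,1]^8:|Q(\mathbf{u})|<\epsilon/2\}>0.\]

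The step I expect to require the most care is producing a nonsingular real zero of $Q$ in the open cube. Since $Q$ is indefinite, pick $\mathbf{v}$ with $Q(\mathbf{v})>0$ and $\mathbf{w}$ with $Q(\mathbf{w})<0$, and perturb them so that the segment between them avoids the cone $\{\nabla Q=0\}=\ker A\cap\{Q=0\}$; this cone has codimension at least $5$, so a generic perturbation works. By continuity there is a point $\mathbf{z}$ with $Q(\mathbf{z})=0$ and $\nabla Q(\mathbf{z})=2A\mathbf{z}\ne0$. Translation invariance now lets us move $\mathbf{z}$ into the cube: the point $\mathbf{z}'=\tfrac12\mathbf{1}+t\mathbf{z}$ satisfies $Q(\mathbf{z}')=t^2Q(\mathbf{z})=0$ and $\nabla Q(\mathbf{z}')=2tA\mathbf{z}\neq0$, and it lies in $(0,1)^8$ for small $t>0$. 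Near $\mathbf{z}'$ the implicit function theorem (coarea formula) shows that the set $\{|Q|<\epsilon/2\}$ has volume $\gg\epsilon$, with constant roughly the surface measure of a patch divided by $|\nabla Q|$. Hence $\mathfrak{I}_Q>0$.
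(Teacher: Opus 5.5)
Your argument is correct and is the standard one behind Lemma 2.8 of \cite{Zhao2022}, which the paper quotes without proof: the substitution $\mathbf{x}=Y_0\mathbf{1}+Y\mathbf{u}$ together with $A\mathbf{1}=\mathbf{0}$ gives the factor $Y^{6}$, and positivity follows by reading $\mathfrak{I}_Q$ as a real density and placing a nonsingular zero $\tfrac12\mathbf{1}+t\mathbf{z}$ inside the open cube. Your perturbation step is unnecessary: $s\mapsto Q((1-s)\mathbf{v}+s\mathbf{w})$ is a polynomial of degree at most $2$ that changes sign on $(0,1)$, so its unique root there is simple, and this already gives $\nabla Q(\mathbf{z})\cdot(\mathbf{w}-\mathbf{v})\neq 0$.
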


Define
\begin{align*}
		T(\theta)=\sum_{x_{1},\ldots,\,x_{8}\in I}\xi(x_1)\cdots\xi(x_8) e(\theta Q(x_{1},\dots,x_{8})),
	\end{align*}
where the weight satisfies $|\xi(x)|\leqslant \Lambda(x)$. Taking $W=1,b=0$ in Proposition \ref{propminor}, we have
		\begin{equation}\label{boundTminor}
			\int_{\mathfrak{m}(M')}|T(\theta)|d\theta \ll Y^{6}M^{-1}.
		\end{equation}
We choose $$\xi(x)=\chi_{b,W;I}(x)\Lambda(x),$$
 where $\chi_{b,W;I}(x)$ is the characteristic function for $\{x\in I:\ x\equiv b\pmod{W}\}$. Then we have
	\begin{equation*}
			\nu_{Q}(b,W;I)=\int_0^1T(\theta)d\theta.
		\end{equation*}

	\begin{lemma}
	  	  [Major arcs]\label{major}
		Suppose that $Q$ is an indefinite quadratic form in 8 variables, with $\mathrm{rank}(Q)\geqslant 5$. If $M=\log^{B}X$ for some $B\geqslant A+10$, then we have
		\begin{equation*}
			\int_{\mathfrak{M}(M')}T(\theta)d\theta=\mathfrak{S}^{*}_{Q}(W,b)\mathfrak{K}_{Q}(I)+O_{A,Q}(Y^{6}W^{2}\phi(W)^{-8}\log^{-A}X).
		\end{equation*}
	\end{lemma}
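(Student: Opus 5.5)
On each major arc $I_{r,q}$ with $q\leqslant M'$ and $r\in(\mathbb{Z}/q\mathbb{Z})^*$, I would write $\theta=r/q+\eta$ and approximate $T(r/q+\eta)$ by a product of a complete exponential sum and the singular integral $\mathcal{K}(\eta;I)$. The argument follows the standard major-arc analysis, along the lines of \cite[Section 3]{Zhao2022}.

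\textbf{Step 1: splitting into residue classes.} Split each $x_j$ into residue classes modulo $qW$. Since $x_j\equiv b\pmod W$ and $x_j$ is prime in $I$ (the contribution of prime powers and of primes dividing $qW$ is negligible, being $O(Y^{7.5})$ type), only classes $\mathbf{x}\equiv\mathbf{h}\pmod{qW}$ with $(\mathbf{h},q)=1$ and $\mathbf{h}\equiv b\pmod W$ contribute. Then
\[
T(r/q+\eta)=\sum_{\mathbf{h}} e_q(rQ(\mathbf{h}))\sum_{\substack{\mathbf{x}\in I^8\\ \mathbf{x}\equiv\mathbf{h}\,(qW)}}\Lambda(x_1)\cdots\Lambda(x_8)\,e(\eta Q(\mathbf{x})).
\]
For the inner sum I would use the Siegel--Walfisz theorem with modulus $qW\leqslant (\log X)^{B'}$, together with partial summation in each variable. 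This replaces $\prod\Lambda(x_j)\mathbf{1}_{x_j\equiv h_j}$ by $\phi(qW)^{-8}$ times the integral $\mathcal{K}(\eta;I)$. The error is $O\big(Y^8\phi(qW)^{-8}(1+Y^2|\eta|)\exp(-c\sqrt{\log X})\big)$, and the factor $1+Y^2|\eta|\ll Mq^{c_1}W^2$ is harmless. Summing over $\mathbf{h}$ gives
\[
T(r/q+\eta)=\phi(qW)^{-8}S^*_{W,b}(q,r)\,\mathcal{K}(\eta;I)+O\big(Y^8 (\log X)^{O(B)}e^{-c\sqrt{\log X}}\big).
\]

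\textbf{Step 2: integrating over the major arcs.} Integrate over $|\eta|\leqslant Mq^{c_1}W^2/Y^2$ and sum over $r$ and over $q\leqslant M'$. The error terms total
\[
\ll (M')^{2+c_1}MW^2Y^{-2}\,Y^8(\log X)^{O(B)}e^{-c\sqrt{\log X}},
\]
which is far below $Y^6W^2\phi(W)^{-8}\log^{-A}X$. This uses $Y>X/\log X$, $W\leqslant\log X$, and the fact that $M'$ is a power of $\log X$. The main term is
\[
\sum_{q\leqslant M'}B^*_{W,b}(q)\int_{|\eta|\leqslant Mq^{c_1}W^2/Y^2}\mathcal{K}(\eta;I)\,d\eta .
\]

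\textbf{Step 3: completing the integral and the series.} First complete the $\eta$-integral using $\mathcal{K}(\eta;I)\ll Y^8(1+Y^2|\eta|)^{-2}$. The tail is $\ll Y^6(Mq^{c_1}W^2)^{-1}$, and after multiplying by $B^*_{W,b}(q)\ll W^5\phi(W)^{-8}q^{-3/2+\varepsilon}$ and summing over $q$, it contributes $\ll Y^6W^3\phi(W)^{-8}M^{-1}$. Then complete the singular series. The tail $\sum_{q>M'}|B^*_{W,b}(q)|\mathfrak{K}_Q(I)$ is $\ll W^5\phi(W)^{-8}(M')^{-1/2+\varepsilon}Y^6$, using $\mathfrak{K}_Q(I)\ll Y^6$.

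\textbf{Main obstacle.} The delicate point is the bookkeeping of powers of $W$. The stated bound for $B^*_{W,b}$ carries a factor $W^5$ rather than $W^2$, so the tails are of size $W^5\phi(W)^{-8}$ times a negative power of $M$. Since $W\leqslant\log X$ and $M=\log^B X$ with $B\geqslant A+10$, these are still $\ll Y^6W^2\phi(W)^{-8}\log^{-A}X$. The same slack handles the factor $W^3/M$ from the $\eta$-completion. Checking uniformly in $q\leqslant M'$ that Siegel--Walfisz applies, with modulus at most $W(\log X)^{10^4B/c_0}$, is the other point to verify; it holds because Siegel--Walfisz allows any fixed power of $\log X$.
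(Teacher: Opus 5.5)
Your proposal is correct and takes essentially the same route as the paper, whose proof simply defers to the standard major-arc treatment of (3.11) in \cite{Zhao2022}. That treatment is the argument you describe: split into classes mod $qW$, use Siegel--Walfisz with partial summation to obtain $\phi(qW)^{-8}S^*_{W,b}(q,r)\mathcal{K}(\eta;I)$, then complete the $\eta$-integral and the singular series using the bounds for $\mathcal{K}$ and $B^*_{W,b}$. One cosmetic point: $T$ is the $W=1$, $b=0$ instance, so the arcs should have width $Mq^{c_1}/Y^2$ rather than $Mq^{c_1}W^2/Y^2$; this costs at most an extra factor $W^2$ in your $\eta$-tail, which the slack from $W\leqslant \log X$ and $B\geqslant A+10$ easily absorbs.
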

\begin{proof}The proof can be carried out by completely following the proof of (3.11) in \cite{Zhao2022}.\end{proof}
\begin{proof}[Proofs of Theorem \ref{extension} and Corollary \ref{extensioncorollary}  assuming Proposition \ref{propminor}.] Take $M=\log^{B}X$ for some constant $B\geqslant A+10$. Since $W\leqslant \log X$, by \eqref{boundTminor} (which needs Proposition \ref{propminor}), 		
         \begin{equation}\label{boundTminorlog}
			\int_{\mathfrak{m}(M')}|T(\theta)|d\theta \ll _{A,Q} Y^{6}W^{2}\phi(W)^{-8}\log^{-A}X.
		\end{equation}
Now \eqref{boundTminorlog} and Lemma \ref{major} together complete the proof of Theorem \ref{extension}. 

Corollary \ref{extensioncorollary} follows from Theorem \ref{extension}, Lemma \ref{lemma31} and Lemma \ref{lemma32}. \end{proof}

\bigskip

\section{The contribution from integration over $\mathfrak{m}_1$}

\begin{lemma}\label{lemmastand}Let $\theta=\frac{r}{q}+\eta\in \tilde{I}_{r,q}$. Suppose that $F_1,F_2:\, [X]^4\rightarrow \mathbb{C}$ are $1$-bounded functions, namely $\|F_i\|_{\infty}\leqslant 1$ ($i=1,2$), and supported on the set $\{x:Wx+b\in I\}$. Then we have
\begin{align}\label{standard}\sum\limits_{x,\,y} F_{1}(x)F_{2}(y)e(\theta Q(x,y)) \ll X^6\log^2X+\frac{X^4\log^2 X}{\eta^2q^2}.\end{align}
\end{lemma}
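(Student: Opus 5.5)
The plan is the standard Weyl-differencing / Cauchy–Schwarz argument in the $y$ variable, which kills the dependence on $F_2$ and leaves a linear exponential sum in $y$ that can be bounded by the approximation $\theta=r/q+\eta$.

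\textbf{Step 1: Cauchy–Schwarz in $x$.} Write
\[
S=\sum_{x}F_1(x)e(\theta x^Tax)\sum_{y}F_2(y)e(\theta(x^Tby+y^Tcy)).
\]
Since $|F_1|\leqslant 1$ and $x$ ranges over a box of side at most $X$, we get
\[
|S|^2\leqslant X^4\sum_{x}\Bigl|\sum_y F_2(y)e(\theta(x^Tby+y^Tcy))\Bigr|^2 .
\]
Expanding the square gives
\[
|S|^2\leqslant X^4\sum_{y,y'}|F_2(y)F_2(y')|\,\Bigl|\sum_{x\in B}e(\theta x^Tb(y-y'))\Bigr|,
\]
where $B$ is the box of side $\leqslant X$ supporting $F_1$. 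This removes $F_1$ entirely.

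\textbf{Step 2: geometric series in $x$.} Put $h=y-y'$ and $u=b h\in\mathbb Z^4$ (up to transpose). The inner sum factors over the four coordinates and is
\[
\ll \prod_{i=1}^4\min\bigl(X,\|\theta u_i\|^{-1}\bigr).
\]
For fixed $h$ there are at most $X^4$ pairs $(y,y')$. Since $b$ is invertible, $h\mapsto bh$ is injective, and $|u_i|\leqslant KX/8$ with $K=8\max|b_{ij}|$. Hence
\[
|S|^2\ll X^8\prod_{i=1}^4\Bigl(\sum_{|u|\leqslant KX}\min\bigl(X,\|\theta u\|^{-1}\bigr)\Bigr).
\]
This bounds $|S|^2$ by $X^8 \Sigma^4$, where $\Sigma=\sum_{|u|\leqslant KX}\min(X,\|\theta u\|^{-1})$.

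\textbf{Step 3: bounding $\Sigma$ with $\theta=r/q+\eta$ (the main obstacle).} This is where care is needed; we want
\[
\Sigma\ll X\log X+\frac{\log X}{|\eta| q}.
\]
Split $u$ into blocks of $q$ consecutive integers. Within a block, $\theta u$ runs through the residues $r u/q$ perturbed by $\eta u$. The bound $|\eta|\leqslant 1/(Kq\,Y/W)$ should be used to control the drift: $|\eta|\,q$ times the number of blocks spreads the points well.

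An alternative cleaner route, which I would try first, is to sum over residue classes of $u$ modulo $q$. For the class $u\equiv0\pmod q$ we have $\|\theta u\|=\|\eta u\|$, and summing $\min(X,1/|\eta u|)$ over multiples of $q$ gives
\[
\ll X+\frac{\log X}{q|\eta|}.
\]
For the nonzero classes, $\|\theta u\|\gg 1/q$ provided $|\eta|\, KX\leqslant 1/(2q)$, which the arc $\tilde I_{r,q}$ essentially guarantees (adjusting constants via $K$ and the support $Wx+b\in I$, so that effectively $u\ll KY/W$). These contribute
\[
\ll (X/q+1)\,q\log q\ll X\log X .
\]

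\textbf{Step 4: combine.} Inserting the bound for $\Sigma$ gives
\[
|S|^2\ll X^8\Bigl(X\log X+\frac{\log X}{|\eta|q}\Bigr)^4 .
\]
This is too weak as it stands: it yields $X^6$ times extra factors, rather than the claimed shape. So I would instead apply the Cauchy–Schwarz step twice, once for each of $x$ and $y$ (i.e., only two coordinates of $\Sigma$ need to be estimated by the nontrivial bound, the others trivially), and balance. The target $X^6\log^2X+X^4\log^2X/(\eta q)^2$ corresponds to
\[
|S|^2\ll X^8\cdot X^{2}\cdot\Sigma^2 ,
\]
that is, using the nontrivial bound in two of the four coordinates and the trivial bound $\Sigma\ll X^2$ is not what we want. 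Precisely: using $\prod_i$ with two factors bounded trivially by $\sum_u\min(X,\cdot)\ll X^2$ is too lossy, so the right normalization must come from the support-size bookkeeping. I expect this bookkeeping, together with handling the edge range in Step 3, to be the main difficulty; the underlying mechanism is nonetheless the differencing and geometric-series bound above.
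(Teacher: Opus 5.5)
Your Steps 1--3 are the right argument and match what the paper uses (it only refers to Section 5 of Green's paper): Cauchy--Schwarz in $x$ to remove $F_1$, the geometric-series bound for the linear sum in $x$, injectivity of $h\mapsto bh$, and a one-dimensional bound for $\Sigma$. The problem is Step 4. There an arithmetic slip makes you reject a correct conclusion and replace it with something false. Step 2 gives $|S|^2\ll X^8\Sigma^4$, i.e. $|S|\ll X^4\Sigma^2$. Inserting $\Sigma\ll X\log X+\log X/(|\eta|q)$ yields
\[
|S|\ll X^4\Bigl(X\log X+\frac{\log X}{|\eta|q}\Bigr)^2\ll X^6\log^2X+\frac{X^4\log^2X}{\eta^2q^2},
\]
which is exactly the lemma, with no extra factors. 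The target does not ``correspond to $|S|^2\ll X^{10}\Sigma^2$''; that would give a second term $X^5\log X/(|\eta|q)$. It corresponds to $|S|^2\ll X^8\Sigma^4$, which you already had. So drop the second Cauchy--Schwarz and the ``balancing'' rather than pursue them.

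What does need to be written out is the bookkeeping in Step 3. This is exactly why $K=8\max|b_{ij}|$ appears in $\tilde I_{r,q}$. Because $F_1,F_2$ are supported on $\{x:Wx+b\in I\}$, a box of side $Y/W$, you have $|h_i|\leqslant Y/W$. Hence $|u_i|=|(bh)_i|\leqslant \frac K2\cdot\frac YW$, and so $|\eta u_i|\leqslant \frac1{2q}$ for $\theta\in\tilde I_{r,q}$. In $\Sigma$, now taken over $|u|\leqslant \frac K2\cdot\frac YW$, split by residue class mod $q$:
\begin{itemize}
\item For $q\nmid u$ you get $\|\theta u\|\geqslant\|ru/q\|-\frac1{2q}\geqslant\frac12\|ru/q\|$. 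These $u$ contribute $\ll(\frac{Y}{Wq}+1)\,q\log q\ll X\log X$, using $q\leqslant KY/W$.
\item For $u=qm$ you have $\|\theta u\|=|\eta|q|m|$ exactly, since $|\eta u|\leqslant\frac12$. These contribute $\ll X+\frac{\log X}{|\eta|q}$.
\end{itemize}
With this, your Steps 1--3 already form a complete proof. The block-splitting variant at the start of Step 3 is unnecessary.
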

\begin{proof}The proof follows from the argument in Section 5 of \cite{Green}.\end{proof}
We give some remarks on the estimate \eqref{standard}. Roughly speaking, the $\eta$-aspect is good, however the $q$-aspect is {\it bad}. Therefore, whenever $\eta$ is a little bit large, say $|\eta|>\frac{M}{X^2}$, one can deduce a nice mean value estimate with upper bound $O(M^{-1}X^6\log^3X)$. The  
hardest thing is to improve the $q$-aspect (subject to the condition $|\eta|\leqslant \frac{M}{X^2}$), and for this purpose Green \cite{Green} introduced the group representation to this problem. 

In order to prove Theorem \ref{Main}, the upper bound $O(M^{-1}X^6\log^3X)$ is insufficient since $M$ is possibly a constant. To obtain a $\log$-free upper bound, one may appeal to the sieve method. This has been explained in \cite{Zhao2022}, and we summarize the following conclusion.   
\begin{lemma}\label{lemmasieve}
Let $A_0,B_0$ be two constants. Let $\theta=\frac{r}{q}+\eta$ with 
$$(r,q)=1, \quad 1\leqslant q\leqslant (\log X)^{A_0},\ \quad |\eta|\leqslant \frac{q^{c_1}(\log X)^{B_0}}{X^2}.$$ Then we have 
\begin{align}\label{inequalitysieve}|S(\theta)|\ll \frac{Y^8q^{o(1)}}{\phi(W)^8\phi(q)^2}\Big(1+Y^2W^{-2}|\eta|\Big)^{-2}.\end{align}
\end{lemma}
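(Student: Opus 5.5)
We will bound $S(\theta)$ by majorizing each $\lambda_i$ with an enveloping sieve weight, then evaluating the resulting smooth sum by the circle method; in outline this follows \cite{Zhao2022}. Since $|\lambda_i(x)|\leqslant \Lambda_{b,W;I}(x)\ll \log X\cdot 1_{Wx+b \text{ prime}}$, we introduce a Selberg (or Goldston--Y{\i}ld{\i}r{\i}m type) upper-bound sieve weight
\[
\beta(x)=\frac{\phi(W)}{W}\log R\,\Big(\sum_{d\mid Wx+b,\ d\leqslant R}\mu(d)\,g(\log d/\log R)\Big)^2
\]
with level $R=X^{\kappa}$ for a small fixed $\kappa>0$. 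This weight satisfies $\Lambda_{b,W;I}(x)\ll \frac{W}{\phi(W)}\beta(x)$ up to the contribution of $Wx+b\leqslant R$, which is negligible. The main difficulty is that $S(\theta)$ is an exponential sum with coefficients $\lambda_i$ of arbitrary sign, so we cannot simply replace them by majorants. The remedy is to write $\lambda_i(x)=\Lambda_{b,W;I}(x)u_i(x)$ with $|u_i|\leqslant 1$ and treat the variables in two blocks $x$ and $y$, applying Cauchy--Schwarz to separate them. Concretely, we write $S(\theta)=\sum_{x}\lambda(x)\sum_y\lambda'(y)e(\theta Q(x,y))$, and use Cauchy--Schwarz in $x$, weighted by the majorant $\beta^{\otimes 4}$:
\[
|S(\theta)|^2\ll \Big(\sum_x \beta^{\otimes4}(x)\Big)\sum_{x}\beta^{\otimes4}(x)\Big|\sum_y\lambda'(y)e(\theta Q(x,y))\Big|^2.
\]
Expanding the square and swapping sums gives a sum over $y,y'$ of $\lambda'(y)\overline{\lambda'(y')}$ times a smooth-weighted sum in $x$ of $e(\theta(Q(x,y)-Q(x,y')))$. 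We do the same in $y$, with the $x$-sum now bounded trivially in the appropriate way. In the end everything is reduced to sums of the shape $\sum_{\mathbf{x}}\prod\beta(x_j)e(\theta\,\cdot)$, involving a quadratic form in $16$ variables of the shape $Q(x,y)-Q(x,y')-Q(x',y)+Q(x',y')$ (alternatively, we keep the $8$-variable form with sieve weights).

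Next we evaluate the sieve-weighted sum. Expanding the squares, $\beta(x)$ becomes a combination $\sum_{d_1,d_2\leqslant R}c_{d_1}c_{d_2}1_{[d_1,d_2]\mid Wx+b}$. For each choice of moduli we obtain a complete quadratic exponential sum over residue classes modulo $q[d,d']$, multiplied by a smooth integral in $\eta$. By Poisson summation, or by completing modulo $q\cdot\mathrm{lcm}(d)$, at $\theta=r/q+\eta$ with $q\leqslant(\log X)^{A_0}$ and $|\eta|\leqslant q^{c_1}(\log X)^{B_0}/X^2$, the main term factorises as a local factor at $q$ times the archimedean factor $\ll Y^8W^{-8}(1+Y^2W^{-2}|\eta|)^{-2}$. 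The latter uses the integral bound $\mathcal{K}(\eta;I)\ll Y^8(1+Y^2|\eta|)^{-2}$ recorded above for rank at least $5$. The error terms are $O(X^{8-\delta})$ because $R^{O(1)}q$ is tiny compared with $X$.

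The local factor at $q$ is a normalized Gauss-type sum $\phi(q)^{-8}\sum^{*}e_q(rQ(\mathbf{x}))$ over units, twisted by the sieve coefficients coprime to $q$. Using the fact that $Q$ has rank at least $7$ and the standard bound for quadratic Gauss sums over units, this is $\ll q^{-2+o(1)}\ll q^{o(1)}\phi(q)^{-2}$. The sieve sums over $d$ coprime to $qW$ contribute $\ll(\phi(W)/W)^{-1}$ per variable, which exactly cancels the normalisation, giving $Y^8\phi(W)^{-8}$.

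The main obstacle is the step where we pass from arbitrary signed $\lambda_i$ to sieve majorants while preserving the saving $\phi(q)^{-2}$. A majorant alone only controls $\sum|\lambda_i|$, not the oscillation. The plan is to avoid losing the $q$-saving by using the $x/y$ bilinear structure. For fixed $y$, the form $Q(x,y)$ is quadratic in $x$ with the nondegenerate piece $x^Tax$, so the inner $x$-sum against a sieve weight already exhibits the $q^{-2}$ cancellation, namely $q^{-1/2}$ per variable across the $4$ variables. The remaining sum over $y$ is then bounded using only $\sum|\lambda_i|\ll Y/\phi(W)$. If this approach gives only $q^{-2}$ up to $q^{o(1)}$ rather than $\phi(q)^{-2}$, that is harmless, since $\phi(q)\gg q^{1-o(1)}$.
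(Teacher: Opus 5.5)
\textbf{There is a genuine gap, and it is at the step you yourself flag as the main obstacle.}

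Your second and third paragraphs compute a local factor $\phi(q)^{-8}\sum^{*}e_q(rQ(\mathbf{x}))$ using $\mathrm{rank}(Q)\geqslant 7$, and an archimedean factor from $\mathcal{K}(\eta;I)$. That evaluates $\sum_{\mathbf{x}}\prod_j\beta(x_j)e(\theta Q(\mathbf{x}))$, which is not a majorant of $|S(\theta)|$. Replacing the signed $\lambda_j$ by $\beta$ inside an oscillating sum is exactly what you noted is illegitimate, so ``keeping the $8$-variable form with sieve weights'' is not available.

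The legitimate version of your plan is Cauchy--Schwarz in both blocks. It gives $|S(\theta)|^4\leqslant(\sum_x\beta^{\otimes 4})^{2}(\sum_y\beta^{\otimes 4})^{2}\,\mathcal{B}$, where $\mathcal{B}$ is the sieve-weighted sum of $e(\theta(x-x')^Tb(y-y'))$; all the $a$- and $c$-terms cancel. But $\mathcal{B}$ is genuinely of size $Y^{16}\phi(W)^{-16}q^{-4+o(1)}(1+Y^2W^{-2}|\eta|)^{-4}$, because the terms with $x\equiv x'\pmod{q}$ and $|x-x'|\ll W/(Y|\eta|)$ already contribute this much. So this route yields only $|S(\theta)|\ll Y^8\phi(W)^{-8}q^{-1+o(1)}(1+Y^2W^{-2}|\eta|)^{-1}$, which is half the required exponent in both the $q$- and the $\eta$-aspect. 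That is too weak for the $\mathfrak{r}_3$ estimate in Lemma \ref{m1}: the $q$-sum $\sum_{q\leqslant(\log X)^{A_0}}\phi(q)q^{-1}$ is a power of $\log X$, and the $\eta$-integral no longer produces $M^{-1}$.

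Your last paragraph does not close the gap. For fixed $y$ the $x$-sum is $\sum_x\lambda(x)e(\theta Q(x,y))$ with arbitrary coefficients, so there is no Gauss-sum cancellation from $x^Tax$ to be had. After Cauchy--Schwarz in $x$, which is what makes the $x$-weight a sieve weight, $x^Tax$ cancels identically. What remains is the \emph{linear} sum $F(y-y')=\sum_x\beta(x)e(\theta x^Tb(y-y'))$, so the rank of $Q$ plays no role.

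The paper cites a bound from \cite{Zhao2022} for any $2s$-variable form with $b_0$ invertible, saving $\phi(q)^{-s/2}$. That is exactly what a \emph{single} Cauchy--Schwarz followed by an $L^1$ count over $(y,y')$ produces; the details are as follows.
\begin{enumerate}
\item Bound $|F(h)|$ by roughly $\frac{Y^4}{\phi(W)^4}\prod_i\frac{(q,h_i)}{\phi(q)}$, times a rapidly decaying factor in $Y|\eta h_i|/W$. This comes from the Fourier expansion of a smoothed sieve majorant.
\item Sum $|\lambda'(y)||\lambda'(y')||F(b(y-y'))|$ with absolute values. Here $b$ invertible modulo $q$ is what matters.
\end{enumerate}
For step 2, $\sum_x|\lambda_i(x)|\ll Y/\phi(W)$ is useless: it does not stop $\lambda_i$ from concentrating in one residue class modulo $q$. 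You need the Brun--Titchmarsh/Siegel--Walfisz bound $\sum_{y\equiv u\pmod{q}}|\lambda_i(y)|\ll Y/(\phi(W)\phi(q))$, together with its analogue on intervals of length $\gg X(\log X)^{-O(1)}$. This is precisely why the lemma assumes $q\leqslant(\log X)^{A_0}$ and $|\eta|\leqslant q^{c_1}(\log X)^{B_0}X^{-2}$. Averaging $(q,h)/\phi(q)$ over $h$ modulo $q$ produces the $\tau(q)$ factors. One gets $|S|^2\ll Y^{16}\phi(W)^{-16}q^{o(1)}\phi(q)^{-4}(1+Y^2W^{-2}|\eta|)^{-4}$, and taking square roots gives the lemma.
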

\begin{proof} Let $f(x,y)$ be a quadratic form in $2s$ variables given by
$$f(x,y)=x^Ta_0x+x^Tb_0y+y^Tc_0y=(x^{T},y^{T})\begin{pmatrix}
			a_0 &b_0/2\\
			b_0^{T}/2 & c_0
		\end{pmatrix}\begin{pmatrix}
			x\\
			y
		\end{pmatrix},$$ 
where $a_0,b_0,c_0\in \mathrm{Mat}_{s}(\mathbb{Z})$ with $a_0,c_0$ symmetric and $b_0$ invertible. Then it was essentially proved in \cite{Zhao2022} (see Sections 4-5 in \cite{Zhao2022}) that
$$\sum_{x_{1},\dots,x_{2s}} \lambda_{1}(x_{1})\cdots\lambda_{2s}(x_{2s})e(\theta f(x_{1},\dots,x_{2s}))\ll 
 \frac{Y^{2s}\tau(q)^{\frac{s+1}{2}}}{\phi(W)^{2s}\phi(q)^{s/2}}\Big(1+Y^2W^{-2}|\eta|\Big)^{-s/2},$$
 where we recall that $\lambda_i$ is defined in \eqref{deflambda-i} and $\tau(q)$ is the divisor function. We remark that although the above result is proved in the case $s=5$, the proof works well for all $s$ without any changes. 
 
Taking $s=4$, we obtain \eqref{inequalitysieve}. This completes the proof.
\end{proof}

Note that \eqref{inequalitysieve} can be viewed as a log-free refinement of \eqref{standard}, and it was proved by invoking the sieve method. Again, the $\eta$-aspect is good enough, but the $q$-aspect is insufficient. Since $\eta$-aspect is large when $\theta=\frac{r}{q}+\eta\in \mathfrak{m}_{1}(M')$, now we are able to establish the following. 
\begin{lemma}
  [Minor arcs $\mathfrak{m}_{1}$]\label{m1}
		Suppose that $\det b\neq 0$. Suppose that $M\leqslant \log^{C}X$ for some constant $C\geqslant10$. Then we have
		\begin{equation}
			\int_{\mathfrak{m}_{1}(M')}|S(\theta)|d\theta \ll_{Q}\frac{W^{2}Y^{6}}{\phi(W)^{8}}M^{-1} .
		\end{equation}
\end{lemma}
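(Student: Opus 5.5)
We split $\mathfrak{m}_1(M')$ according to the size of the denominator and treat each $\theta=\frac{r}{q}+\eta\in\tilde I_{r,q}\setminus I_{r,q}$ using the information that $|\eta|>Mq^{c_1}(Y/W)^{-2}$. There are two regimes, small denominators and large denominators, with a cutoff $q\leqslant (\log X)^{A_0}$ for a suitable constant $A_0$ (say $A_0=20C$).

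\emph{Small denominators, $q\leqslant(\log X)^{A_0}$.} If $|\eta|\leqslant q^{c_1}(\log X)^{B_0}X^{-2}$ for a suitably large $B_0$ (note $(Y/W)^{-2}\ll X^{-2}\log^4 X$, so every relevant $\eta$ with $|\eta|$ not too big is covered), we apply Lemma \ref{lemmasieve}. This gives
$$|S(\theta)|\ll \frac{Y^8q^{o(1)}}{\phi(W)^8\phi(q)^2}(1+Y^2W^{-2}|\eta|)^{-2}.$$
We integrate over $|\eta|>Mq^{c_1}(Y/W)^{-2}$, which gives $\int (1+Y^2W^{-2}|\eta|)^{-2}d\eta\ll W^2Y^{-2}(Mq^{c_1})^{-1}$. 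Summing over $r\in(\mathbb{Z}/q\mathbb{Z})^*$ contributes a factor $\phi(q)$, so the total is
$$\frac{W^2Y^6}{\phi(W)^8}M^{-1}\sum_q \frac{q^{o(1)}}{\phi(q)q^{c_1}}.$$
This sum diverges mildly, so we need more care. I would instead use the bound $(1+Y^2W^{-2}|\eta|)^{-2}\leqslant (1+\cdot)^{-3/2}(Mq^{c_1})^{-1/2}$, or simply retain an extra power of $q$. The cleanest fix is to note that $\phi(q)^{-2}$ times $\phi(q)$ leaves $\phi(q)^{-1}$, and to combine this with $q^{-c_1}$ from the integral: $\sum_q q^{-1-c_1+o(1)}$ converges. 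The $o(1)$ is $\tau(q)^{5/2}$-type, which is $\ll_\varepsilon q^{\varepsilon}$ with $\varepsilon<c_1/2$. Hence this piece is $\ll W^2Y^6\phi(W)^{-8}M^{-1}$. Any $\eta$ with $q\leqslant(\log X)^{A_0}$ but $|\eta|>q^{c_1}(\log X)^{B_0}X^{-2}$ is handled by the large-$\eta$ argument below.

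\emph{Large $|\eta|$ or large $q$.} Here we use Lemma \ref{lemmastand}. We write $S(\theta)$ in the form \eqref{standard} with $F_1(x)=\prod_{i\leqslant4}\lambda_i(x_i)/\log^4 X$ and similarly $F_2$, both $1$-bounded with the right support. This gives
$$|S(\theta)|\ll \log^{8}X\Big(X^6\log^2X+\frac{X^4\log^2X}{\eta^2q^2}\Big).$$
The first term is $X^6\log^{10}X$, and it is the obstacle: integrated trivially over the measure of $\mathfrak{m}_1$ it is too large. I would therefore use the Lemma \ref{lemmastand} bound only when the second term dominates usefully. On $\tilde I_{r,q}$ we have $|\eta|\leqslant 1/(KqY/W)$, so $\eta^2q^2\leqslant (W/KY)^2$ and the second term is at least $X^6$-sized. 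This means the bound is essentially $X^4\log^{10}X/(\eta^2q^2)$. Integrating over $|\eta|>\eta_0$ gives $X^4\log^{10}X/(q^2\eta_0)$, and summing over $r$ gives $X^4\log^{10}X/(q\eta_0)$.
\begin{itemize}
\item For $q>(\log X)^{A_0}$ we take $\eta_0=Mq^{c_1}(Y/W)^{-2}$. The sum over $q$ of $q^{-1-c_1}$ beyond $(\log X)^{A_0}$ is $(\log X)^{-A_0c_1}$, which beats $\log^{10+8}X$ once $A_0$ is large relative to $c_1^{-1}$.
\item For small $q$ and $|\eta|>q^{c_1}(\log X)^{B_0}X^{-2}$ we take $\eta_0=q^{c_1}(\log X)^{B_0}X^{-2}$. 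This saves $(\log X)^{B_0}$, and choosing $B_0$ large absorbs all logs together with $W^{O(1)}$ and the factor $\phi(W)^8$ (since $W\leqslant\log X$).
\end{itemize}
Both contributions are then $\ll W^2Y^6\phi(W)^{-8}M^{-1}$, using $M\leqslant\log^CX$ to compare.

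The main obstacle is bookkeeping the ranges so that the first term of \eqref{standard} never has to be integrated on its own. This relies on the fact that on $\tilde I_{r,q}$ the second term dominates. Beyond that, the remaining work is checking that the choice of $A_0,B_0$ in terms of $C$ and $c_1$ makes every log factor harmless.
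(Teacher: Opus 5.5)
Your proposal is correct and is essentially the paper's argument. It uses the same three pieces: small $q$ with moderate $|\eta|$ via Lemma \ref{lemmasieve}, where $\sum_q q^{o(1)}\phi(q)^{-1}q^{-c_1}$ already converges, so nothing ``diverges mildly''; large $q$ via Lemma \ref{lemmastand} with $\eta_0=Mq^{c_1}(Y/W)^{-2}$, saving $(\log X)^{-A_0c_1}$, which forces $A_0\gg c_1^{-1}$ rather than $A_0=20C$; and small $q$ with large $|\eta|$ via Lemma \ref{lemmastand}, saving $(\log X)^{-B_0}$. The one real variation is the $X^6\log^{10}X$ term: you absorb it into the second term using $|\eta|q\leqslant W/(KY)$ at a cost of $\log^4X$, which works, whereas the paper integrates it trivially --- contrary to your remark this is not too large, because $\tilde I_{r,q}\setminus I_{r,q}=\emptyset$ for $q>T_0=(Y/(KWM))^{1/(1+c_1)}$, so $\mathfrak{m}_1(M')$ has measure $\ll (Y/W)^{-c_1/(1+c_1)}$.
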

\begin{proof} Note that $\tilde{I}_{r,q}\backslash I_{r,q}$ is empty if $q>(\frac{Y}{KWM})^{\frac{1}{1+c_1}}:=T_0$. Let $A_0$ and $B_0$ be two constants to be chosen later. We introduce
	\begin{equation*}
		I^{0}_{r,q}=\left\{\theta\in\mathbb{T}:|\theta-\frac{r}{q}|\leqslant \min\Big(\frac{Mq^{c_{1}}(\log X)^{B_0}}{(Y/W)^{2}},\,\frac{1}{Kq(Y/W)}\Big)\right\}.
	\end{equation*}
Then we define
$$\mathfrak{r}_1=\bigcup\limits_{\substack{1\leqslant q\leqslant T_0\\ r\in(\mathbb{Z}/q\mathbb{Z})^{*}}}  (\tilde{I}_{r,q}\setminus I^0_{r,q}), \qquad \mathfrak{r}_2=\bigcup\limits_{\substack{(\log X)^{A_0}< q\leqslant T_0\\ r\in(\mathbb{Z}/q\mathbb{Z})^{*}}}  (I^0_{r,q}\setminus I_{r,q})$$
and
$$\mathfrak{r}_3=\bigcup\limits_{\substack{q\leqslant (\log X)^{A_0}\\ r\in(\mathbb{Z}/q\mathbb{Z})^{*}}}  (I^0_{r,q}\setminus I_{r,q}).$$
Note that 
$$\mathfrak{m}_{1}(M')=\mathfrak{r}_1\cup \mathfrak{r}_2\cup\mathfrak{r}_3.$$
By Lemma \ref{lemmastand}, we have
\begin{align*}S(\theta) \ll X^6\log^{10}X+\frac{X^4\log^{10} X}{\eta^2q^2}.\end{align*}
Then we can deduce that
		\begin{align*}
			\int_{\mathfrak{r}_1}|S(\theta)|d\theta \ll &\,\sum_{1\leqslant q\leqslant T_0}\phi(q)\Big(\frac{1}{Kq(Y/W)} X^6\log^{10}X+\frac{X^4\log^{10} X}{q^2}
\frac{(Y/W)^{2}}{Mq^{c_{1}}(\log X)^{B_0}}\Big)
\\ \ll &\, (Y/W)^{-\frac{c_1}{1+c_1}}M^{-\frac{1}{1+c_1}}K^{-\frac{2+c_1}{1+c_1}}X^6\log^{10}X+\frac{(Y/W)^2X^4\log^{10}X}{M(\log X)^{B_0}},
		\end{align*}
and
		\begin{align*}
			\int_{\mathfrak{r}_2}|S(\theta)|d\theta \ll &\,
\sum_{(\log X)^{A_0}< q\leqslant T_0}\phi(q)\Big(\frac{1}{Kq(Y/W)} X^6\log^{10}X+\frac{X^4\log^{10} X}{q^2}
\frac{(Y/W)^{2}}{Mq^{c_{1}}}\Big)
\\ \ll &\, (Y/W)^{-\frac{c_1}{1+c_1}}M^{-\frac{1}{1+c_1}}K^{-\frac{2+c_1}{1+c_1}}X^6\log^{10}X+\frac{(Y/W)^2X^4\log^{10}X}{M(\log X)^{A_0c_1}}.
		\end{align*}
Recall that $W\leqslant \log X$ and $Y> \frac{X}{\log X}$, by choosing $A_0$ and $B_0$ large (say $A_0>30/c_1$ and  $B_0>30$), we obtain
\begin{align*}
			\int_{\mathfrak{r}_1\cup\mathfrak{r}_2}|S(\theta)|d\theta \ll \frac{W^{2}Y^{6}}{\phi(W)^{8}}M^{-1}.
		\end{align*}

For the integration over $\mathfrak{r}_3$, we apply Lemma \ref{lemmasieve} to deduce that
\begin{align*}
			\int_{\mathfrak{r}_3}|S(\theta)|d\theta \ll &\,
\sum_{q\leqslant (\log X)^{A_0}}\phi(q)\int_{\frac{Mq^{c_{1}}}{(Y/W)^{2}}}^{+\infty}\frac{Y^8q^{\varepsilon}}{\phi(W)^8\phi(q)^2}\Big(1+Y^2W^{-2}|\eta|\Big)^{-2}d\eta
\\ \ll&\,\frac{W^{2}Y^{6}}{\phi(W)^{8}}M^{-1}.
		\end{align*}
The proof of this lemma is complete.
\end{proof}

\section{Restricted Gauss sums on average}

Let $q$ be a positive integer, and let $f_1,f_2:(\mathbb{Z}/q\mathbb{Z})^4\rightarrow \mathbb{C}$. Following \cite{Green}, we introduce the following restricted Gauss sums on average
\begin{align}\label{defineT}
T_{f_1,f_2}(r):=q^2\mathbb{E}_{x,y\in (\mathbb{Z}/q\mathbb{Z})^4}f_1(x)f_2(y)e\Big(\frac{r}{q}Q(x,y)\Big).\end{align}
The dependence on $q$ in the above notation is omitted. For convenience, we also define
\begin{align}\label{defineScal}
\mathcal{S}_{f_1,f_2}(q,r)=\sum_{x,y\in (\mathbb{Z}/q\mathbb{Z})^4}f_1(x)f_2(y)e\Big(\frac{r}{q}Q(x,y)\Big).\end{align}
One has $T_{f_1,f_2}(r)=q^{-6}\mathcal{S}_{f_1,f_2}(q,r)$. The crucial step is to prove the following, which is an extension of \cite[Proposition 6.2]{Green}.

\begin{proposition}
   \label{proposition51}
		There is an absolute constant $c_0 > 0$ with the following property. Suppose that $Q$ is pan-generic. There is $p_0(Q)$ such that if $q$ is squarefree and with all prime factors greater than $p_0(Q)$, then we have
\begin{equation}\label{zq-main-ineq}
			\frac{1}{\phi(q)}\sum_{r \in (\mathbb{Z}/q\mathbb{Z})^*} |{T_{f_1, f_2}(r)}| \ll q^{- c_0} \| f_1 \|_2 \| f_2 \|_2
		\end{equation}
		for any $f_1, f_2 \in \ell^2((\mathbb{Z}/q\mathbb{Z})^4)$. 
\end{proposition}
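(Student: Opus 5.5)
We follow Green's approach from \cite[Proposition 6.2]{Green} and track where the condition "eigenvalues of $\Delta$ avoid $\{-1,0\}$" enters, to see that (PG) is enough.

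\textbf{Step 1: reduction to primes.} Since $q$ is squarefree, the Chinese remainder theorem factors $(\mathbb{Z}/q\mathbb{Z})^4$ and $(\mathbb{Z}/q\mathbb{Z})^*$ over the primes $p\mid q$. The map $(f_1,f_2)\mapsto T_{f_1,f_2}(r)$ is bilinear, and $e(\frac{r}{q}Q)$ splits as a tensor product of the local phases $e(\frac{r_p}{p}\bar q_p Q)$, where $q=p\bar q_p$ locally. Hence $T(r)$ corresponds to a tensor product of local operators, and the averaged estimate should follow multiplicatively from the prime case
\[
\frac{1}{p-1}\sum_{r\in\mathbb{F}_p^*}|T_{f_1,f_2}(r)|\leqslant C p^{-c_0}\|f_1\|_2\|f_2\|_2 ,\qquad p>p_0(Q).
\]
The multiplicative step does not follow from the pointwise bound for general (non-tensor) $f_i$. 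I would therefore argue as Green does: by Cauchy--Schwarz, bound the average by $\big(\mathbb{E}_r|T(r)|^2\big)^{1/2}$. Then write $T(r)=\langle \overline{f_1},\,U_r f_2\rangle$, where $U_r$ is the operator with kernel $q^{-2}e(\frac rq x^Tby)$ twisted by the diagonal multiplications $e(\frac rq x^Tax)$ and $e(\frac rq y^Tcy)$. This reduces the problem to an operator-norm statement for $\mathbb{E}_r U_r\otimes\overline{U_r}$ acting on $\ell^2((\mathbb{Z}/q)^8)$, which does tensorize over $p$. The implied constant $C$ at each prime has to be absorbed into $p^{-c_0/2}$ by taking $p_0(Q)$ large.

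\textbf{Step 2: prime case via the Weil representation.} The kernel $q^{-2}e(\frac rp x^Tby)$ is a Fourier transform after the linear change $y\mapsto b^Ty$, and multiplication by $e(\frac rp x^Tax)$ is a unipotent element of the Weil representation $\omega_r$ of $\Sp_8(\mathbb{F}_p)$. So, up to unitary factors coming from $b$ (which is invertible mod $p$ for $p\nmid\det b$),
\[
T(r)=\langle \overline{f_1},\,\omega_r(g)f_2\rangle
\]
for an explicit element $g=g(a,b,c)\in\Sp_8(\mathbb{F}_p)$ whose conjugacy class is governed by $\Delta$. The average over $r$ then becomes an average over the dilations $\omega_r$, i.e.\ over the orbit of $g$ under conjugation by the scalars $\mathrm{diag}(r,r^{-1})$. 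Following Green, I would bound $\mathbb{E}_r|\langle u,\omega_r(g)v\rangle|^2$ by $\|u\|^2\|v\|^2$ times a character-sum-type quantity. Computing the trace of $\omega(g)$ via the standard formula $|\mathrm{tr}\,\omega(g)|^2=p^{\dim\ker(g-1)}$, this quantity is controlled by $p^{-1}$ times the largest number of fixed directions $\dim\ker(g^{\pm}h-1)$ over the relevant group elements $h$.

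\textbf{Step 3: where the eigenvalue conditions enter (main obstacle).} Green used the condition that the eigenvalues of $\Delta$ avoid $\{-1,0\}$ to ensure that $g$ and its relatives have small fixed spaces. Under (PG) alone, $\Delta$ may have a single eigenvalue equal to $0$ or to $-1$, since its four eigenvalues are distinct. Then $g-1$ can have a kernel of dimension up to $2$, which costs a fixed power of $p$ in the trace computation.

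I would handle this by computing $\ker(g-1)$ explicitly in terms of $\ker\Delta$ and $\ker(\Delta+I)$. These kernels are at most one-dimensional, since the eigenvalues are distinct and $p>p_0(Q)$ keeps them distinct modulo $p$. This gives a fixed-space dimension of at most $2$ out of $8$, whereas a saving of $p^{-c_0}$ only requires dimension strictly less than half. Distinctness of the eigenvalues is also what prevents the stabilizer/centralizer of $g$ in the averaging from being large, and this is exactly where genuine care is needed.

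Concretely, I expect the bound to reduce to counting $r\in\mathbb{F}_p^*$ such that two dilates of $\Delta$ share an eigenvector. Distinct eigenvalues force this count to be $O(1)$. Combining the pieces gives the prime bound with some small $c_0$ (for instance $c_0=1/4$), uniformly in $Q$ once $p>p_0(Q)$, and Step 1 then assembles these into \eqref{zq-main-ineq}.
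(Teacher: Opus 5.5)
There is a genuine gap, both in the reduction of Step 1 and in the mechanism of Steps 2--3.

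\textbf{Step 1.} It reduces the problem to a false statement. For every unitary $U$ on $\ell^2((\mathbb{Z}/q\mathbb{Z})^4)$, the vector $\Omega=\sum_x\delta_x\otimes\overline{\delta_x}$ satisfies $(U\otimes\overline{U})\Omega=\Omega$; it is the identity under $\ell^2\otimes\overline{\ell^2}\cong\mathrm{End}(\ell^2)$. So $\mathbb{E}_r U_r\otimes\overline{U_r}$ has operator norm exactly $1$ at every prime, and tensorizing norms gives nothing. Whatever smallness $\mathbb{E}_r|T(r)|^2$ has is a property of the pure tensors $\overline{f_1}\otimes f_1$ and $f_2\otimes\overline{f_2}$, not of the operator. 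Neither Green nor the paper reduces the bilinear inequality to primes. Only quasirandomness is reduced to primes: Proposition \ref{quasirandom-gamma} follows from Proposition \ref{key-nontrivial} because $\pi(\Gamma_q)=\prod_{p\mid q}\Gamma_p$ and irreducibles of a product are tensor products. The estimate \eqref{zq-main-ineq} is then proved directly at level $q$, from this quasirandomness together with almost uniform distribution of the convolution powers of $\mu_q$ (Green's Propositions 8.2 and 8.4, the latter valid for every $Q$).

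\textbf{Steps 2--3.} Traces and fixed spaces of individual elements do not control $\mathbb{E}_r|\langle u,\omega(g^{(r)})v\rangle|$ for arbitrary unit $u,v$. Knowing $|\mathrm{tr}\,\omega(h)|$ gives at best Hilbert--Schmidt bounds, which are useless on a $p^4$-dimensional space. The relevant objects are the products $g^{-(r)}g^{(s)}$ arising in $\langle U_rv,U_sv\rangle$ and in higher moments, and hence the group $\Gamma_p$ they generate and how $\rho$ decomposes on it. You have also misidentified what (PG) breaks. Green needs $\alpha_i\notin\{0,-1\}$ so that every factor $M_{\alpha_i}$ of $\Phi(\tau\Gamma_p\tau^{-1})$ is $\SL_2(\F_p(\alpha_i))$. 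With an eigenvalue $0$ or $-1$, that factor becomes $U_p$ or $R_p$ (Lemma \ref{baby case}). These are solvable groups with nontrivial $1$-dimensional characters, so $\Gamma_p$ itself is no longer quasirandom, and one must show such small representations do not occur in $\tilde\rho|_{\Gamma_p}$.

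\textbf{The missing idea} is the paper's argument:
\begin{enumerate}
\item Identify $\Gamma_p$ via Goursat's lemma (Lemmas \ref{Ribet2}, \ref{Ribet3}, \ref{gamma_p}).
\item Pass to the subgroup $\Theta_p$ consisting of the $\SL_2$-factors, trivial at $i\in\mathcal{I}$.
\item Prove that $\rho|_{\Theta_p}$ has no nonzero invariant vector; by Lemma \ref{invariant} this also covers the dilation twists. Quadratic modulations $l(CN)$ with $N=\Delta(\Delta+I)\in Y_p$, together with dilations $s(P)$, $P\in X_p$, force an invariant $f$ to be supported on $\bigoplus_{i\in\mathcal{I}}W_i$; this uses that $\Delta$ is self-adjoint for $x^TC\Delta(\Delta+I)y$. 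A dilation $s(P_0)$ with $P_0\in X_p$ and $\det P_0$ a non-residue fixes that subspace pointwise but acts by the Legendre symbol $-1$, forcing $f=0$ there.
\end{enumerate}
Hence every constituent of $\tilde\rho|_{\Gamma_p}$ is nontrivial on a product of $\SL_2(\F_{p^d})$'s and has dimension $\geqslant (p-1)/2$. Nothing in your Step 3 substitutes for this: not the fixed-space dimension ``$\leqslant 2$ out of $8$'', not the $O(1)$ count of $r$, and not $c_0=1/4$.
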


The purpose of this section is to prove Proposition \ref{propminor} by assuming Proposition \ref{proposition51}. And the main task of the remaining sections is to prove Proposition \ref{proposition51}. 

We first recall some results from \cite{Green}.
\begin{lemma}
  [Lemma 6.1 in \cite{Green}]\label{lemma52}
		Suppose that $f_{1},f_{2}:(\mathbb{Z}/q\mathbb{Z})^{4}\to \mathbb{C}$ and $\det b\neq0$. Then for any $r\in(\mathbb{Z}/q\mathbb{Z})^{*}$ we have
		\begin{equation*}
			|T_{f_{1},f_{2}}(r)|\leqslant (q,\det b)^{2}\|f_{1}\|_{2}\|f_{2}\|_{2}.
		\end{equation*}
\end{lemma}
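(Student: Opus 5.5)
We prove Lemma \ref{lemma52} by bounding $\mathcal{S}_{f_1,f_2}(q,r)$ as the bilinear form of a twisted Fourier-type operator and computing its operator norm. The key identity is $Q(x,y)=x^Tax+x^Tby+y^Tcy$. Set $g_1(x)=f_1(x)e_q(rx^Tax)$ and $g_2(y)=f_2(y)e_q(ry^Tcy)$. Then $|g_i|=|f_i|$ pointwise, so $\|g_i\|_2=\|f_i\|_2$, and
\[
\mathcal{S}_{f_1,f_2}(q,r)=\sum_{x,y\in(\mathbb{Z}/q\mathbb{Z})^4} g_1(x)g_2(y)e_q(r\,x^Tby).
\]
Here $\|\cdot\|_2$ is the unnormalized $\ell^2$ norm on $(\mathbb{Z}/q\mathbb{Z})^4$. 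With this convention the normalization $T=q^{-6}\mathcal{S}$ must produce the stated bound.

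We bound the bilinear form by Cauchy--Schwarz in $x$:
\[
|\mathcal{S}|\le \|g_1\|_2\Big(\sum_x\Big|\sum_y g_2(y)e_q(rx^Tby)\Big|^2\Big)^{1/2}.
\]
Expanding the square gives
\[
\sum_{y,y'}g_2(y)\overline{g_2(y')}\sum_x e_q\big(r\,x^Tb(y-y')\big).
\]
The inner sum equals $q^4$ when $rb(y-y')\equiv 0\pmod q$ and vanishes otherwise. Since $r$ is a unit, this condition says $b(y-y')\equiv0\pmod q$.

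We count the kernel of $b$ acting on $(\mathbb{Z}/q\mathbb{Z})^4$ by Smith normal form. Write $b=UDV$ with $U,V\in\GL_4(\mathbb{Z})$ and $D=\mathrm{diag}(d_1,\dots,d_4)$, where $d_1\cdots d_4=\pm\det b$. The kernel has size $\prod_i(q,d_i)\le (q,\det b)$. The last inequality uses $\prod (q,d_i)\mid (q^4,\prod d_i)$, and one must check that this is at most $(q,\det b)$ up to the right power. More carefully, $\prod_i (q,d_i)$ divides $\gcd(q^4,\det b)\le (q,\det b)^4$, which is too weak. We therefore avoid it and bound the quantity directly.

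To do so, write $k=|\ker b|$. Then
\[
\sum_{y-y'\in\ker b}|g_2(y)||g_2(y')|\le k\|g_2\|_2^2
\]
by Cauchy--Schwarz or AM--GM on each coset translate. Hence $|\mathcal{S}|\le q^2 k^{1/2}\|f_1\|_2\|f_2\|_2$.

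Converting back, $|T|=q^{-6}|\mathcal{S}|\le q^{-4}k^{1/2}\|f_1\|_2\|f_2\|_2$ with unnormalized norms. If instead the paper's $\|\cdot\|_2$ is the normalized norm $(\mathbb{E}|f|^2)^{1/2}$, then $\|f\|_{\ell^2}=q^2\|f\|_{L^2}$, and the bound becomes $|T|\le k^{1/2}\|f_1\|_2\|f_2\|_2$. Since $k=\prod_i(q,d_i)\le\prod_i(q,d_i)^2$ and $\prod_i (q,d_i)\le (q,\det b)^{4}$, we get $k^{1/2}\le (q,\det b)^2$. This is exactly the stated bound. So we interpret $\|\cdot\|_2$ as normalized, matching the $\mathbb{E}$ normalization in \eqref{defineT}.

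The only delicate point is this kernel count. We need $k=\prod_i(q,d_i)\le (q,\det b)^4$, which holds because each $(q,d_i)$ divides $(q,\det b)$. Everything else is the orthogonality computation above.
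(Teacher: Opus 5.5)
Your argument is correct and is essentially the standard proof of Green's Lemma 6.1, which the paper quotes without proof. The steps match: absorb the diagonal phases, apply Cauchy--Schwarz in $x$, use orthogonality to force $b(y-y')\equiv 0 \pmod q$, and bound the kernel by $(q,\det b)^4$. The normalized convention $\|f\|_2=(\mathbb{E}_x|f(x)|^2)^{1/2}$ you settle on is the one the paper needs: it is what gives $\sum_r|\mathcal{S}_{f_1,f_2}(q,r)|\ll q^7$ for $1$-bounded $f_i$ in the proof of Lemma \ref{m2}.

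In a write-up, delete the false intermediate claim $\prod_i(q,d_i)\leqslant (q,\det b)$, and delete the remark that $(q,\det b)^4$ is ``too weak''. That bound is exactly what is needed. It also follows without Smith normal form from $\operatorname{adj}(b)\,b=(\det b)I$, which forces $(\det b)h_i\equiv 0\pmod q$ for every coordinate of any $h$ with $bh\equiv 0 \pmod q$.
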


\begin{lemma}\label{lemma53}Recall \eqref{defineScal}.
		Suppose that $q=q_1q_0$ with $(q_0,q_1)=1$. Then
		\begin{equation}\label{reductiontoq0}
			\sup_{f_1,f_2}\sum_{r\in (\mathbb{Z}/q\mathbb{Z})^\ast}|\mathcal{S}_{f_1,f_2}(q,r)|\leqslant q_1^{17}\sup_{g_1,g_2}\sum_{r\in (\mathbb{Z}/q_0\mathbb{Z})^\ast}|\mathcal{S}_{g_1,g_2}(q_0,r)|,
		\end{equation}
where the $\sup$ is over $1$-bounded functions $f_1,f_2:(\mathbb{Z}/q\mathbb{Z})^4\rightarrow \mathbb{C}$ and $g_1,g_2:(\mathbb{Z}/q_0\mathbb{Z})^4\rightarrow \mathbb{C}$ respectively.
\end{lemma}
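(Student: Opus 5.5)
We prove Lemma \ref{lemma53} by a Chinese remainder factorisation of the exponential sum, treating the $q_1$-component trivially. Since $(q_0,q_1)=1$, the CRT gives an isomorphism $\mathbb{Z}/q\mathbb{Z}\cong \mathbb{Z}/q_0\mathbb{Z}\times\mathbb{Z}/q_1\mathbb{Z}$. Write $x\equiv q_1\bar q_1 x_0+q_0\bar q_0 x_1 \pmod q$, where $\bar q_1 q_1\equiv 1 \pmod{q_0}$ and $\bar q_0 q_0\equiv1\pmod{q_1}$; do the same for $y$. The reduced residues $r\in(\mathbb{Z}/q\mathbb{Z})^*$ correspond bijectively to pairs $(r_0,r_1)\in(\mathbb{Z}/q_0\mathbb{Z})^*\times(\mathbb{Z}/q_1\mathbb{Z})^*$ through
\[
\frac{r}{q}\equiv \frac{r_0}{q_0}+\frac{r_1}{q_1}\pmod 1 .
\]
Because $Q$ has integer coefficients, $Q(x,y)\equiv Q(x_0,y_0)\pmod{q_0}$ and $Q(x,y)\equiv Q(x_1,y_1)\pmod{q_1}$. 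Hence
\[
e\Big(\frac rq Q(x,y)\Big)=e\Big(\frac{r_0}{q_0}Q(x_0,y_0)\Big)\,e\Big(\frac{r_1}{q_1}Q(x_1,y_1)\Big).
\]

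We then fix $r_1$ and the $q_1$-components $x_1,y_1\in(\mathbb{Z}/q_1\mathbb{Z})^4$, and define
\[
g_1^{(x_1)}(x_0):=f_1(x),\qquad g_2^{(y_1)}(y_0):=f_2(y),
\]
where $x$ and $y$ are the CRT lifts. These are $1$-bounded functions on $(\mathbb{Z}/q_0\mathbb{Z})^4$. This gives
\[
\mathcal{S}_{f_1,f_2}(q,r)=\sum_{x_1,y_1} e\Big(\frac{r_1}{q_1}Q(x_1,y_1)\Big)\,\mathcal{S}_{g_1^{(x_1)},g_2^{(y_1)}}(q_0,r_0).
\]
Taking absolute values and applying the triangle inequality inside the $x_1,y_1$ sum gives
\[
|\mathcal{S}_{f_1,f_2}(q,r)|\le \sum_{x_1,y_1}\big|\mathcal{S}_{g_1^{(x_1)},g_2^{(y_1)}}(q_0,r_0)\big|.
\]

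Next we sum over $r\leftrightarrow(r_0,r_1)$. For each fixed $(r_1,x_1,y_1)$, the inner sum over $r_0$ is at most
\[
\sup_{g_1,g_2}\sum_{r_0\in(\mathbb{Z}/q_0\mathbb{Z})^*}|\mathcal{S}_{g_1,g_2}(q_0,r_0)|.
\]
The number of triples $(r_1,x_1,y_1)$ is at most $\phi(q_1)\,q_1^{8}\le q_1^{9}$, so the right side of \eqref{reductiontoq0} actually holds with the better factor $q_1^{9}$, and a fortiori with $q_1^{17}$. The only point needing care is that the functions $g_i$ depend on $x_1$ and $y_1$; this is harmless because we take the supremum over $g_1,g_2$ for each fixed choice.

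There is no real obstacle in this argument. The only steps that need checking are the CRT bookkeeping for $r/q$ and the observation that both $Q(x,y)\bmod q_0$ and $Q(x,y)\bmod q_1$ depend only on the corresponding CRT components, which holds because $Q$ is an integral form. The generous exponent $17$ in the statement suggests the authors may have bounded more crudely, for instance by $q_1^{8}\cdot q_1^{8}\cdot q_1$; any such count suffices.
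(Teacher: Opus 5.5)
Your proof is correct and takes essentially the paper's route. The paper gives no argument of its own: it cites Green's Lemma~6.4 with $X=q$ and remarks that following that proof yields the factor $q_1^{9}$. Your Chinese-remainder splitting of $x$, $y$ and $r$, with the $q_1$-components $(r_1,x_1,y_1)$ counted trivially to give $\phi(q_1)q_1^{8}\leqslant q_1^{9}$, is the argument behind that remark. You also correctly identify the one point needing care: the auxiliary functions $g_i$ depend only on $x_1,y_1$ and not on $r_0$, so the supremum can be taken outside the $r_0$-sum.
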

\begin{proof}The desired result directly follows from taking $X=q$ in \cite[Lemma 6.4]{Green}. Indeed, \eqref{reductiontoq0} holds with $q_1^{17}$ replaced by $q_1^9$ if we follow the proof of \cite[Lemma 6.4]{Green}.\end{proof}

	Let $p_{0}(Q)$ be the threshold in Proposition \ref{proposition51}. For any $q$, we write $q=q_0q_1$, where $q_{0}$ for the product of all primes $p>p_{0}(Q)$ which
	divide $q$ precisely once, and $q_{1}$ is the product of all prime powers
	$p^{j}\parallel q$ with $p\leqslant p_{0}$ or $j\geqslant 2$. Note that $(q_{0},q_{1})=1$. Then one has, see \cite[(6.16)]{Green}, that
		\begin{equation}\label{sumoverq}
		\sum_{H<q\leqslant 2H}\min\{q^{-1},\ q_{1}^{11}q^{-1-c_{0}}\}\ll H^{-c_{0}/100}.
		\end{equation}

Let $A_0>0$ be a large constant to be chosen later. We introduce 
\begin{align}\label{defineBq}
B_q:=B_q(I,W,b)=\begin{cases}
\frac{Y}{\phi(qW)} \ \ &\, q\leqslant (\log X)^{A_0},
\\ 
\frac{Y\log X}{qW} \ \ &\, (\log X)^{A_0}<q\leqslant KY/W.\end{cases}
\end{align}
Recall \eqref{deflambda-i}. Then for any $u$, we have	\begin{equation}\label{divideintomodq}
\sum_{x\equiv u\pmod{q}}\lambda_i(x)\ll \sum_{\substack{y\in I \\ y\equiv Wu+b\pmod{qW}}}\Lambda(y)\ll B_q.
		\end{equation}
Note that $O(\frac{Y\log X}{qW})$ is the trivial upper bound for all $q\leqslant KY/W$. When $ q\leqslant (\log X)^{A_0}$, one can use the Siegel-Walfisz theorem to improve the trivial bound to $O(\frac{Y}{\phi(qW)})$.

Now we are ready to prove the following.
\begin{lemma}
  [Minor arcs $\mathfrak{m}_{2}$]\label{m2}
		Suppose that $Q$ is a pan-generic quadratic form. Suppose that $M\leqslant \log^{C}X$ for some constant $C\geqslant10$. Then we have
		\begin{equation}\label{estimateinlemmam2}
			\int_{\mathfrak{m}_{2}(M')}|S(\theta)|d\theta \ll_{Q}\frac{W^{2}Y^{6}}{\phi(W)^{8}}M^{-1} .
		\end{equation}
\end{lemma}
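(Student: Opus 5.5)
The plan is to follow Green's treatment of the arcs $I_{r,q}$ with large $q$: reduce $S(\theta)$ at $\theta=r/q+\eta$ to the restricted Gauss sums $T_{f_1,f_2}(r)$, average over $r$ using Proposition \ref{proposition51}, and then sum over $q$ using \eqref{sumoverq}.

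\textbf{Reduction to residues mod $q$.} Fix $q\in(M',KY/W]$ and $\theta=r/q+\eta\in I_{r,q}$, so that $|\eta|\leqslant Mq^{c_1}(Y/W)^{-2}$. Write $x=(x_1,\dots,x_4)$ and $y=(x_5,\dots,x_8)$, and split each variable into residue classes $u,v\in(\mathbb{Z}/q\mathbb{Z})^4$. Since $Q(x,y)\equiv Q(u,v)\pmod q$, we get $e(\theta Q(x,y))=e_q(rQ(u,v))e(\eta Q(x,y))$. The factor $e(\eta Q(x,y))$ still couples $x$ and $y$. We remove it by partial summation in each of the eight variables, or equivalently by Fourier-expanding a smooth cutoff of $e(\eta Q)$ on boxes of side $(Y/W)/N$ with $N\approx (1+|\eta|(Y/W)^2)^{O(1)}\leqslant (Mq^{c_1})^{O(1)}$. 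This expresses $S(\theta)$ as a sum of $N^{O(1)}$ terms of the form $\mathcal{S}_{f_1,f_2}(q,r)$, where
\[f_1(u)=\sum_{x\equiv u\ (q)}\lambda_1(x_1)\cdots\lambda_4(x_4)\,\omega(x),\]
$\omega$ is a bounded weight, and $f_2$ is defined similarly. By \eqref{divideintomodq} we have $\|f_i\|_\infty\ll B_q^4$. We also have
\[\|f_i\|_2\leqslant \|f_i\|_\infty^{1/2}\|f_i\|_1^{1/2}\ll B_q^{2}\Big(\frac{Y}{\phi(W)}\Big)^{2}.\]
This gives
\[|S(\theta)|\ll N^{O(1)}\, q^{6}\,\max|T_{f_1,f_2}(r)|.\]

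\textbf{Averaging over $r$.} Now split $q=q_0q_1$ as in the paper. Lemma \ref{lemma53} reduces the sum over $r$ of $|\mathcal{S}|$ to modulus $q_0$, at a cost of $q_1^{17}$. On $q_0$, which is squarefree with all prime factors large, we apply Proposition \ref{proposition51}. Together with the trivial bound from Lemma \ref{lemma52}, this gives
\[\sum_{r}\int_{I_{r,q}}|S(\theta)|\,d\theta\ll \frac{Mq^{c_1}}{(Y/W)^2}\,(Mq^{c_1})^{O(1)}\,\phi(q)\,q^{6}\min\big(1,q_1^{O(1)}q^{-c_0}\big)\,B_q^4\Big(\frac{Y}{\phi(W)}\Big)^4.\]
Inserting $B_q$ from \eqref{defineBq}, for $q\leqslant(\log X)^{A_0}$ this is
\[\ll \frac{W^2Y^6}{\phi(W)^8}\,M^{O(1)}q^{O(c_1)}\min\big(q^{-1},q_1^{11}q^{-1-c_0}\big).\]
Here the $q^{-1}$ comes from $q^{7}/\phi(q)^4\cdot(1/q^2)$, using $\phi(qW)\geqslant\phi(q)\phi(W)$. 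The exponent bookkeeping must be arranged so that $q_1$ appears to at most the power $11$, which may require interpolating between the trivial bound and Proposition \ref{proposition51}.

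\textbf{Summing over $q$.} We sum dyadically over $q>M'$ using \eqref{sumoverq}. Since $c_1=c_0/10^4$, the factor $q^{O(c_1)}$ is absorbed, and we obtain
\[\ll \frac{W^2Y^6}{\phi(W)^8}M^{O(1)}(M')^{-c_0/200}.\]
With $M'=M^{10^4/c_0}$ this is $\ll \frac{W^2Y^6}{\phi(W)^8}M^{-1}$.

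\textbf{Large $q$ and the main obstacle.} For $q>(\log X)^{A_0}$ the bound $B_q$ loses a factor $\log X\cdot q/\phi(q)$ per variable, a total of $(\log X)^{8+o(1)}$. This is beaten by the saving $q^{-c_0/100}\leqslant(\log X)^{-A_0c_0/100}$ once $A_0$ is large; the leftover factor $M^{O(1)}\leqslant(\log X)^{O(C)}$ is handled the same way. The main obstacle is the first step: decoupling the factor $e(\eta Q)$ so that the loss is only polynomial in $Mq^{c_1}$. Equally delicate is keeping the $\ell^2$ norms at exactly the scale $B_q^2(Y/\phi(W))^2$, which is what allows the total $W$-dependence to come out as $W^2\phi(W)^{-8}$.
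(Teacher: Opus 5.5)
Your outline follows the paper's proof. You decouple $e(\eta Q)$ through the Fourier transform of the smooth weight $\Omega_{\eta(Y/W)^2}$; this is just \eqref{boundW}, costing $(Mq^{c_1})^{16}$, so it is not the real obstacle. You then reduce to $\mathcal{S}_{f_1,f_2}(q,r)$ with $\|f_i\|_\infty\ll B_q^4$, combine Lemma \ref{lemma52}, Lemma \ref{lemma53} and Proposition \ref{proposition51}, sum dyadically with \eqref{sumoverq}, and take $A_0$ large for $q>(\log X)^{A_0}$.

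\textbf{Wrong normalization.} The exponent count is wrong, exactly where you were unsure. Your display uses $\|f_i\|_2\ll B_q^2(Y/\phi(W))^2$ and the factor $q^6$. Together with $B_q\leqslant Y/(\phi(q)\phi(W))$, this gives $\frac{W^2Y^6}{\phi(W)^8}q^{6}\phi(q)^{-3}=\frac{W^2Y^6}{\phi(W)^8}q^{3+o(1)}$, up to powers of $Mq^{c_1}$. That is worse than the trivial bound $\phi(q)Mq^{c_1}\frac{W^2Y^6}{\phi(W)^8}$. Your justification $q^7\phi(q)^{-4}q^{-2}$ is also $q^{1+o(1)}$, not $q^{-1}$.

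The source of the error is the norm. In Lemma \ref{lemma52} and Proposition \ref{proposition51}, $\|f\|_2$ is the normalized norm, $\|f\|_2^2=\mathbb{E}_u|f(u)|^2$. With counting measure, Lemma \ref{lemma52} would only give $|\mathcal{S}_{f_1,f_2}(q,r)|\leqslant q^{10}$ for $1$-bounded $f_i$, worse than the trivial $q^8$. With the normalized norm your H\"older bound becomes $\|f_i\|_2\ll q^{-2}B_q^2(Y/\phi(W))^2$. The factor $q^6$ then turns into $q^2$, and you get $q^2\phi(q)^{-3}=q^{-1+o(1)}$ as needed. So the scale $B_q^2(Y/\phi(W))^2$ you said must be kept exactly is off by $q^2$ per function.

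\textbf{Misuse of Lemma \ref{lemma53}.} Lemma \ref{lemma53} is stated only for $1$-bounded functions. Your $\ell^1$--$\ell^\infty$ bound therefore cannot be passed through it as written. Normalize as the paper does, replacing $f_i$ by $f_i/B_q^4$. This gives
\[
\sum_r|\mathcal{S}_{f_1,f_2}(q,r)|\ll B_q^8\min(q^7,\,q_1^{17}q_0^{7-c_0}).
\]
No interpolation is then needed to get the power $q_1^{11}$; it is arithmetic:
\[
q_1^{17}q_0^{7-c_0}q^{-8}=q_1^{9}q_0^{-1-c_0}\leqslant q_1^{11}q^{-1-c_0}.
\]
For $q\leqslant(\log X)^{A_0}$ you also have $B_q^8q^8\leqslant (Y/\phi(W))^8(q/\phi(q))^8$. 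Combining these, each $q$ contributes
\[
\frac{W^2Y^6}{\phi(W)^8}\,q^{o(1)}\min\big(q^{-1},\,q_1^{11}q^{-1-c_0}\big),
\]
times $(Mq^{c_1})^{17}$, which is what is fed into \eqref{sumoverq}. With these two corrections your argument coincides with the paper's.
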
 
\begin{proof} We introduce the notation 
	\begin{equation*}
		S_{F_1,F_2}(\theta)=\sum\limits_{x,y} F_1(x)F_2(y)e(\theta Q(x,y))
	\end{equation*}
and	\begin{equation*}
		S_{F}(\theta)=\sum\limits_{\mathbf{x}} F(\mathbf{x})e(\theta Q(\mathbf{x})),
	\end{equation*}
where $F_1,F_2:[X]^4\rightarrow \mathbb{C}$ and $F:[X]^8\rightarrow \mathbb{C}$.

Let $\omega:\mathbb{R}^{8}\to\mathbb{R}$ be a smooth compactly-supported function with $\omega(\mathbf{t})=1$ for $\mathbf{t}\in [0,1]^{8}$. Set for any real parameter $\lambda$,\begin{equation*}
		\Omega_{\lambda}(\mathbf{t}):=\omega(\mathbf{t}-(Y_{0}-b)/Y)e(\lambda Q(\mathbf{t})).
	\end{equation*}
Now taking $F(\mathbf{x})=\prod_{i=1}^8\lambda_i(x_i)$, we have
	\begin{equation}\label{installW}
		S(\frac{r}{q}+\eta)=S_{F}(\frac{r}{q}+\eta)=\sum\limits_{\mathbf{x}} F(\mathbf{x})e_{q}(rQ(\mathbf{x}))\Omega_{\eta (Y/W)^{2}}(\frac{\mathbf{x}}{Y/W}).
	\end{equation}
	Integration by parts gives
	\begin{equation}\label{boundW}
		\|\hat{\Omega}_{\lambda}\|_{1}\ll \max(1,|\lambda|)^{16}.
	\end{equation}
	By the Fourier inversion, from \eqref{installW} we deduce that
	\begin{equation}
		S(\frac{r}{q}+\eta)=\int_{\mathbb{R}^{8}}\hat{\Omega}_{\eta (Y/W)^{2}}(\xi)S_{F_{\xi}}(\frac{r}{q})d\xi,
	\end{equation}
	where $F_{\xi}(\mathbf{x}):=F(\mathbf{x})e\big(\xi\cdot\mathbf{x}/(Y/W)\big)$. Therefore,
	\begin{align*}
		\int_{\mathfrak{m}_{2}(M')}|S(\theta)|d\theta&\leqslant	\sum_{M'<q\leqslant K(Y/W)}\int_{|\eta|\leqslant \frac{Mq^{c_{1}}}{(Y/W)^{2}}}\sum_{r \in (\mathbb{Z}/q\mathbb{Z})^\ast}|	S_{F}(\frac{r}{q}+\eta)|d\eta\notag\\
		&\leqslant	\sum_{M'<q\leqslant K(Y/W)}\int_{|\eta|\leqslant \frac{Mq^{c_{1}}}{(Y/W)^{2}}} \int_{\mathbb{R}^{8}}	|\hat{\Omega}_{\eta (Y/W)^{2}}(\xi)|\sum_{r \in (\mathbb{Z}/q\mathbb{Z})^\ast}|S_{F_{\xi}}(\frac{r}{q})|d\xi d\eta.
	\end{align*}
and then by \eqref{boundW} and the dyadic argument, we further have
	\begin{equation}\label{divideintoH0}
			\int_{\mathfrak{m}_{2}(M')}|S(\theta)|d\theta \ll \sum_{\substack{H=2^j~(j\in \mathbb{Z}^{+})\\ H> M'/2}}\frac{M^{17}H^{17c_1}}{(Y/W)^2}\sup_{\xi}\mathcal{H}(\xi),
		\end{equation}
where
\begin{align}\label{definecalH}\mathcal{H}(\xi):=\mathcal{H}(\xi;H)=\sum_{H<q\leqslant 2H}\sum_{r \in (\mathbb{Z}/q\mathbb{Z})^\ast}|S_{F_\xi}(\frac{r}{q})|.\end{align}

Write $G_1(x)=\prod_{i=1}^4\lambda_{i}(x_{i})e\big(\xi_ix_i/(Y/W)\big)$ and $G_2(y)=\prod_{i=5}^8\lambda_{i}(x_{i})e\big(\xi_ix_i/(Y/W)\big)$. 
Recalling the notation \eqref{defineScal}, we deduce that
	\begin{align*}
		\sum_{r \in (\mathbb{Z}/q\mathbb{Z})^\ast}|S_{F_\xi}(\frac{r}{q})|=&\, B_q^8\sum_{r \in (\mathbb{Z}/q\mathbb{Z})^\ast}|\sum_{u,v\pmod{q}}e(\frac{r}{q}Q(u,v))f_1(u)f_2(v)|
\\ = &\, B_q^8\sum_{r \in (\mathbb{Z}/q\mathbb{Z})^\ast}|\mathcal{S}_{f_1,f_2}(q,r)|,
	\end{align*}
where $f_i(u)=\frac{1}{B_q^4}\sum_{x\equiv u\pmod{q}}G_i(x)$ satisfies $f_i(u)\ll 1$ by \eqref{divideintomodq}. By Lemma \ref{lemma52}, Proposition \ref{proposition51} and Lemma \ref{lemma53}, one has
\begin{align}\label{boundgausssum}
		\sum_{r \in (\mathbb{Z}/q\mathbb{Z})^\ast}|S_{F_\xi}(\frac{r}{q})|\ll B_q^8 \min(q^7,\ q_1^{17}q_0^{7-c_0}).
	\end{align}

Set $\mathcal{L}=(\log X)^{A_0}$. When $H\leqslant \mathcal{L}/2$, inserting \eqref{boundgausssum} into \eqref{definecalH}, we have by \eqref{sumoverq} that
\begin{align*}
\mathcal{H}(\xi)\ll \frac{Y^8H^{o(1)}}{\phi(W)^8}\sum_{H<q\leqslant 2H}\min(q^{-1},\ q_1^{11}q^{-1-c_0})\ll \frac{Y^8H^{-c_0/100+o(1)}}{\phi(W)^8}.\end{align*} Then recalling $c_1=c_0/10000$ and \eqref{defineM'}, we further have
	\begin{equation}\label{divideintoH1}
\sum_{\substack{H=2^j\\ M'/2<H\leqslant \mathcal{L}/2}}\frac{M^{17}H^{17c_1}}{(Y/W)^2}\sup_{\xi}\mathcal{H}(\xi)\ll \frac{Y^8}{(Y/W)^2\phi(W)^8}\cdot \frac{M^{17}}{M'^{\frac{c_0}{200}}}\ll \frac{Y^8}{(Y/W)^2\phi(W)^8}M^{-1}.
		\end{equation}
When $H> \mathcal{L}/2$, there is an extra factor $\frac{\phi(qW)}{qW}\log X$ in $B_q$. In this case, we have
\begin{align*}
\mathcal{H}(\xi)\ll  \frac{Y^8H^{-c_0/101}(\log X)^{8}}{\phi(W)^8}\end{align*}
and then by choosing $A_0=200(18C+9)/c_0$
 	\begin{equation}\label{divideintoH2}
\sum_{\substack{H=2^j\\ H> \mathcal{L}/2}}\frac{M^{17}H^{17c_1}}{(Y/W)^2}\sup_{\xi}\mathcal{H}(\xi)\ll \frac{Y^8}{(Y/W)^2\phi(W)^8}\cdot \frac{M^{17}(\log X)^{8+o(1)}}{\mathcal{L}^{c_0/200}}\ll \frac{Y^8}{(Y/W)^2\phi(W)^8}M^{-1}.
		\end{equation}
Now, \eqref{estimateinlemmam2} follows from \eqref{divideintoH0}, \eqref{divideintoH1} and \eqref{divideintoH2}.
\end{proof}

\begin{proof}[Proof of Proposition \ref{propminor} assuming Proposition \ref{proposition51}.] In fact, Proposition \ref{propminor} follows from Lemma \ref{m1} and Lemma \ref{m2} immediately. Note that Proposition \ref{proposition51} is used in the proof of Lemma \ref{m2}.\end{proof}

\bigskip

\section{Green's interpretation of exponential sums by Weil representation}

In this section, we review Green's interpretation of exponential sums as matrix coefficients by using the Weil representation.
	We begin with the definition for symplectic group.
	\begin{definition}
		We define the symplectic group $\Sp_8(\Z /q\Z)$ to be the group consisting of all $8 \times 8$ block matrices $g = \smallpmat{A}{B}{C}{D}$ with entries in $\Z /q\Z$ and $A^T C = C^T A$, $B^T D = D^T B$ and $A^T D - C^T B = I$.
	\end{definition}

Let $\operatorname{U}(\ell^2((\Z/q\Z)^4))$ be the group of all unitary operators on $\ell^2((\Z/q\Z)^4)$. The following statement is \cite[Proposition 7.2]{Green}, see also its remarks in \cite{Green}.

\begin{proposition}[Weil representation]\label{weil-prop}
		Let $q$ be squarefree and odd. Then there is a unitary representation \[ \rho : \Sp_8(\Z/q\Z) \rightarrow \operatorname{U}(\ell^2((\Z/q\Z)^4))\] and a function $\xi : \Sp_8(\Z/q\Z) \rightarrow \{z \in \C : |z| = 1\}$ satisfying the following:
		\begin{itemize} 
			\item \textup{(Dilations)} If $g = s(E)$ where $s(E) := \smallpmat{E}{0}{0}{E^{-T}}$ with $E$ invertible then
			\[ \rho(g) f(x) = \xi(s(E)) f(E^{-1} x);\]
			\item \textup{(Fourier transform)} If $g = J:= \smallpmat{0}{I}{-I}{0}$ then
			\[ \rho(g) f(x) =  \xi(J) q^2 \E_{y \in (\Z/q\Z)^4} f(y) e_q(x^T y);\]
			\item \textup{(Quadratic modulations)} If $g = l(W)$ where $l(W) := \smallpmat{I}{0}{W}{I}$ with $W$ symmetric then 
			\[ \rho(g) f(x) = \xi(l(W))e_q(-\frac{1}{2} x^T W x)f(x).\]
            \item \textup{(An explicit expression)} If $q=p$ is an odd prime then 
			\[ \xi(s(E))=\Big(\frac{\det E}{p}\Big).\]
		\end{itemize}
	\end{proposition}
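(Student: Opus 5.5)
The plan is to prove the statement by reducing to a single prime via the Chinese remainder theorem and then building the representation over $\F_p$ from the Heisenberg group and the Stone--von Neumann theorem.

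\emph{Reduction to primes.} Write $q=p_1\cdots p_k$ with distinct odd primes. By the Chinese remainder theorem,
\[
\Sp_8(\Z/q\Z)\cong\prod_i\Sp_8(\F_{p_i}),\qquad
\ell^2((\Z/q\Z)^4)\cong\bigotimes_i\ell^2(\F_{p_i}^4).
\]
Under the second isomorphism, $e_q(t)=\prod_i e_{p_i}(a_i t)$ with $a_i\equiv (q/p_i)^{-1}\pmod{p_i}$. So the factor at $p_i$ should be a Weil representation attached to the twisted character $\psi_i(t)=e_{p_i}(a_it)$, and $\rho$ will be the tensor product of these factors. With this choice, dilations, quadratic modulations and the Fourier transform all factor as tensor products. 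For the Fourier transform, the normalisation $q^2\E_y$ splits as $\prod_i p_i^2\E_{y_i}$. The phases $\xi$ are then defined multiplicatively. The explicit Legendre-symbol formula is only claimed for $q=p$ prime, so there $a=1$ and no twist is needed.

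\emph{Construction at a prime $p$.} Let $H=\F_p^4\times\F_p^4\times\F_p$ be the Heisenberg group, and take its Schr\"odinger representation $\pi_\psi$ on $\ell^2(\F_p^4)$ with central character $\psi$: translations in $x$, modulations by $\psi(x^Ty)$, and the centre acting by $\psi$. This is irreducible. Each $g\in\Sp_8(\F_p)$ acts on $H$ by automorphisms $(v,t)\mapsto(gv,t)$ that are trivial on the centre. Therefore $\pi_\psi\circ g$ is again irreducible with the same central character. By Stone--von Neumann there is a unitary $\rho(g)$, unique up to a scalar, with
\[
\rho(g)\pi_\psi(h)\rho(g)^{-1}=\pi_\psi(g h).
\]
For each of the three generator types I will check directly that the displayed operator satisfies this intertwining identity:
\begin{itemize}
\item for $s(E)$, the operator $f\mapsto f(E^{-1}x)$;
\item for $J$, the normalised Fourier transform;
\item for $l(W)$, multiplication by $\psi(-\tfrac12x^TWx)$.
\end{itemize}
These are short computations, and $2$ is invertible because $p$ is odd. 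Uniqueness up to scalars then gives the formulas with some unimodular $\xi$.

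\emph{Linearising.} This step is the main obstacle. The map $g\mapsto\rho(g)$ is a priori only projective, and we need a genuine representation. I would first use that $\Sp_8(\F_p)$ is perfect for $p$ odd and that its Schur multiplier is trivial in this range (Steinberg, with the exceptional small cases excluded here). Then the cocycle is a coboundary, and there is a unique normalisation of the $\rho(g)$ that makes $\rho$ a homomorphism. An alternative, more hands-on route is Gérardin's explicit Weil representation, which is linear for odd $p$; I would fall back on it if the Schur multiplier route proves awkward.

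\emph{The value $\xi(s(E))$.} The composition $E\mapsto\rho(s(E))$ is a genuine representation of $\GL_4(\F_p)$. Since $f\mapsto f(E^{-1}x)$ is already a representation, $\xi\circ s$ must be a character of $\GL_4(\F_p)$, hence of the form $\chi(\det E)$ for some character $\chi$ of $\F_p^*$. To pin $\chi$ down to the Legendre symbol, I would compare with the defining relations. The element $s(-I)$ equals $J^2$, and $J^2$ acts by $f\mapsto f(-x)$ times $\xi(J)^2$ times the square of a Gauss-sum normalisation. Combined with the relation for $l(W)J\,l(W)\dots$ that expresses $s(\mathrm{diag}(\lambda,1,1,1))$ through $J$ and quadratic modulations, the usual Gauss-sum identity $g(\lambda)=\big(\tfrac{\lambda}{p}\big)g(1)$ forces $\chi(\lambda)=\big(\tfrac{\lambda}{p}\big)$. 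I expect this verification to be the most delicate part, since it requires tracking the scalars through a Bruhat-type relation.
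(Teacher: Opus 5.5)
The paper gives no argument for this proposition. All four items are quoted from \cite[Proposition 7.2]{Green} and the remarks after it. Your route is the standard self-contained construction, and its first three stages are sound.
\begin{itemize}
\item The CRT splitting with the twisted characters $e_{p_i}(a_it)$ is correct, including the normalisation $q^{-2}\sum_y=\prod_i p_i^{-2}\sum_{y_i}$.
\item The operators $D_Ef(x)=f(E^{-1}x)$, $\mathcal F f(x)=p^{-2}\sum_y f(y)e_p(x^Ty)$ and $M_Wf(x)=e_p(-\frac12x^TWx)f(x)$ do intertwine the Schr\"odinger representation with $s(E)$, $J$, $l(W)$, in the paper's convention $g^TJg=J$.
\item Steinberg's theorem together with perfectness of $\Sp_8(\F_p)$ gives a \emph{unique} linear lift.
\end{itemize}
That uniqueness is what makes the fourth item well posed, a point the citation leaves implicit.

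The gap is in the fourth item. The relation $J^2=s(-I)$ tells you nothing. With the stated normalisation $\mathcal F^2f(x)=f(-x)$ holds exactly, with no Gauss-sum factor, and $\det(-I_4)=1$, so you only learn $\xi(J)^2=1$. More seriously, any word in $J^{\pm1}$ and the $l(W)$ that represents $s(\mathrm{diag}(\lambda,1,1,1))$ brings in the phases $\xi(l(W))$ of the linearised $\rho$. Nothing in your argument controls these, so the identity $g(\lambda)=\big(\frac{\lambda}{p}\big)g(1)$ only gives $\chi(\lambda)=\big(\frac{\lambda}{p}\big)$ up to an unknown unimodular factor.

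The missing step is to show $\xi(l(W))=1$. Since $l(W)l(W')=l(W+W')$ and $M_WM_{W'}=M_{W+W'}$, the map $W\mapsto\xi(l(W))$ is an additive character. Since $s(E)l(W)s(E)^{-1}=l(E^{-T}WE^{-1})$ and $D_EM_WD_E^{-1}=M_{E^{-T}WE^{-1}}$ exactly, taking $E=\mu I$ gives $\xi(l((\mu^{-2}-1)W))=1$. Hence the character is trivial for $p>3$; for $p=3$ use non-scalar $E$.

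Now let $E_{11}$ be the matrix unit, $u(a)=J^{-1}l(-aE_{11})J=\smallpmat{I}{aE_{11}}{0}{I}$ and $w(a)=u(a)\,l(-a^{-1}E_{11})\,u(a)$. Then:
\begin{itemize}
\item $s(\mathrm{diag}(\lambda,1,1,1))=w(\lambda)w(1)^{-1}$, and the factors $\xi(J)^{\pm1}$ cancel.
\item A one-variable Gauss-sum computation on the first tensor factor of $\ell^2(\F_p)^{\otimes 4}$ shows that $\rho(w(a))$ acts there as $p^{-1/2}\big(\frac{2a}{p}\big)\sum_{x\in\F_p}e_p(x^2)$ times $\mathcal F_1D_{a^{-1}}$. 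Here $\mathcal F_1f(x)=p^{-1/2}\sum_y f(y)e_p(xy)$.
\item Since $\mathcal F_1D_{\lambda^{-1}}\mathcal F_1^{-1}=D_\lambda$, it follows that $\chi(\lambda)=\big(\frac{\lambda}{p}\big)$.
\end{itemize}
Independently, $Js(E)J^{-1}=s(E^{-T})$ and $\mathcal FD_E\mathcal F^{-1}=D_{E^{-T}}$ give $\chi^2=1$ at once. So the Gauss sum is only needed to rule out $\chi=1$.
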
 

Throughout this paper, the letter $\rho$ denotes the Weil representation in Proposition \ref{weil-prop}.

\begin{definition}[Symplectic element]\label{symplectic-element}
		Suppose that $Q(x,y) = x^T a x + x^T b y + y^T c y$ is a quadratic form and that $\det b \neq 0$. Then we associate to $Q$ the element $g = g(Q) \in \Sp_8(\Q)$ defined by
		\begin{equation}\label{sym}
			g :=  \pmat{-2 b^{-T} c}{b^{-T}}{4ab^{-T} c - b}{-2a b^{-T}}.
		\end{equation}  
		We call this the \emph{symplectic element} associated to $Q$.
	\end{definition}

For $h = \smallpmat{\alpha}{\beta}{\gamma}{\delta}$ and $r \in (\Z/q\Z)^*$, we define the dilate of $h$ by 
	\begin{equation}\label{gr-def}h^{(r)} = \pmat{\alpha}{r^{-1} \beta}{r\gamma}{\delta}.\end{equation} 
Note that when $(q, \det b)=1$, if we view $g\in \Sp_8(\Z/q\Z)$ then
$$g^{(r)}\in \Sp_8(\Z/q\Z) \ \textrm{for all}\ r \in (\Z/q\Z)^*.$$	
Green introduced the following group, which plays an important role in the proof.
	\begin{definition}\label{gamma-def} Suppose that $q$ is odd, squarefree and coprime to $\det b$. Let $\Gamma_q \leqslant \Sp_8(\Z/q\Z)$ be the group generated by the elements $g^{-(r)} g^{(s)}$, $r, s \in (\Z/q\Z)^*$.
	\end{definition}
	
	$\Gamma_q$ behaves nicely under projection. 
	\begin{lemma}[Proposition9.4 \cite{Green}]\label{gam-prod}
		Suppose that $q$ is squarefree, and let
        $$\pi : \Sp_8(\Z/q\Z) \rightarrow \prod_{p | q}\Sp_8(\Z/p\Z)$$
        be the natural isomorphism. Then $\pi(\Gamma_q) = \prod_{p | q} \Gamma_p$.
	\end{lemma}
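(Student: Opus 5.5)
Since $q$ is squarefree, the Chinese Remainder Theorem gives a ring isomorphism $\Z/q\Z\cong\prod_{p\mid q}\Z/p\Z$. It induces the isomorphism $\pi:\Sp_8(\Z/q\Z)\to\prod_{p\mid q}\Sp_8(\Z/p\Z)$, $h\mapsto(h \bmod p)_{p\mid q}$, because the defining relations $A^TC=C^TA$, $B^TD=D^TB$, $A^TD-C^TB=I$ hold modulo $q$ exactly when they hold modulo every $p\mid q$. It also gives $(\Z/q\Z)^*\cong\prod_{p\mid q}(\Z/p\Z)^*$, $r\mapsto(r_p)_p$ with $r_p=r \bmod p$. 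The plan is to show the two inclusions $\pi(\Gamma_q)\subseteq\prod_p\Gamma_p$ and $\prod_p\Gamma_p\subseteq\pi(\Gamma_q)$, in that order.

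First, reduction commutes with dilation. The entries of $g^{(r)}$ are those of $g$ multiplied by $1$, $r^{-1}$ or $r$, and reduction mod $p$ is a ring homomorphism. Hence $g^{(r)} \bmod p=(g\bmod p)^{(r_p)}$, and the same holds for inverses. Therefore
\[\pi\big(g^{-(r)}g^{(s)}\big)=\big((g\bmod p)^{-(r_p)}(g\bmod p)^{(s_p)}\big)_{p\mid q},\]
so each generator of $\Gamma_q$ maps into $\prod_p\Gamma_p$. Since $\pi$ is a homomorphism, $\pi(\Gamma_q)\subseteq\prod_p\Gamma_p$.

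For the reverse inclusion, it suffices to show that $\pi(\Gamma_q)$ contains, for each prime $p_1\mid q$, every element of the form $(1,\dots,1,\gamma,1,\dots,1)$ with $\gamma=g^{-(r')}g^{(s')}\in\Gamma_{p_1}$ in the $p_1$-slot; these elements generate $\prod_p\Gamma_p$. By CRT, choose $r,s\in(\Z/q\Z)^*$ with $r\equiv r'$, $s\equiv s' \pmod{p_1}$ and $r\equiv s\equiv 1\pmod{p}$ for all other primes $p\mid q$. Then $g^{-(r)}g^{(s)}$ has $p_1$-component $\gamma$ and $p$-component $g^{-(1)}g^{(1)}=I$ for every other $p$. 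So the elementary tuple lies in $\pi(\Gamma_q)$. Products of these tuples over the various $p_1$ exhaust $\prod_p\Gamma_p$, which gives equality.

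No step is a genuine obstacle. The only points needing care are that $r^{-1}$ reduces to $r_p^{-1}$, which is automatic because $r$ is a unit, and that $g$ is defined mod $q$, which holds since $(q,\det b)=1$ makes $b^{-T}$ well defined mod $q$ and mod each $p\mid q$.
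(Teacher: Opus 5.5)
Your proof is correct and uses the same Chinese Remainder Theorem argument as Green's Proposition 9.4, which the paper cites without reproducing: reduction mod $p$ commutes with the dilations $h\mapsto h^{(r)}$, giving $\pi(\Gamma_q)\subseteq\prod_{p\mid q}\Gamma_p$, and taking $r\equiv s\equiv 1\pmod{p}$ at every prime $p\neq p_1$ makes $g^{-(r)}g^{(s)}$ trivial away from $p_1$, so each factor $\Gamma_{p_1}$ lies in $\pi(\Gamma_q)$. This is exactly why $\Gamma_q$ is generated by the products $g^{-(r)}g^{(s)}$, which equal the identity when $r=s$, rather than by the elements $g^{(r)}$ themselves.
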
 

After the interpretation of exponential sums by using Weil representation, to prove Proposition \ref{proposition51}, it suffices to prove the following.
	\begin{proposition}\label{quasirandom-gamma}
		Let $Q$ be a pan-generic quadratic form. Then there is some $p_0(Q)$ such that the following is true. Suppose that $q$ is squarefree and has all prime factors greater than $p_0(Q)$. Then $\rho|_{\Gamma_q}$ splits into irreducible subrepresentations of dimensions $\geqslant q^{1 - o(1)}$. 
	\end{proposition}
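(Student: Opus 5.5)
Following Green, the plan has three steps: describe $\Gamma_p$ for a single large prime $p$, use the known decomposition of the Weil representation restricted to it, and then pass to squarefree $q$ via Lemma \ref{gam-prod}.

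\emph{Step 1: $\Gamma_p$ for $p$ prime.} A direct computation with \eqref{sym} and \eqref{gr-def} gives
\[ g^{-(r)}g^{(s)}=\pmat{\alpha_{r,s}}{\beta_{r,s}}{\gamma_{r,s}}{\delta_{r,s}}, \]
whose blocks are polynomials in $r^{\pm1},s^{\pm1}$. The $\Delta=4b^{-1}ab^{-T}c-I_4$ structure enters through the product $g^{-1}$-type terms. I expect, as in Green's Section 9, that $\Gamma_p$ contains a large unipotent subgroup $\{l(W): W\in V\}$ or $\{\smallpmat{I}{W}{0}{I}: W\in V'\}$ for some $\mathbb{F}_p$-subspace $V$ of symmetric matrices, together with the torus $\{s(E)\}$ for $E$ in the centralizer of $\Delta$ (suitably conjugated). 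When $\Delta$ has four distinct eigenvalues, this centralizer is a maximal torus $T(\mathbb F_p)$; over an extension field it splits as $\prod_i \mathbb{F}_{p^{k_i}}^*$.

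Green's hypothesis excluded eigenvalues $0$ and $-1$, and the point here is that this was only used to ensure certain elements are regular. Under \textbf{(PG)}, one of the eigenvalues may be $0$ or $-1$. In that case I would still extract the torus and enough unipotents that the torus acts on $V$ with characters $\chi$ satisfying $\chi\ne 1$. The eigenvalue $-1$ corresponds to $\det$ of the full form being degenerate, i.e.\ rank $7$.

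\emph{Step 2: irreducible constituents.} The Weil representation restricted to the unipotent group $\{l(W):W\in V\}$ is diagonal: the function $\delta_x$ has character $W\mapsto e_p(-\tfrac12 x^TWx)$. A $\Gamma_p$-invariant subspace is therefore spanned by sums over orbits of this torus-unipotent structure. Any nonzero vector involves some nontrivial character $\psi_x$ of $V$, where $x\neq0$ and $x^TWx\not\equiv0$ for some $W\in V$. The torus then moves $\psi_x$ to $\psi_{Ex}$ for $E\in T(\mathbb F_p)$, and distinct images give orthogonal eigenvectors, so the dimension is at least the orbit size. Since the orbit of a vector with nonzero components in the eigenspaces has size $\gg p^{k}$ for the number $k$ of nonzero components, I get dimension $\ge p^{1-o(1)}$ (in fact $\gg p$).

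The remaining subcase is vectors supported only on $x$ with all $\psi_x$ trivial, i.e.\ $x$ in the null part $\{x: x^TWx=0\ \forall W\in V\}$. I would then use the Fourier element $J$, or conjugates, in $\Gamma_p$ to move these vectors. Pan-genericity should guarantee that no nonzero vector is fixed except possibly the constant/delta function at $0$. Handling that exceptional line needs care: either show it is not invariant, or observe that the statement asks only for "dimensions $\ge q^{1-o(1)}$" of constituents actually occurring. Presumably $\delta_0$ is not $\Gamma_p$-invariant because of the $J$-type elements.

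\emph{Step 3: squarefree $q$.} By Lemma \ref{gam-prod}, $\Gamma_q\cong\prod_{p\mid q}\Gamma_p$, and $\rho$ factors as the tensor product $\bigotimes_p\rho_p$ on $\ell^2((\mathbb Z/q)^4)=\bigotimes_p \ell^2(\mathbb F_p^4)$. Irreducibles of a product group are tensor products of irreducibles, so every constituent has dimension $\prod_p d_p\ge\prod_{p\mid q} cp\ge q\,c^{\omega(q)}=q^{1-o(1)}$.

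The main obstacle is Step 1 in the degenerate eigenvalue cases $0$ and $-1$. There I would try to recompute Green's generators explicitly in a basis diagonalizing $\Delta$ over $\overline{\mathbb F}_p$, and check that the commutators still produce unipotents with nontrivial torus weights. Choosing $p_0(Q)$ large ensures that the eigenvalues stay distinct mod $p$ and that $p\nmid\det ab$.
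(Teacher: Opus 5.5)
\textbf{There is a genuine gap in the prime case, in both Step 1 and Step 2.}

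\emph{Step 3.} Your Step 3 is the same reduction the paper borrows from Green. You should note that the local factors are the twisted representations $\tilde\rho(x)=\rho(x^{(r)})$. This is harmless, because the dilation maps $\Gamma_p$ onto itself.

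\emph{Step 1.} This is only an expectation, and it is not what happens in the degenerate case. After conjugating by $\tau$ and applying $\Phi$, the generators become $(M_{\alpha_i}(r,s))_i$. The factor for the eigenvalue $0$ is the unipotent group $U_p$, and the factor for $-1$ is the Borel $R_p$ (Lemma~\ref{baby case}). One then needs Goursat's lemma (Lemmas~\ref{Ribet2}, \ref{Ribet3}, \ref{Ribet1}) to prove $\Phi(\tau\Gamma_p\tau^{-1})=\prod_i M_{\alpha_i}$, i.e.\ that the factors $\SL_2(\F_p(\alpha_j))$, $j\in\mathcal J$, are not glued to the solvable ones. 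Two consequences:
\begin{itemize}
\item When $0$ is an eigenvalue, a dilation $s(E)$ with $E$ in the centralizer $\F_p[\Delta]$ lies in $\Gamma_p$ only if $E=F(\Delta)$ with $F(0)=1$. So your ``torus of the centralizer'' is not in $\Gamma_p$.
\item $\Gamma_p$ has nontrivial abelian quotients, so it is not quasirandom. Small constituents can only be excluded by using the specific representation $\rho$. This is more than Green's hypothesis ``ensuring regularity''.
\end{itemize}
Also, it is the eigenvalue $0$, not $-1$, that corresponds to $\mathrm{rank}(Q)=7$; the eigenvalue $-1$ corresponds to $\det c=0$.

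\emph{Step 2, the ``remaining subcase''.} This is exactly where the new difficulty lies, and you give no argument for it. Every lower unipotent $l(S)\in\Gamma_p$ has $S=CH(\Delta)$ for a polynomial $H$ with $H(-1)=0$ when $-1$ is an eigenvalue. Hence all the characters $x\mapsto e_p(-\tfrac12x^TSx)$ are trivial on the whole subspace $\oplus_{i\in\mathcal I}W_i$ (the $0$ and $-1$ eigenspaces). So:
\begin{itemize}
\item The exceptional space is $\ell^2(\oplus_{i\in\mathcal I}W_i)$, of dimension $p$ or $p^2$, not the line through $\delta_0$.
\item $J$ is not in $\Gamma_p$ in general.
\item Appealing to ``constituents actually occurring'' does not help, since an invariant vector there would itself be a one-dimensional constituent.
\end{itemize}

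\emph{The missing ingredient.} The paper passes to the subgroup $\Theta_p\le\Gamma_p$, isomorphic to $\prod_{j\in\mathcal J}\SL_2(\F_p(\alpha_j))$, and shows that $\rho|_{\Theta_p}$ has no nonzero invariant vector.
\begin{itemize}
\item Claim~1: the quadratic modulations $l(CN)$, $N\in Y_p$, together with the dilations $s(P)$, $P\in X_p$, force the support of an invariant $f$ into $\oplus_{\mathcal I}W_i$.
\item Claim~2: the Legendre-symbol factor of the Weil representation kills what remains. Lemma~\ref{quad-non} gives $P_0\in X_p$ acting trivially on $\oplus_{\mathcal I}W_i$ with $\xi(s(P_0))=\big(\frac{\det P_0}{p}\big)=-1$, which forces $f=-f$ there.
\end{itemize}
Every irreducible constituent of $\rho|_{\Gamma_p}$ then restricts nontrivially to $\Theta_p$. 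By Lemma~\ref{sl2-facts} it has dimension $\ge (p-1)/2$, with no orbit counting needed.

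Your orbit argument for constituents meeting a nontrivial character is reasonable once Step 1 is actually proved. Without a treatment of $\ell^2(\oplus_{\mathcal I}W_i)$, either this one or a substitute, the proposal does not prove the statement.
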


The proof of Proposition \ref{proposition51} from Proposition \ref{quasirandom-gamma} can be found in \cite[Sections 7-8]{Green}. We briefly explain the proof for completeness. By \cite[Proposition 7.4]{Green}, we have
  $$|T_{f_1,f_2}(r)|=|\langle \overline{f_1}, \rho(g^{(r)}) f_2\rangle|.$$ 
  Then \eqref{zq-main-ineq} is equivalent to
  \begin{equation}\label{zq-main-ineq rewrite}
			\int  |\langle f_1, \rho(x) f_2\rangle| d\mu_q(x) \ll q^{-c_0} \Vert f_1 \Vert_2 \Vert f_2 \Vert_2,
		\end{equation} 
where $\mu_q$ is a probability measure on $\Sp_8(\Z/q\Z)$, which puts weight $\frac{1}{\phi(q)}$ on each of the points $g^{(r)}$, $r \in (\Z/q\Z)^*$. According to \cite[Proposition 8.2]{Green}, the inequality \eqref{zq-main-ineq rewrite} can be obtained from the almost uniform distribution of convolution powers and a quasirandomness result. Note that \cite[Proposition 8.4]{Green} provides the almost uniform distribution of convolution powers. We remark that \cite[Proposition 8.4]{Green} holds for all quadratic forms $Q$. However, the quasirandomness was proved in \cite{Green} under the condition that $Q$ is generic (see \cite[Proposition 8.5]{Green}). Now Proposition \ref{quasirandom-gamma} provides the quasirandomness. This proves Proposition \ref{proposition51} (subject to the truth of Proposition \ref{quasirandom-gamma}).

As in \cite{Green}, Proposition \ref{quasirandom-gamma} can be reduced to the prime modulus case. The details can be found in \cite[p. 1628]{Green}. Thus, the main task in this paper is to prove the following.
	\begin{proposition}\label{key-nontrivial}
		Suppose that $Q$ is pan-generic and that $p$ is sufficiently large in terms of $Q$. Let $\rho : \Sp_8(\Z/p\Z)  \rightarrow \operatorname{U}(\ell^2((\Z/p\Z)^4))$ be the mod $p$ Weil representation. Let $r \in (\Z/p\Z)^*$, and let $\tilde \rho$ be the twist of $\rho$ by dilation by $r$, that is to say $\tilde\rho(x) = \rho(x^{(r)})$. Then $\tilde \rho|_{\Gamma_p}$ splits into irreducible subrepresentations of dimension $\geqslant (p-1)/2$.
	\end{proposition}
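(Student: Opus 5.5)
We will follow the strategy of Green's proof of his Proposition 8.5, but replace the one place where the eigenvalues are assumed to avoid $\{-1,0\}$. First we compute the generators of $\Gamma_p$. Since $g^{(r)} = s(r^{-1/2})\, g\, s(r^{1/2})$ formally, with $s(\lambda)$ the scalar dilation $\mathrm{diag}(\lambda I,\lambda^{-1}I)$, we work at the level of the conjugates directly. A direct computation from \eqref{sym} shows that $g^{-(r)}g^{(s)}$ is block lower/upper unipotent-times-diagonal. More precisely, Green shows that $\Gamma_p$ contains the elements
\[
u(t)=\pmat{I}{0}{tN}{I},\qquad v(t)=\pmat{I}{tN'}{0}{I}
\]
for suitable fixed matrices $N,N'$ built from $a,b,c$ (essentially $N$ symmetric with $N\sim b^{-T}c$-type and $N'\sim ab^{-T}$-type), together with dilations $s(E)$ where $E$ ranges over a torus generated by $\Delta$. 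We will redo this computation and record that $\Gamma_p$ contains the one-parameter unipotent groups $l(tW_0)$ for a fixed nonzero symmetric $W_0$ (mod $p$, for $p$ large), and their conjugates by a subgroup containing $\{s(E_t)\}$ with $E_t$ polynomial in $\Delta$.

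Next, the representation side. The quadratic modulations $l(tW_0)$ act on $\ell^2((\Z/p\Z)^4)$ by multiplication by $e_p(-\tfrac t2 x^TW_0x)$ (up to the scalar $\xi$, which the twist by $r$ only rescales). Any $\Gamma_p$-invariant subspace $V$ is therefore invariant under these multiplication operators. Hence $V$ decomposes along the level sets of the quadratic function $x\mapsto x^TW_0x$, and similarly along conjugate forms $x\mapsto x^TE^TW_0Ex$. The dilations $s(E)$ permute these. The plan is to show that the characters $e_p(x^TWx)$ for $W$ ranging over the $\Gamma_p$-orbit of $W_0$ span a space of quadratic forms large enough to separate points of $(\Z/p\Z)^4$ modulo $\pm1$. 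Then, together with the Fourier transform element (or its image in $\Gamma_p$), this forces any nonzero invariant $V$ to have dimension $\geqslant (p-1)/2$. This is exactly the size of the smaller of the two pieces (even/odd functions) of the Weil representation, and it is the dimension bound in Proposition \ref{key-nontrivial}. Concretely, we will show that $V$ contains a delta-function supported on $\{\pm x\}$ for some $x\neq0$, or else the constant-like vector, and then use the unipotent and dilation elements to move this around an orbit of size $\gg p$.

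The main obstacle is the spanning step. The span of $\{E_t^TW_0E_t\}$ is controlled by the eigenstructure of $\Delta$: diagonalizing $\Delta$ over $\overline{\Q}$ with distinct eigenvalues $\mu_1,\dots,\mu_4$, the torus $\{E_t\}$ acts diagonally with characters depending on $\mu_i$. Green needed $\mu_i\notin\{-1,0\}$ to make certain combinations like $\mu_i(\mu_i+1)$ nonzero. Under (PG) alone, at most one eigenvalue can be $0$ and at most one can be $-1$. In those cases we will redo the computation and check that a degenerate direction is still recovered, either via a different product $g^{-(r)}g^{(s)}$ with $r,s$ varied or via the rank $\geqslant7$ of $Q$. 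If a direction is genuinely lost, the span fails only in a subspace of dimension $\leqslant 1$ or $2$. We then try to show that the invariant subspaces can only split off pieces like functions of the remaining coordinates, which still have dimension $\geqslant p$ or, after the $\pm$ symmetry, $\geqslant (p-1)/2$. Distinctness of the eigenvalues, which holds modulo $p$ for large $p$, guarantees that the Vandermonde-type determinants appearing are nonzero mod $p$. This is where $p_0(Q)$ enters.
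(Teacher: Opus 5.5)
Your proposal has a genuine gap, located exactly where the pan-generic case differs from Green's: the separation step is false.

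\textbf{The separation step fails.} By \eqref{phi}, $\Phi(\tau\Gamma_p\tau^{-1})\subseteq\prod_i M_{\alpha_i}$. Hence every quadratic modulation $l(W)\in\Gamma_p$ has $W=B^{-1}G(\Delta)$ for some polynomial $G$. Moreover $G(0)=0$ (resp.\ $G(-1)=0$) is forced when $0$ (resp.\ $-1$) is an eigenvalue, because $M_0=U_p$ and $M_{-1}=R_p$ are upper triangular.

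Since $\Delta$ is self-adjoint for the form $B^{-1}$, writing $x=\sum_i w_i$ with $w_i\in W_i$ gives $x^TWx=\sum_i w_i^TWw_i$, and $Ww_i=0$ on the degenerate eigenspaces. So the level-set functions available to you have two defects:
\begin{itemize}
\item they are invariant under independent sign changes $w_i\mapsto\pm w_i$;
\item they are completely blind to the components in $\ker\Delta\oplus\ker(\Delta+I)$.
\end{itemize}
In the relevant case there are at least two irreducible factors, so these functions do not separate points modulo $\pm1$.

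Likewise, every dilation $s(F(\Delta))\in\Gamma_p$ must have $F(0)=1$, so it fixes $\ker\Delta$ pointwise. Your orbit-moving step therefore cannot manufacture $\gg p$ independent vectors from something concentrated over the degenerate subspace. You also cannot fall back on a Fourier element there: the $\alpha=0$ component of every element of $\Gamma_p$ lies in $U_p$.

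Your fallback (``try to show the invariant subspaces only split off functions of the remaining coordinates'') is not an argument. You also never establish which dilations and modulations actually lie in $\Gamma_p$. That needs the computation $M_0=U_p$, $M_{-1}=R_p$ (Lemma \ref{baby case}) and the Goursat-type Lemmas \ref{Ribet2}, \ref{Ribet3}, \ref{gamma_p}.

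\textbf{What the paper does instead.} It passes to the subgroup
$\Theta_p=\tau^{-1}\Phi^{-1}\big(\prod_{i\in\mathcal I}\{I\}\times\prod_{j\in\mathcal J}\SL_2(\F_p(\alpha_j))\big)\tau\leqslant\Gamma_p$, and argues as follows.
\begin{itemize}
\item $\Theta_p$ is stable under the $r$-dilation (Lemma \ref{invariant}), so the twist can be dropped.
\item $\Theta_p$ is a product of $\SL_2$'s, so by Lemma \ref{sl2-facts} any constituent on which it acts nontrivially has dimension $\geqslant(p-1)/2$. It therefore suffices to show that $\rho|_{\Theta_p}$ has no nonzero invariant vector.
\item Green's support argument confines an invariant $f$ to $\oplus_{i\in\mathcal I}W_i$. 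It uses $l(CN)$ with $N=\Delta(\Delta+I)\in Y_p$, together with dilations $s(P)$, $P\in X_p$, that rescale one $W_j$-component.
\item On $\oplus_{i\in\mathcal I}W_i$, the scalar you dismissed does the work. By Lemma \ref{quad-non} there is $P_0\in X_p$ with $\det P_0$ a non-residue. It acts trivially on $\oplus_{i\in\mathcal I}W_i$, while $\xi(s(P_0))=\big(\frac{\det P_0}{p}\big)=-1$, so $f=-f$ there.
\end{itemize}
Neither the reduction to $\Theta_p$ via $\SL_2$-quasirandomness nor this Legendre-symbol mechanism appears in your plan. Without something of this kind, nothing rules out small invariant subspaces living over the degenerate directions.
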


\bigskip

\section{Identifying ${\Gamma}_p$ }

In this section, we study the structure of the group $\Gamma_q$.
	
Following Green, we consider the ``block DUL factorisation'' of $g$ 
	\begin{equation}\label{dul-fact} g = \pmat{A}{0}{0}{A^{-T}} \pmat{I}{B}{0}{I} \pmat{I}{0}{C}{I},\end{equation} 
where $A=-(2ab^{-T})^{-T}$ and 
	\begin{equation} \label{BC} B = -2b^{-1} a b^{-T}, \quad C = -2c + \frac{1}{2} b^T a^{-1} b.\end{equation}
It is well-defined since $\det ab\neq 0$. Moreover,
$$BC = \Delta.$$ 
	So that for $g^{-(r)}g^{(s)}$ the generators of $\Gamma_p$, we have 
	\begin{equation}\label{grs} g^{-(r)}g^{(s)} = \pmat{I}{0}{-rC}{I} \pmat{I}{-r^{-1} B}{0}{I} \pmat{I}{s^{-1} B}{0}{I} \pmat{I}{0}{sC}{I}.\end{equation}
	Now set \begin{equation}\label{tau-def}\tau := \pmat{I}{0}{0}{B} \in \GL_8(\Q).\end{equation} 
	Note that $\det(B)\not=0$, so $\tau$ is invertible over $\F_p$ for $p\nmid \det(B)$ and  may be considered as an element of $\GL_8(\Z/p\Z)$. Now observe that for $\lambda \in \F_p^*$ we have
	\begin{equation}\label{ul-conjs} \pmat{I}{\lambda B}{0}{I} = \tau^{-1} \pmat{I}{\lambda I}{0}{I} \tau, \quad \pmat{I}{0}{\lambda C}{I} = \tau^{-1} \pmat{I}{0}{\lambda \Delta}{I} \tau.\end{equation}
	Hence, we have 
	\begin{equation}
		g^{-(r)}g^{(s)}=\tau ^{-1}\pmat{I}{0}{-r \Delta}{I}\pmat{I}{-r^{-1} I}{0}{I}\pmat{I}{s^{-1} I}{0}{I} \pmat{I}{0}{s \Delta}{I} \tau.
	\end{equation}
	Therefore, the group $\Gamma_{p}$, generated by $g^{-(r)}g^{(s)}$, is a subgroup of $ \tau ^{-1}\SL_2(\F_p[\Delta])\tau$.

	\begin{lemma}\label{baby case}
		Suppose that $p \geqslant 5$ and that $\theta \in \overline{\F}_p $. We define $M_\theta$ to be the subgroup of $\SL_2(\F_p(\theta))$ generated by the matrices 
		\[ M_{\theta}(r,s) =  \pmat{1}{0}{-r\theta}{1} \pmat{1}{-r^{-1}}{0}{1}\pmat{1}{s^{-1}}{0}{1}\pmat{1}{0}{s\theta}{1} ,\] as $r,s$ range over $\F_p^*$. Then 
		
(i) For $\theta \in \F_p\setminus\{-1,0\}$, we have  $M_\theta=\SL_2(\F_p(\theta))$.

(ii) For $\theta=0$, we have 
			\begin{equation}\label{Up}M_0=U_p:=\{A=\smallpmat{1}{s}{0}{1} \,|\, s\in \mathbb{F}_{p}\}.\end{equation}

(iii) For $\theta=-1$, we have 
			\begin{equation}\label{Rp}M_{-1}=R_p:= \{A\in \mathrm{SL}_2(\mathbb{F}_{p}) \mid A \text{ is upper triangular}\}.\end{equation}
	\end{lemma}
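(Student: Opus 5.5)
The plan is to compute the generators $M_\theta(r,s)$ in closed form and read off each case from the resulting matrix. Write $L(u)=\smallpmat{1}{0}{u}{1}$, $U(u)=\smallpmat{1}{u}{0}{1}$ and $t:=s^{-1}-r^{-1}=(r-s)/(rs)$. Since the two middle factors combine to $U(t)$, we have $M_\theta(r,s)=L(-r\theta)U(t)L(s\theta)$. Multiplying out gives
\[ M_\theta(r,s)=\pmat{1+ts\theta}{t}{s\theta-r\theta(1+ts\theta)}{1-rt\theta}, \qquad \operatorname{tr}M_\theta(r,s)=2-\theta\frac{(r-s)^2}{rs}. \]
All three parts will be derived from this formula.

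For (ii), set $\theta=0$. Then $M_0(r,s)=U(t)$, and as $r,s$ range over $\F_p^*$ the quantity $t=s^{-1}-r^{-1}$ takes every value in $\F_p$, so $M_0=U_p$.

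For (iii), set $\theta=-1$. Using $rts=r-s$, the lower-left entry is $r-rts-s=0$ and the diagonal is $(s/r,\,r/s)$, so every generator lies in $R_p$. To get equality, fix $a\neq 1$ in $\F_p^*$ and take $r=s/a$. This produces the elements $\smallpmat{a}{(1-a)/s}{0}{a^{-1}}$, whose upper-right entries run over all of $\F_p^*$ as $s$ varies. The quotient of two such elements with the same $a$ is a nontrivial element of $U_p$. Since $U_p$ is cyclic of prime order, we get $U_p\subseteq M_{-1}$. Multiplying the generators by elements of $U_p$ then yields every diagonal matrix $\operatorname{diag}(a,a^{-1})$, hence $M_{-1}=R_p$.

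For (i), take $\theta\in\F_p\setminus\{-1,0\}$, so $M_\theta\leqslant\SL_2(\F_p)$. The approach is to apply Dickson's classification: a proper subgroup of $\SL_2(\F_p)$ either lies in a Borel subgroup (it fixes a line), or lies in the normaliser of a split or non-split torus (order dividing $2(p\pm1)$, where every element outside the torus has trace $0$), or is one of the exceptional groups of order at most $120$. The steps, in order, are as follows.

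First, rule out a common fixed line. The generators have lower-left entry $\theta(s-r-rts\theta)=\theta(s-r)(1+s\theta-r\theta)/\ldots$, which is generically nonzero, so not all generators are upper triangular. Similarly, the generators with $s=2r$ (or another convenient choice) have no common eigenvector: their fixed lines depend nontrivially on the parameter, so no single line is fixed by all of them. This is exactly where $\theta\neq0,-1$ is used, since the computations in (ii) and (iii) show those are the degenerate values.

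Second, rule out the torus normaliser. The trace $2-\theta(r-s)^2/(rs)$ takes at least roughly $(p-1)/2$ distinct values as $(r,s)$ varies. However, a subgroup contained in a torus normaliser $N$ but not in the torus itself would force many generators to have trace $0$, or else all the generators would commute. We exclude this by exhibiting two generators that do not commute, which can be checked via a commutator computation on explicit choices such as $(r,s)=(1,2)$ and $(1,3)$.

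Third, exclude the exceptional groups. The elements of an exceptional group have order at most $6$, so their traces lie in a set of bounded size, contradicting the many distinct traces found above. This works once $p$ is moderately large; the finitely many small primes $p\geqslant5$ would be checked directly, if needed, by a short computation.

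I expect the main obstacle to be making the non-reducibility and non-dihedral checks clean and uniform in $\theta$ and $p$, including the small primes allowed by the hypothesis $p\geqslant5$. As a fallback, I would try to produce an explicit nontrivial unipotent element in $M_\theta$ via a product of two generators with trace exactly $2$. Together with the fact that $M_\theta$ is not contained in a Borel subgroup, this gives two distinct Sylow $p$-subgroups, and those two Sylow $p$-subgroups generate $\SL_2(\F_p)$.
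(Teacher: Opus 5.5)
\textbf{Parts (ii) and (iii).} These are correct. In (iii) you generate $R_p$ explicitly: a quotient of two generators with equal diagonal gives $U_p$, and then you get the diagonal torus. The paper instead counts $(p-1)(p-2)>|R_p|/2$ elements and applies Lagrange. Both routes work.

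\textbf{Part (i): the small-prime check cannot succeed.} The paper does not prove (i); it quotes Green's Proposition 10.5. Your Dickson argument is an outline rather than a proof. Worse, the part you defer (``small primes checked directly'') cannot succeed, because (i) is false for $p=5$. Since $M_\theta(r,s)=M_\theta(1,r)^{-1}M_\theta(1,s)$, the group $M_\theta$ is generated by the $M_\theta(1,s)$. For $p=5$, $\theta=2$ these are $A=\smallpmat{4}{2}{1}{2}$, $B=\smallpmat{2}{1}{2}{4}$ and $C=\smallpmat{0}{3}{3}{0}$. Put $D=AB^{-1}=\operatorname{diag}(2,3)$. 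Then $C^2=D^2=-I$ and $DC=-CD$, so $\langle C,D\rangle\cong Q_8$. Moreover $ADA^{-1}=C$, $ACA^{-1}=-CD$, $A^3=-I$ and $B=D^{-1}A$. Hence $M_2=\langle C,D\rangle\langle A\rangle$ has order $24$ (it is $2A_4$), not $120=|\SL_2(\F_5)|$. So the lemma should assume $p\geqslant 7$. This is harmless for the paper, which only applies it for $p$ large.

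\textbf{Repairing the three steps for $p\geqslant7$.} Your route can be completed there, but every step needs work.

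Step 1. The lower-left entry is actually $\theta(1+\theta)(s-r)$; this is where $\theta\neq0,-1$ enters. The absence of a common fixed line is only asserted, not shown. A clean argument: conjugation by $\operatorname{diag}(1,r)$ sends $M_\theta(a,b)$ to $M_\theta(ra,rb)$, so it normalises $M_\theta$ and permutes the lines $M_\theta$ fixes. For $r$ a primitive root, $\operatorname{diag}(1,r)$ fixes only the coordinate axes and permutes the other $p-1\geqslant4$ lines cyclically. Neither axis is fixed by $M_\theta$, since $t\neq0$ and $\theta(1+\theta)(s-r)\neq0$ for $r\neq s$. Three fixed lines would make $M_\theta$ scalar, which it is not.

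Step 2. Non-commutation alone proves nothing, because torus normalisers are non-abelian. You need two non-commuting generators of nonzero trace. Your fixed pair fails when $\theta=4$, since $\operatorname{tr}M_\theta(1,2)=2-\theta/2$. A uniform fix: $M_\theta(1,s)=I+(s-1)(P+s^{-1}R)$ with $P=\smallpmat{-\theta}{0}{\theta(1+\theta)}{0}$, $R=\smallpmat{0}{-1}{0}{\theta}$ and $[P,R]\neq0$. So $M_\theta(1,s)$ and $M_\theta(1,s')$ never commute for distinct $s,s'\neq1$, and at most two values of $s$ give trace $0$.

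Step 3. The claim that exceptional elements have order at most $6$ is false: $2S_4$ and $2A_5$ contain elements of orders $8$ and $10$. Trace counting also fails for small $p$; at $p=7$ the traces of $2S_4$ already exhaust $\F_7$. Instead, note that the image of $\operatorname{diag}(1,r)$ in $\mathrm{PGL}_2(\F_p)$ has order $p-1\geqslant6$ and normalises the image of $M_\theta$. The normalisers of $A_4$, $S_4$, $A_5$ in $\mathrm{PGL}_2(\F_p)$ have no elements of order above $5$, which is exactly what breaks down at $p=5$.

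Finally, the paper later applies (i) with $\theta=\alpha_j\notin\F_p$, which an argument confined to $\SL_2(\F_p)$ does not reach.
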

	\begin{proof}
		It is obvious that $M_\theta$ is a subgroup of $\SL_2(\F_p(\theta))$.
	
(i) This is \cite[Proposition 10.5]{Green}.

(ii) This is obvious.

(iii) Direct computation gives, for any $r,s\in\mathbb F_p^*$, $M_{-1}(r,s)=\smallpmat{s/r}{(r-s)/rs}{0}{r/s}$, so $M_{-1}\subseteq R_p$. Then we put $a=s/r$, so $s=ar$. For $a\neq 1$, the entry $b:=(r-s)/rs=(1-a)/(ar)$ runs through all nonzero values as $r$ varies over $\mathbb F_p^*$. Hence $M_{-1}$ contains every matrix of the form $\smallpmat{a}{b}{0}{a^{-1}}$ where $a\neq 1$ and $b\neq 0$. Thus $\frac{1}{2}(p-1)^2<(p-1)(p-2)\leqslant |M_{-1}|$. Note that $|R_p|=p(p-1)$. Therefore, $M_{-1}=R_p$.\qedhere
	\end{proof}
	
Let $\overline{\rho}_\Delta$ be the characteristic polynomial of $\Delta$ mod $p$, and suppose that the factorization of $\overline{\rho}_{\Delta}$ into irreducible polynomials over $\F_p$ is $f_1\cdots f_n$. Suppose that $p$ is sufficiently large. Then $\overline{\rho}_{\Delta}$ has distinct roots in $\overline{\F}_p$, and the $f_i$ will be coprime.
	For each $i$, let $\alpha_i \in \overline{\F}_p$ be a root of $f_i$.
	
	We recall the isomorphism introduced in \cite{Green}:
	\begin{equation}\label{ring-isom}
		\Phi :  \F_p[\Delta] \rightarrow  \prod_{i=1}^n \F_p(\alpha_i), \quad \Phi(F(\Delta)) = (F(\alpha_1),\dots, F(\alpha_n))
	\end{equation}
	for any $F \in \F_p[X]$. The map $\Phi$ is a well-defined ring homomorphism.	The map $\Phi$ induces a group isomorphism
	\begin{equation}\label{group-iso}
		\Phi : \SL_2(\F_p[\Delta]) \rightarrow \prod_{i=1}^n \SL_2(\F_p(\alpha_i)).
	\end{equation}

	Let $\gamma(r,s) := \tau g^{-(r)} g^{(s)} \tau^{-1}$, then we have
	\begin{equation}\label{phi}
		\Phi(\gamma(r,s)) = (M_{\alpha_i}(r,s))_{i = 1}^n,
	\end{equation}
	where $M_{\alpha_i}(r,s)$ is defined in Lemma \ref{baby case}.

	We also need a couple of lemmas about subgroups of direct products. We shall repeatedly use the following Goursat's lemma (see \cite[Proposition 1.6]{S16}).
	
	\begin{lemma}[Goursat's lemma]\label{goursat}
		Let $G_1, G_2$ be groups. Consider the direct product $G_1 \times G_2$ and let $\pi_i : G_1 \times G_2 \rightarrow G_i$ be the two projection maps. Let $H \leqslant G_1 \times G_2$ be a subgroup, and suppose that $\pi_i(H) = G_i$ for $i = 1,2$. Then there are normal subgroups $N_i \lhd G_i$ and an isomorphism $\phi : G_1/N_1 \rightarrow G_2/N_2$ such that $H$ has the form $\{(g_1, g_2) \in G_1 \times G_2 : \phi(\overline{g}_1) = \overline{g}_2\}$, where $\overline{g}_i$ is the image of $g_i$ in $G_i/N_i$.
	\end{lemma}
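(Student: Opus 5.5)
This is the classical Goursat lemma (the paper quotes \cite[Proposition 1.6]{S16}), and I would prove it directly by building the normal subgroups and the isomorphism from $H$ itself. Set
\[ N_1=\{g_1\in G_1:(g_1,1)\in H\},\qquad N_2=\{g_2\in G_2:(1,g_2)\in H\}. \]
These are subgroups because $H$ is a subgroup and $G_1\times\{1\}$, $\{1\}\times G_2$ are subgroups. Normality is where the hypothesis $\pi_i(H)=G_i$ enters. Given $g\in G_1$ and $n\in N_1$, choose $h\in G_2$ with $(g,h)\in H$. Then
\[ (g,h)(n,1)(g,h)^{-1}=(gng^{-1},1)\in H, \]
so $gng^{-1}\in N_1$. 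The argument for $N_2$ is symmetric.

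Next I define $\phi:G_1/N_1\to G_2/N_2$ by $\phi(\overline{g}_1)=\overline{g}_2$, where $g_2$ is any element with $(g_1,g_2)\in H$; such $g_2$ exists by surjectivity of $\pi_1$.

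The one step needing care is that $\phi$ is well defined. Suppose $(g_1,g_2),(g_1',g_2')\in H$ with $g_1^{-1}g_1'\in N_1$. Then $(g_1^{-1}g_1',\,g_2^{-1}g_2')\in H$ and $(g_1^{-1}g_1',1)\in H$. Multiplying the inverse of the second element by the first gives $(1,g_2^{-1}g_2')\in H$, so $g_2^{-1}g_2'\in N_2$. Hence $\phi(\overline{g}_1)$ does not depend on the choice of representative $g_1$ or of the partner $g_2$.

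With this in hand, the remaining properties follow quickly.
\begin{itemize}
\item $\phi$ is a homomorphism, because $H$ is closed under products.
\item $\phi$ is surjective, because $\pi_2(H)=G_2$.
\item $\phi$ is injective by the symmetric argument: if $\overline{g}_2$ is trivial, then $g_2\in N_2$, so $(1,g_2^{-1})\in H$. Hence $(g_1,1)=(g_1,g_2)(1,g_2^{-1})\in H$, and therefore $g_1\in N_1$.
\end{itemize}

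Finally I show $H$ equals the graph $\{(g_1,g_2):\phi(\overline{g}_1)=\overline{g}_2\}$. The inclusion of $H$ in the graph holds by the definition of $\phi$. For the reverse inclusion, suppose $\phi(\overline{g}_1)=\overline{g}_2$. Pick $(g_1,h)\in H$. Then $\overline{h}=\overline{g}_2$, so $g_2=hn$ with $n\in N_2$, and
\[ (g_1,g_2)=(g_1,h)(1,n)\in H. \]
None of these steps is a real obstacle; the only point requiring attention is the well-definedness computation above.
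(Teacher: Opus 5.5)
Your proof is correct and is the standard argument. The paper gives no proof of its own and cites \cite[Proposition 1.6]{S16}, where $N_1$ and $N_2$ are likewise the kernels of the projections restricted to $H$, and $H$ is identified with the graph of the induced isomorphism $G_1/N_1\to G_2/N_2$; all your steps (normality from surjectivity of $\pi_1$, well-definedness, bijectivity, and the graph identification) check out.
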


	We note the following fact concerning the group $\mathrm{SL}_2$:
	\begin{lemma}\label{fact of sl2}
		Suppose that $p$ is a prime larger than $3$.
		
(i) Let $q$ be a power of $p$. Then the center of $\mathrm{SL}_2(\mathbb{F}_{q})$,  $Z\big(\mathrm{SL}_2(\mathbb{F}_{q})\big)=\{I,-I\}$,  is the unique nontrivial normal subgroup of $\mathrm{SL}_2(\mathbb{F}_{q})$.

(ii) Suppose that the group $G=\prod_{j=1}^{m}\mathrm{SL}_2(\mathbb{F}_{p^{d_j}})$ and $H$ is a proper normal subgroup of $G$. Then $p(p^2-1)/2$ divides $|G/H|$. 
	\end{lemma}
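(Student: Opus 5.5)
For part (i) I will rely on the classical structure theory of $\mathrm{SL}_2(\mathbb{F}_q)$ rather than prove simplicity from scratch. For $p>3$ the quotient $\mathrm{PSL}_2(\mathbb{F}_q)=\mathrm{SL}_2(\mathbb{F}_q)/\{\pm I\}$ is simple. This can be quoted, or derived via Iwasawa's criterion: the action on $\mathbb{P}^1(\mathbb{F}_q)$ is doubly transitive, the point stabiliser contains the abelian normal subgroup of upper unipotent matrices, and the conjugates of that subgroup generate. Moreover $\mathrm{SL}_2(\mathbb{F}_q)$ is perfect for $q>3$.

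Let $N\lhd\mathrm{SL}_2(\mathbb{F}_q)$ be nontrivial. Its image in $\mathrm{PSL}_2$ is either trivial or everything. If it is trivial, then $N\subseteq\{\pm I\}$, so $N=\{\pm I\}$, and this is indeed the center because $q$ is odd, so $-I\neq I$. If the image is everything, then $N\{\pm I\}=\mathrm{SL}_2$, so $N$ has index at most $2$. Since $\mathrm{SL}_2$ is perfect, $N\supseteq[\mathrm{SL}_2,\mathrm{SL}_2]=\mathrm{SL}_2$, so $N$ is not proper. Reading "nontrivial" as proper and nontrivial, this gives (i). The fact that the center is exactly $\{\pm I\}$ is the routine computation that scalars of determinant $1$ are $\pm1$.

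For part (ii) I will first describe all normal subgroups of $G=\prod_j G_j$ with $G_j=\mathrm{SL}_2(\mathbb{F}_{p^{d_j}})$. Let $H\lhd G$ and $\pi_j$ the projections.
\begin{itemize}
\item Each $\pi_j(H)$ is normal in $G_j$, so by (i) it is $1$, $\{\pm I\}$ or $G_j$.
\item Suppose $\pi_j(H)=G_j$. For $h\in H$ and $x\in G_j$ (embedded in the $j$-th factor), the commutator $[h,x]$ lies in $H$ and equals $[\pi_j(h),x]$ in the $j$-th coordinate. Hence $H\cap G_j\supseteq[G_j,G_j]=G_j$ by perfectness.
\item It follows that $H$ contains $\prod_{j\in S}G_j$, where $S=\{j:\pi_j(H)=G_j\}$. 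Modulo this, the remaining projections of $H$ are central, so $H\subseteq\prod_{j\in S}G_j\times\prod_{j\notin S}\{\pm I\}$.
\end{itemize}
If $H$ is proper, then $S\neq\{1,\dots,m\}$. Fix $j_0\notin S$. Then $|G/H|$ is divisible by $|G_{j_0}|/|\pi_{j_0}(H)|$, since projecting to factor $j_0$ gives a surjection $G/H\to G_{j_0}/\pi_{j_0}(H)$. This ratio is either $|G_{j_0}|$ or $|\mathrm{PSL}_2(\mathbb{F}_{p^{d}})|=p^{d}(p^{2d}-1)/2$ with $d=d_{j_0}$.

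It remains to check that $p(p^2-1)/2$ divides $p^d(p^{2d}-1)/2$. This holds because $p\mid p^d$ and $(p^2-1)\mid(p^{2d}-1)$. For the halves, $(p^{2d}-1)/(p^2-1)$ is an integer, so $(p^{2d}-1)/2=\big((p^2-1)/2\big)\cdot(p^{2d}-1)/(p^2-1)$. The case of $|G_{j_0}|$ itself is then immediate.

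The main obstacle is the step $\pi_j(H)=G_j\Rightarrow G_j\subseteq H$, which needs perfectness of $\mathrm{SL}_2(\mathbb{F}_q)$ for $p>3$. I will quote this; alternatively, one can exhibit unipotents as commutators of diagonal and unipotent matrices, using $a^2\neq1$ for some $a\in\mathbb{F}_q^*$ when $q>3$. Everything else is bookkeeping with (i) and Goursat-style projections.
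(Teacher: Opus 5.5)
Your proof is correct, but for (ii) it goes through projections rather than intersections, and that is where it differs from the paper.

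The paper picks a factor $S_i\cong \mathrm{SL}_2(\mathbb{F}_{p^{d_i}})$ with $S_i\not\subseteq H$; one exists simply because the factors generate $G$. Then $S_i\cap H$ is a proper normal subgroup of $S_i$, so it lies in $\{\pm I\}$ by (i). The map $s\mapsto sH$ embeds $S_i/(S_i\cap H)$ into $G/H$, and Lagrange gives the divisibility. Because the paper works with intersections, it never has to rule out a proper normal subgroup whose projections are all full. So it needs no separate perfectness argument in (ii).

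Your projection approach does need that step: your commutator argument showing $\pi_j(H)=G_j\Rightarrow G_j\subseteq H$. You then get divisibility from the surjection $G/H\to G_{j_0}/\pi_{j_0}(H)$ rather than from an injection into $G/H$. In return you obtain a stronger structural fact: every normal subgroup of $G$ is $\prod_{j\in S}G_j$ times a subgroup of the centre $\prod_{j\notin S}\{\pm I\}$.

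For (i), the paper simply cites Lang, whereas you derive it from the simplicity of $\mathrm{PSL}_2(\mathbb{F}_q)$ and the perfectness of $\mathrm{SL}_2(\mathbb{F}_q)$. The final arithmetic, $p(p^2-1)/2\mid p^d(p^{2d}-1)/2$, is the same in both.
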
	
	\begin{proof}
(i) See \cite[Chapter XIII, Theorem 9.3]{Lang}.\label{sing}

(ii) Let $S_i$ be the subgroup of $G$ consisting of elements with all $j$-th ($j\not=i$) coordinates equal to $\{I\}$. Note that $S_i\cong \mathrm{SL}_2(\mathbb{F}_{p^{d_i}})$. For notation simplicity, we may view $S_i= \mathrm{SL}_2(\mathbb{F}_{p^{d_i}})$. Since $H$ is proper, there exists $i$ such that 
$S_i$ is not contained in $H$. Then $K:=S_i\cap H$ is a proper normal subgroup of $S_i$. By (i), $K\subseteq \{I,-I\}$. Consider the group homomorphism 
$\varphi: S_i\rightarrow G/H$ defined by $\varphi(s)=sH$, the kernel of $\varphi$ is exact $K$.  Applying the first isomorphism theorem and Lagrange's theorem, we conclude that $p^{d_i}(p^{2d_i}-1)/|K|=|S_i/K|$ divides $|G/H|$. Thus, $p(p^2-1)/2$ divides $|G/H|$.  
	\end{proof}

If the group $G$ is in the form $G=\prod_{j=1}^{m}\mathrm{SL}_2(\mathbb{F}_{p^{d_j}})$, then we say $G$ is a finite direct product of $\mathrm{SL}_2(\mathbb{F}_{p^{d}})$.
	\begin{lemma}\label{Ribet2}
		Suppose that both $G_1$ and $G_2$ are finite direct products of $\mathrm{SL}_2(\mathbb{F}_{p^d})$. Let $H\leqslant G_1$ with $|H|$ dividing $p^2(p-1)$. Let $\pi_{i}:G_1\times G_2\to G_i$ be projection for $i=1,2$. Suppose that $G\leqslant G_1\times G_2$ satisfies $\pi_{1}(G)=H$ and $\pi_{2}(G)=G_2$. Then $G=H \times G_2$.
	\end{lemma}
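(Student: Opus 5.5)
The plan is to apply Goursat's lemma (Lemma \ref{goursat}) to $G$, viewed as a subgroup of $H\times G_2$. Since $\pi_1(G)=H$ and $\pi_2(G)=G_2$, Goursat gives normal subgroups $N_1\lhd H$ and $N_2\lhd G_2$, together with an isomorphism $\phi: H/N_1\to G_2/N_2$, such that
\[G=\{(h,g_2): \phi(\overline h)=\overline{g_2}\}.\]
The goal is to show $N_2=G_2$. Once that holds, the quotient $G_2/N_2$ is trivial, hence so is $H/N_1$, so $N_1=H$, the defining condition is vacuous, and $G=H\times G_2$.

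To prove $N_2=G_2$, suppose instead that $N_2$ is a proper normal subgroup of $G_2$. Since $G_2$ is a finite direct product of copies of $\mathrm{SL}_2(\mathbb{F}_{p^{d}})$, Lemma \ref{fact of sl2}(ii) (with $p>3$ implicitly, as in that lemma) shows that $p(p^2-1)/2$ divides $|G_2/N_2|$. On the other side, $|G_2/N_2|=|H/N_1|$, and $|H/N_1|$ divides $|H|$, which divides $p^2(p-1)$. Hence $p(p^2-1)/2$ divides $p^2(p-1)$. Dividing by $p(p-1)/2$, this says $p+1$ divides $2p$. But $\gcd(p+1,p)=1$, so $p+1$ would have to divide $2$, which is impossible for $p\geqslant 2$. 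This contradiction gives $N_2=G_2$, and the proof is complete.

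The only point needing care is confirming that Goursat's lemma applies when the ambient group is $H\times G_2$ rather than $G_1\times G_2$. This is immediate, because $G\subseteq \pi_1(G)\times\pi_2(G)=H\times G_2$. The rest is the divisibility computation above.
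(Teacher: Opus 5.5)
Your proposal is correct and follows the paper's route: Goursat's lemma applied to $G\leqslant H\times G_2$, then Lemma~\ref{fact of sl2}(ii) to get $p(p^2-1)/2 \mid |H/N_1|$, which contradicts $|H|\mid p^2(p-1)$. Your check that this would force $p+1\mid 2p$, which is impossible, spells out the final divisibility step that the paper leaves implicit.
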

	
	\begin{proof}
		Clearly, $G\leqslant H\times G_2$. By Lemma \ref{goursat}, there exist normal subgroups $N_1\triangleleft H$, $N_2\triangleleft G_2$ and isomorphism $\varphi: H/N_1\to  G_2/N_2$ such that
		\begin{equation*}
			G=\{(x,y)\in  H\times  G_2:\varphi(\overline{x})=\overline{y}\},
		\end{equation*}
		where $\overline{x}$ and $\overline{y}$ are the image of $x$ in  $  H/N_1$ and $y$ in $ G_2/N_2$. It suffices to prove that $N_1=H$ (or equivalently $N_2=G_2$). 
Otherwise, $N_2$ is a proper normal subgroup of $G_2$. Then by Lemma \ref{fact of sl2} (ii), $p(p^2-1)/2$ divides $|G_2/N_2|$. Since $|G_2/N_2|=|H/N_1|$, we further obtain  $p(p^2-1)/2\mid |H|$. This is a contradiction since $|H|$ divides $p^2(p-1)$.
	\end{proof}
Note that $|U_p|=p$ and $|R_p|=p(p-1)$, 
		Lemma \ref{Ribet2} holds for $H=U_p$, $R_p$ and $U_p\times R_p$.

\begin{lemma}\label{Ribet3} Let $p>3$ be a prime. Let $H_1=U_p$ and $H_2=R_p$.
		Let $p_{i}:H_1\times H_2\to H_i$ be projection for $i=1,2$. Suppose that $H\leqslant H_1\times H_2$ satisfies $p_{1}(H)=H_1$ and $p_{2}(H)=H_2$. Then $H=H_1 \times H_2$.
	\end{lemma}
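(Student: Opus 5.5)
The plan is to apply Goursat's lemma (Lemma \ref{goursat}) to $H\leqslant U_p\times R_p$. Since $p_1(H)=U_p$ and $p_2(H)=R_p$, we obtain normal subgroups $N_1\lhd U_p$ and $N_2\lhd R_p$, together with an isomorphism $\phi:U_p/N_1\to R_p/N_2$, such that $H$ is the graph of $\phi$ modulo $N_1\times N_2$. It suffices to show $N_1=U_p$, because then $U_p/N_1$ is trivial, so $N_2=R_p$, and $H=U_p\times R_p$.

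Suppose instead that $N_1\neq U_p$. As $|U_p|=p$ is prime, this forces $N_1=\{I\}$, so $R_p/N_2\cong U_p\cong \Z/p\Z$. In particular $|N_2|=p(p-1)/p=p-1$ and $R_p/N_2$ is abelian. Hence $N_2$ contains the commutator subgroup $[R_p,R_p]$.

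The key step is to compute this commutator subgroup. Conjugating $\smallpmat{1}{t}{0}{1}$ by $\smallpmat{a}{0}{0}{a^{-1}}$ gives $\smallpmat{1}{a^2t}{0}{1}$. Therefore the commutator of these two elements is $\smallpmat{1}{(a^2-1)t}{0}{1}$. Since $p>3$, we may choose $a\in\F_p^*$ with $a^2\neq 1$, and letting $t$ vary shows that every unipotent element $\smallpmat{1}{s}{0}{1}$ lies in $[R_p,R_p]$. Thus $U_p\leqslant [R_p,R_p]\leqslant N_2$, so $p$ divides $|N_2|=p-1$, which is a contradiction.

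An equivalent route is to observe that the abelianization of $R_p$ is $R_p/U_p\cong\F_p^*$, of order $p-1$, which has no quotient of order $p$. The only point needing care is the commutator computation, which uses $p>3$ to guarantee some $a$ with $a^2\neq1$. Everything else is formal.
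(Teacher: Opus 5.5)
Your proof is correct. It shares the paper's skeleton: Goursat's lemma, then ruling out $N_1=\{I\}$, which amounts to excluding a normal subgroup $N_2\lhd R_p$ of order $p-1$. You reach the contradiction by a different route.

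The paper argues elementwise inside $N_2$. It first notes $N_2\cap U_p=\{I\}$ for order reasons. For $h=\smallpmat{a}{b}{0}{a^{-1}}\in N_2$ and $u=\smallpmat{1}{1}{0}{1}$, normality puts the commutator $huh^{-1}u^{-1}=\smallpmat{1}{a^2-1}{0}{1}$ into $N_2\cap U_p$, which forces $a=\pm1$. A separate computation of $h^2$ in the case $a=-1$ then gives $N_2\subseteq\{\pm I\}$. This contradicts $|N_2|=p-1>2$.

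You instead use that $R_p/N_2\cong\Z/p\Z$ is abelian. Hence $N_2\supseteq[R_p,R_p]\supseteq U_p$, which contradicts $p\nmid p-1$.

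What each buys:
\begin{itemize}
\item Your version avoids the case split on $a=\pm1$. It also names the real obstruction: the abelianization of $R_p$ is $\F_p^*$, whose order is prime to $p$, so $R_p$ has no quotient of order $p$.
\item The paper's version proves a slightly different fact: any normal subgroup of $R_p$ meeting $U_p$ trivially lies in $\{\pm I\}$.
\end{itemize}
The two arguments use $p>3$ at different points: you need it to find $a$ with $a^2\neq1$, while the paper needs it to have $p-1>2$.
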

	
	\begin{proof}By Lemma \ref{goursat}, there are normal subgroups $N_1 \lhd U_p$ and $N_2\lhd R_p$ and an isomorphism $\phi : U_p/N_1 \rightarrow R_p/N_2$ such that $H$ has the form $\{(g_1, g_2) \in U_p \times R_p : \phi(\overline{g}_1) = \overline{g}_2\}$. It suffices to prove $N_1=U_p$. Otherwise, $N_1=\{I\}$ and then $|N_2|=p-1$. Since $U_p$ is a subgroup of $R_p$, we have $N_2\cap U_p=\{I\}$. Take an arbitrary $h\in N_2$. Write $h=\pmat{a}{b}{0}{a^{-1}}$. Set $u=\pmat{1}{1}{0}{1}\in U_p$. A direct calculation gives $huh^{-1}u^{-1}=\pmat{1}{a^2-1}{0}{1}$. Note that $huh^{-1}u^{-1}\in N_2\cap U_p$. Thus, $a=1$ or $a=-1$. 

If $a=1$, then $h\in N_2\cap U_p$ and thus $h=I$. If $a=-1$, then $h^2=\pmat{1}{-2b}{0}{1}\in N_2\cap U_p$ and thus $h^2=I$ which implies $b=0$ and $h=-I$. Note that $h$ is arbitrary in $N_2$, and this is a contradiction to $|N_2|=p-1$. This completes the proof.   
	\end{proof}

Recall that $\alpha_i$ is the root of $f_i$, where $\prod_{i=1}^nf_i$ is the characteristic polynomial of $\Delta$ mod $p$. Write $\mathcal{R}=\{\alpha_1,\ldots,\alpha_n\}$. When $\mathcal{R}\cap \{0,-1\}=\emptyset$, Proposition \ref{quasirandom-gamma} holds by \cite[Proposition 8.5]{Green}. 
We only need to consider the case $\mathcal{R}\cap \{0,-1\}\not=\emptyset$. We may assume that $\alpha_1=0$ if $\mathcal{R}\cap \{0,-1\}=\{0\}$, $\alpha_1=-1$ if $\mathcal{R}\cap \{0,-1\}=\{-1\}$ and $\alpha_1=0,\alpha_2=-1$ if $\{0,-1\}\subseteq \mathcal{R}$. Then we write $\mathcal{I}=\{1\}$ or $\{1,2\}$ according to $|\mathcal{R}\cap \{0,-1\}|=1$ or $2$. Let 
\begin{equation}\label{definecalJ}
 \mathcal{J}=\{1,2,\ldots,n\}\setminus \mathcal{I}.
\end{equation}

Note that for $j\in \mathcal{J}$, $M_{\alpha_j}=\SL_2(\F_p(\alpha_j))$. 
\begin{lemma}\label{Ribet1} With the notations $\mathcal{I}$ and $\mathcal{J}$ as the above. Let $H=\prod_{i\in \mathcal{I}}M_{\alpha_i}$ and $G_2=\prod_{j\in \mathcal{J}}M_{\alpha_j}$. Write $G=\Phi(\tau\Gamma_p\tau^{-1})$. Let $\pi_2:H\times G_2\to G_2$ be the projection. Then $\pi_{2}(G)=G_2$.
	\end{lemma}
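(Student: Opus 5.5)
The plan is to follow the argument Green uses for the generic case (\cite[Section 10]{Green}), applied only to the coordinates indexed by $\mathcal{J}$. By \eqref{phi}, $G=\Phi(\tau\Gamma_p\tau^{-1})$ is generated by the elements $(M_{\alpha_i}(r,s))_{i=1}^n$. Hence $\pi_2(G)$ is the subgroup of $G_2=\prod_{j\in\mathcal{J}}\SL_2(\F_p(\alpha_j))$ generated by $(M_{\alpha_j}(r,s))_{j\in\mathcal{J}}$, for $r,s\in\F_p^*$. For $j\in\mathcal{J}$ we have $\alpha_j\notin\{0,-1\}$, and the $\alpha_j$ are roots of pairwise distinct irreducible factors $f_j$, since $p$ is large and the characteristic polynomial of $\Delta$ mod $p$ is separable. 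So no two $\alpha_j$ are Galois conjugate over $\F_p$.

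\textbf{Step 1 (each factor).} By Lemma \ref{baby case}(i), the projection of $\pi_2(G)$ to each single factor $j\in\mathcal{J}$ is all of $\SL_2(\F_p(\alpha_j))$.

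\textbf{Step 2 (pairs of factors).} Fix $j\neq k$ in $\mathcal{J}$ and let $P$ be the image of $\pi_2(G)$ in $\SL_2(\F_p(\alpha_j))\times\SL_2(\F_p(\alpha_k))$. By Step 1, Goursat's lemma (Lemma \ref{goursat}) applies, and Lemma \ref{fact of sl2}(i) leaves only two possibilities: either $P$ is the full product, or $P$ is the graph of an isomorphism between the quotients by normal subgroups contained in the centres $\{\pm I\}$. The quotients must be isomorphic, so $\F_p(\alpha_j)$ and $\F_p(\alpha_k)$ have the same size. Isomorphisms between $\mathrm{PSL}_2$ (or $\SL_2$) over finite fields are conjugation by an element of $\mathrm{GL}_2$ composed with a field automorphism $\sigma$; I would quote this standard classification. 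In the graph case, comparing traces gives
\[
\mathrm{tr}\,M_{\alpha_j}(r,s)=\pm\,\sigma\big(\mathrm{tr}\,M_{\alpha_k}(r,s)\big)
\]
for all $r,s\in\F_p^*$, with a sign that may depend on $(r,s)$.

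\textbf{Step 3 (the trace computation).} Using $\mathrm{tr}(L(a)U(b)L(c))=2+b(a+c)$ for lower/upper unipotents $L,U$, one gets
\[
\mathrm{tr}\,M_\theta(r,s)=2-\theta\,t,\qquad t=\frac{(s-r)^2}{rs}\in\F_p.
\]
Since $\sigma$ fixes $\F_p$, the relation becomes $2-\alpha_j t=\pm(2-\sigma(\alpha_k)t)$, and $t$ takes many (at least $\gg p$) values. If the sign $+$ occurs for two distinct nonzero values of $t$, then $\alpha_j=\sigma(\alpha_k)$, so $\alpha_j$ and $\alpha_k$ are conjugate and $f_j=f_k$, a contradiction. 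If instead the sign $-$ occurs for at least two distinct values of $t$, then $4=(\alpha_j+\sigma\alpha_k)t$ holds for both, which is impossible. Since there are more than four values of $t$, one of the two signs must occur twice, so the graph case is excluded and $P$ is the full product.

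\textbf{Step 4 (all factors at once).} For $p>3$ every $\SL_2(\F_{p^d})$ is perfect, and its only proper normal subgroups are central. The standard Goursat/Ribet lemma then says: a subgroup of a finite product of such groups that surjects onto every product of two factors is the whole product. I would prove this by induction on the number of factors, applying Lemma \ref{goursat} to (first $m-1$ factors) $\times$ (last factor) and using perfectness to upgrade the surjection to a commutator argument. This gives $\pi_2(G)=G_2$.

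The main obstacle is Step 2: justifying the classification of isomorphisms (including the $\pm I$ ambiguity) cleanly enough that the trace identity of Step 3 follows. This is precisely Green's Proposition 10.5-type argument, which I would cite and adapt.
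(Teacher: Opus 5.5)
Your proposal is correct and is essentially the argument the paper relies on: the paper's proof simply cites the proof of Green's Proposition 10.1. You reconstruct that proof on the $\mathcal{J}$-coordinates: surjectivity onto each factor from Lemma \ref{baby case}(i), which has to be read for $\theta\in\overline{\F}_p\setminus\{0,-1\}$; surjectivity onto each pair via Goursat, where the trace formula $\mathrm{tr}\,M_\theta(r,s)=2-\theta(s-r)^2/(rs)$ excludes graphs of twisted isomorphisms because the $\alpha_j$ are pairwise non-conjugate; and then Ribet's lemma for perfect groups. Green's generic-case argument carries over verbatim for the reason you identify: $\pi_2(G)$ is generated by $(M_{\alpha_j}(r,s))_{j\in\mathcal{J}}$, and every such $\alpha_j$ avoids $\{0,-1\}$.
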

	\begin{proof}This is contained in the proof of \cite[Proposition 10.1]{Green}.
	\end{proof} 

	We introduce the structure of $\Gamma_p$.
	\begin{lemma}\label{gamma_p}
		Suppose that $Q$ is pan-generic and that $p$ is sufficiently large in terms of $Q$. Then we have 
		\begin{equation*}
			\Gamma_{p} \leqslant \tau ^{-1}\SL_2(\F_p[\Delta]) \tau \quad  \text{and} \quad  \Phi(\tau\Gamma_p \tau^{-1})=\prod_{i=1}^n M_{\alpha_i},
		\end{equation*}
		where  $\Phi$ is the isomorphism defined in \eqref{group-iso}.
	\end{lemma}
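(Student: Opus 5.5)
The first inclusion $\Gamma_p\leqslant\tau^{-1}\SL_2(\F_p[\Delta])\tau$ was already observed after \eqref{ul-conjs}. For the identity, the inclusion $G:=\Phi(\tau\Gamma_p\tau^{-1})\subseteq\prod_{i=1}^n M_{\alpha_i}$ is immediate from \eqref{phi}: each generator $\gamma(r,s)$ maps to $(M_{\alpha_i}(r,s))_i$, and each coordinate lies in $M_{\alpha_i}$. The work is in the reverse inclusion.

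When $\mathcal{R}\cap\{0,-1\}=\emptyset$ this is \cite[Proposition 10.1]{Green}, so assume $\mathcal{I}\neq\emptyset$. We argue with Goursat-type lemmas in two stages.

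First we show that the projection of $G$ onto $H=\prod_{i\in\mathcal I}M_{\alpha_i}$ is all of $H$. If $|\mathcal I|=1$, this holds because $M_{\alpha_1}$ is by definition generated by the $M_{\alpha_1}(r,s)$, which are exactly the projections of the generators of $G$. If $|\mathcal I|=2$, then $M_{\alpha_1}=U_p$ and $M_{\alpha_2}=R_p$ by Lemma \ref{baby case}(ii),(iii). The projection of $G$ onto $U_p\times R_p$ is a subgroup surjecting onto each factor, again because the generators project onto generating sets. Lemma \ref{Ribet3} then gives that this projection is $U_p\times R_p$.

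Second, Lemma \ref{Ribet1} gives $\pi_2(G)=G_2=\prod_{j\in\mathcal J}\SL_2(\F_p(\alpha_j))$. This is a finite direct product of $\SL_2(\F_{p^d})$, using Lemma \ref{baby case}(i) for $\alpha_j\in\F_p\setminus\{0,-1\}$ and the fact, from Green, that $M_{\alpha_j}=\SL_2(\F_p(\alpha_j))$ for $\alpha_j\notin\F_p$. We then apply Lemma \ref{Ribet2} with $G_1$ the ambient $\SL_2$-product containing $H$, with $H=U_p$, $R_p$ or $U_p\times R_p$. Here $|H|$ is $p$, $p(p-1)$ or $p^2(p-1)$, each dividing $p^2(p-1)$, as noted after Lemma \ref{Ribet2}. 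This yields $G=H\times G_2=\prod_i M_{\alpha_i}$. If $\mathcal J$ is empty, the first stage already finishes the proof.

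The main obstacle is the bookkeeping rather than any new idea. We must check that $p$ large makes the characteristic polynomial separable, so that $\Phi$ is an isomorphism and $0,-1$ each occur at most once as roots. We also need $\alpha_1=0$ or $-1$ to lie in $\F_p$, so that the factor is $U_p$ or $R_p$ inside $\SL_2(\F_p)$. Finally, we must verify that Lemma \ref{Ribet1}, quoted from Green's proof, applies under the pan-generic hypothesis. Its argument only uses the $\mathcal J$-coordinates, where the eigenvalues avoid $\{0,-1\}$, and for $p$ large in terms of $Q$ the distinct eigenvalues remain distinct and non-conjugate mod $p$.
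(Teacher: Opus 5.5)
Your proposal is correct and follows the paper's proof step for step: Lemma \ref{Ribet1} gives $\pi_2(G)=G_2$; Lemma \ref{baby case}, together with Lemma \ref{Ribet3} when $|\mathcal{I}|=2$, gives $\pi_1(G)=H$; and Lemma \ref{Ribet2} then forces $G=H\times G_2=\prod_i M_{\alpha_i}$. Your extra case $\mathcal{J}=\emptyset$ cannot actually occur, since at most two of the four distinct roots lie in $\{0,-1\}$, but covering it does no harm.
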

\begin{proof} This is a generalization of \cite[Proposition 10.1]{Green}. Let $H=\prod_{i\in \mathcal{I}}M_{\alpha_i}$ and $G_2=\prod_{j\in \mathcal{J}}M_{\alpha_j}$. Write $G=\Phi(\tau\Gamma_p\tau^{-1})$. The task is to prove that $G=\prod_{i=1}^n M_{\alpha_i}$. Let $\pi_1:H\times G_2\to H$ and $\pi_2:H\times G_2\to G_2$ be projections.

By Lemma \ref{Ribet1}, we have $\pi_{2}(G)=G_2$. In the case $|I|=1$, Lemma \ref{baby case} gives $\pi_1(G)=H$. In the case $|I|=2$, we combine Lemma \ref{baby case} and Lemma \ref{Ribet3} to obtain $\pi_1(G)=H$. Then finally, we deduce from Lemma \ref{Ribet2} that $\Phi(\tau\Gamma_p\tau^{-1})=G=H\times G_2=\prod_{i=1}^n M_{\alpha_i}$. This completes the proof of Lemma \ref{gamma_p}.

\end{proof}

\section{Proof of Proposition \ref{quasirandom-gamma}}

Throughout this section, we assume that $Q$ is pan-generic, $p$ is sufficiently large in terms of $Q$ and $\overline{\rho}_{\Delta}$ has distinct roots in $\overline{\F}_{p}$. In particular, the irreducible factors $f_{i}(x)\in \F_p[x]$ are pairwise coprime. This yields the direct sum decomposition of subspaces:
	\begin{equation}\label{deco-space}
		V:=\F_p^4=\oplus_{i=1}^{n}W_i,\quad\ W_i=\ker f_i(\Delta).
	\end{equation}
We use $(\mathbb{F}_{p}[\Delta])^{\times}$ to denote the set of invertible elements in $\mathbb{F}_{p}[\Delta]$.
	
		\begin{lemma}\label{x}
		The set
		\begin{equation*}
			X_p:=\{F(\Delta)\in (\mathbb{F}_{p}[\Delta])^{\times}: F(x)\in \mathbb{F}_{p}[x], F(0)=F(-1)=1\}
		\end{equation*}
		is a multiplicative subgroup of $(\mathbb{F}_p [\Delta])^\times$. 
	\end{lemma}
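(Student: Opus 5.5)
The natural route is to transport everything through the ring isomorphism $\Phi$ of \eqref{ring-isom}. We need to check that $X_p$ contains the identity, is closed under multiplication, and is closed under inverses. The essential observation is that the conditions $F(0)=F(-1)=1$ should be read as conditions on the components of $\Phi(F(\Delta))$ at the roots $0$ and $-1$. For this reading to make sense, the values $F(0)$ and $F(-1)$ must depend only on the element $F(\Delta)$ and not on the chosen polynomial $F$.

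\emph{Step 1: well-definedness.} Suppose $F(\Delta)=G(\Delta)$. Then $F-G$ is divisible by the minimal polynomial of $\Delta$ over $\F_p$. Since $\overline{\rho}_\Delta$ has distinct roots for $p$ large, this minimal polynomial equals $\overline{\rho}_\Delta=f_1\cdots f_n$. Hence $F(\alpha)=G(\alpha)$ for every root $\alpha$ of $\overline{\rho}_\Delta$.

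If $0$ (respectively $-1$) is a root, the condition $F(0)=1$ (respectively $F(-1)=1$) is therefore intrinsic. If $0$ is not a root, the condition is not intrinsic, since $F$ can be modified by a multiple of $\overline{\rho}_\Delta$. We handle this by reading the definition as: there exists $F$ representing the element with $F(0)=F(-1)=1$. By the Chinese remainder theorem, namely the coprimality of $\overline{\rho}_\Delta$, $x$ and $x+1$ when $0,-1$ are not roots, any representative can then be adjusted to satisfy the conditions at non-roots. So in that case the condition is vacuous.

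\emph{Step 2: identify $X_p$.} Under $\Phi$, the set $X_p$ corresponds exactly to the tuples $(u_1,\dots,u_n)\in\prod_i\F_p(\alpha_i)^\times$ with $u_i=1$ whenever $\alpha_i\in\{0,-1\}$. This is the subset where the coordinates indexed by $\mathcal I$ equal $1$.

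Surjectivity onto this set holds by the Chinese remainder theorem applied to $\F_p[x]/(\overline{\rho}_\Delta\cdot x(x+1)/\gcd)$. Concretely, take any $F$ realising the prescribed values $u_i$ at the $\alpha_i$, and additionally $1$ at $0$ and $-1$ when these are not roots. This is possible because the moduli $f_i$, $x$, $x+1$ (after removing duplicates) are pairwise coprime.

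\emph{Step 3: conclude.} The set in Step 2 is the kernel of the group homomorphism from $\prod_i\F_p(\alpha_i)^\times$ to $\prod_{i\in\mathcal I}\F_p(\alpha_i)^\times$ given by projection. It is therefore a subgroup of $\prod_i\F_p(\alpha_i)^\times$. Since $\Phi$ restricts to a group isomorphism $(\F_p[\Delta])^\times\cong\prod_i\F_p(\alpha_i)^\times$, its preimage $X_p$ is a subgroup of $(\F_p[\Delta])^\times$.

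Alternatively, one can argue directly with polynomials:
\begin{itemize}
\item $FG$ satisfies $FG(0)=FG(-1)=1$, so $X_p$ is closed under multiplication.
\item For an inverse, take $F(\Delta)^{-1}=G(\Delta)$. Then $FG\equiv 1 \pmod{\overline{\rho}_\Delta}$. Adjusting $G$ by a multiple of $\overline{\rho}_\Delta$ via the Chinese remainder theorem gives $G(0)=G(-1)=1$ at non-roots, while at roots this holds automatically.
\end{itemize}

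The only delicate point is the well-definedness issue in Step 1, which relies on $p$ being large so that the minimal polynomial of $\Delta$ is squarefree and equal to $\overline{\rho}_\Delta$. Everything else is routine.
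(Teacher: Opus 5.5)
Your proof is correct, and your main route differs from the paper's. The paper gives only the short polynomial argument. Multiplicative closure is obvious. For inverses, it observes that $F$ is coprime to $\overline{\rho}_\Delta(x)\,x(x+1)$: invertibility of $F(\Delta)$ handles $\overline{\rho}_\Delta$, and $F(0)=F(-1)=1$ handles $x(x+1)$. It then writes a B\'ezout identity $aF+b\,\overline{\rho}_\Delta\,x(x+1)=1$ and reads off both $a(\Delta)=F(\Delta)^{-1}$ and $a(0)=a(-1)=1$ at once.

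Including the factor $x(x+1)$ in the B\'ezout modulus absorbs the normalisation that you carry out by a separate CRT adjustment in your ``alternatively'' argument; otherwise that argument is the same proof. The paper's version also avoids $\Phi$ and any well-definedness discussion. It even works without assuming $\overline{\rho}_\Delta$ has distinct roots, since $F(\Delta)$ is invertible exactly when $F$ is coprime to the characteristic polynomial.

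Your main route identifies $X_p$ as $\Phi^{-1}$ of the kernel of the projection $\prod_i\F_p(\alpha_i)^\times\to\prod_{i\in\mathcal I}\F_p(\alpha_i)^\times$.
\begin{itemize}
\item \emph{Cost:} it needs the CRT surjectivity check and uses that $\Phi$ is an isomorphism, so it needs distinct roots. That is a standing assumption of that section anyway.
\item \emph{Gain:} it gives an explicit structural description of $X_p$. This is exactly the form in which $X_p$ is used later: in Lemma \ref{usl} the components at $i\in\mathcal I$ are trivial, and in Lemma \ref{quad-non} elements of $X_p$ are built by CRT.
\item \emph{Bonus:} your Step 1 makes explicit that the conditions at $0$ or $-1$ are vacuous when these are not eigenvalues of $\Delta$, a point the paper leaves implicit.
\end{itemize}
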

	\begin{proof}
		Obviously, $X_p$ is closed under multiplication. Note that when $F(\Delta)\in X_p$, $F$ must be coprime with $\overline{\rho}_\Delta(x)x(x+1)$. By B\'ezout's theorem, there exist $a(x),b(x)\in \mathbb{F}_{p}[x]$ such that
		\begin{equation}\label{bezout}
			a(x)F(x)+b(x)\overline{\rho}_\Delta(x)x(x+1)=1.
		\end{equation}
Since $\overline{\rho}_\Delta(\Delta)=0$, by (\ref{bezout}) we have $a(\Delta)F(\Delta)=I$. Taking $x=0,-1$ in (\ref{bezout}), we have $a(0)=a(-1)=1$. Thus, $F(\Delta)^{-1}=a(\Delta)\in X_p$. This completes the proof of this lemma.
	\end{proof}

Recall $\mathcal{I}$ and $\mathcal{J}$ defined around \eqref{definecalJ}. We introduce 
		\begin{equation}\label{defineTheta}
			\Theta:=\Phi^{-1}\bigg(\prod_{i \in \mathcal{I}}\{I\} \times \prod_{i \in \mathcal{J}}\SL_2(\F_p(\alpha_i))\bigg)
		\end{equation}
and \begin{equation}\label{defineThetap}
			\Theta_p:=\tau^{-1}\Theta \tau,
		\end{equation}
		where $\Phi$ is the group isomorphism in \eqref{group-iso}. Then by Lemma \ref{gamma_p}, $\Theta$ is a subgroup of $\Gamma:=\tau\Gamma_p\tau^{-1}$ and $\Theta_p$ is a subgroup of $\Gamma_p$. 

We define
		\begin{equation}\label{y}
			Y_p:=\{F(\Delta)\in \mathbb{F}_{p}[\Delta]: F(x)\in \mathbb{F}_{p}[x],F(0)=F(-1)=0\}.
		\end{equation}
Recall notations in Proposition \ref{weil-prop}. We shall consider
	\begin{equation}
		s(P)=\begin{pmatrix}
			P& 0\\0& P^{-T}
		\end{pmatrix},\quad
		l(CN)=\begin{pmatrix}
			I& 0\\CN& I
		\end{pmatrix},
	\end{equation}
	where $P,N\in \F_p[\Delta]$, and $C$ is given in \eqref{BC}.

	\begin{lemma}\label{usl}
		Let $P\in X_p$ and $N\in Y_p$. Then we have
		 $s(P), l(CN)\in \Theta_p$.
	\end{lemma}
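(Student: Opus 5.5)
The plan is to verify both memberships by conjugating with $\tau$ and then applying $\Phi$ coordinatewise. By \eqref{defineThetap}, $s(P)\in\Theta_p$ if and only if $\tau s(P)\tau^{-1}\in\Theta$, and similarly for $l(CN)$. By \eqref{defineTheta}, membership in $\Theta$ means two things: the matrix lies in $\SL_2(\F_p[\Delta])$, and its image under the isomorphism $\Phi$ of \eqref{group-iso} has trivial coordinates at the indices $i\in\mathcal{I}$, where $\alpha_i\in\{0,-1\}$. No condition is needed at $j\in\mathcal{J}$, since there the target is all of $\SL_2(\F_p(\alpha_j))$.

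The first step is a symmetry observation. The matrices $a$ and $c$ are symmetric, so $B=-2b^{-1}ab^{-T}$ and $C=-2c+\frac12 b^Ta^{-1}b$ from \eqref{BC} are symmetric too. Hence $\Delta^T=(BC)^T=CB$, and therefore $B\Delta^TB^{-1}=BCBB^{-1}=BC=\Delta$. Consequently, for any polynomial $F$ we get $BF(\Delta)^TB^{-1}=F(B\Delta^TB^{-1})=F(\Delta)$. This uses that $B$ is invertible mod $p$, which holds for $p$ large in terms of $Q$.

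Next, the dilation. Since $\tau=\mathrm{diag}(I,B)$, we have $\tau s(P)\tau^{-1}=\mathrm{diag}(P,\,BP^{-T}B^{-1})$. Write $P=F(\Delta)$. This $P$ is invertible by Lemma \ref{x}, and its inverse is again a polynomial in $\Delta$. The identity from the first step then gives $BP^{-T}B^{-1}=(BP^TB^{-1})^{-1}=P^{-1}$. So the conjugate is $\mathrm{diag}(P,P^{-1})\in\SL_2(\F_p[\Delta])$. Its image under $\Phi$ has $i$-th coordinate $\mathrm{diag}(F(\alpha_i),F(\alpha_i)^{-1})$. For $i\in\mathcal{I}$ we have $\alpha_i\in\{0,-1\}$ and $F(0)=F(-1)=1$, so this coordinate is $I$. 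Hence $\tau s(P)\tau^{-1}\in\Theta$.

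Finally, the quadratic modulation. Conjugating gives $\tau l(CN)\tau^{-1}=\smallpmat{I}{0}{BCN}{I}=\smallpmat{I}{0}{\Delta N}{I}$. Writing $N=G(\Delta)$, the $i$-th coordinate of its image under $\Phi$ is $\smallpmat{1}{0}{\alpha_iG(\alpha_i)}{1}$. For $i\in\mathcal{I}$ this is the identity, since $G(0)=G(-1)=0$ (even $\alpha_i=0$ alone would suffice). Thus $l(CN)\in\Theta_p$.

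I expect no real obstacle. The one point needing care is the symmetry identity $BF(\Delta)^TB^{-1}=F(\Delta)$, which is what makes the lower-right block of the conjugated dilation equal to $P^{-1}$; it relies on the symmetry of $B$ and $C$. One should also check that $CN$ is symmetric, so that $l(CN)$ is genuinely symplectic. This follows from the same computation: $CG(BC)$ is symmetric because $(CG(BC))^T=G(CB)C=CG(BC)$.
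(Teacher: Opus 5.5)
Your proposal is correct and follows the paper's proof essentially step for step. You conjugate by $\tau$, use the symmetry of $B$ and $C$ to get $BP^{-T}B^{-1}=P^{-1}$ and $BCN=\Delta N$, and then check that the $\mathcal{I}$-coordinates of the images under $\Phi$ are the identity because $F(0)=F(-1)=1$ and $G(0)=G(-1)=0$. Your extra checks, that $B$ and $C$ are symmetric and that $CN$ is symmetric, are correct; the paper takes them for granted.
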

	\begin{proof}
Recall that $\Delta=BC$, where $B$ and $C$ are symmetric. We conclude that $\Delta^{m} B=B(\Delta^T)^{m}$ for any $m\in \mathbb{N}$,  and $F(\Delta)B=BF(\Delta)^{T}$ for any polynomial $F(x)\in \F_p[x]$. Hence,
$PB=BP^{T}$ and then $BP^{-T}B^{-1}=P^{-1}$. Then
		\begin{equation}\label{conjsP}
			\tau s(P) \tau^{-1}=
			\begin{pmatrix}
				P &0\\ 0& BP^{-T}B^{-1}
			\end{pmatrix}=
			\begin{pmatrix}
				P &0\\ 0& P^{-1}
			\end{pmatrix}.
		\end{equation}
We also have
		\begin{equation}\label{conjlCN}\tau  
			l(CN)\tau^{-1}=
			\begin{pmatrix}
				I & 0\\ \Delta N &I
			\end{pmatrix}.
		\end{equation}
Thus $\tau s(P) \tau^{-1},\tau  
			l(CN)\tau^{-1}\in \SL_2(\F_p[\Delta])$.

Note that $s(P), l(CN)\in \Theta_p$ is equivalent to $\tau s(P) \tau^{-1},\tau  
			l(CN)\tau^{-1}\in \Theta$.
Recall $\Phi$ in \eqref{group-iso}, and let $\pi_i$ be the projection of $\prod_{i=1}^n \SL_2(\F_p(\alpha_i))$.  It suffices to prove for each $i\in \mathcal{I}$ that
\begin{equation}\label{maptoI}
  \pi_i\circ\Phi(\tau s(P) \tau^{-1})=\pi_i\circ\Phi(\tau  
			l(CN)\tau^{-1})=I.
\end{equation}

Write $P=F_1(\Delta)$ with $F_1(0)=F_1(-1)=1$ and $N=F_2(\Delta)$ with $F_2(0)=F_2(-1)=0$. Then by \eqref{ring-isom}, \eqref{conjsP} and \eqref{conjlCN}, we have
		\begin{equation*}
			\pi_i\circ\Phi(\tau s(P) \tau^{-1})=\pmat{F_1(\alpha_i)}{0}{0}{F_1(\alpha_i)^{-1}},\ \pi_i\circ\Phi(\tau  
			l(CN)\tau^{-1})=\pmat{1}{0}{\alpha_iF_2(\alpha_i)}{1}.
		\end{equation*}
Therefore, \eqref{maptoI} holds for $i\in \mathcal{I}$. This completes the proof of this lemma.
	\end{proof}

	\begin{lemma}[Proposition 5.4 in \cite{Helfgott}]\label{sl2-facts}
		Let $k$ be a finite field of odd characteristic. Then the smallest nontrivial complex representation of $\SL_2(k)$ has dimension at least $\frac{1}{2}(|k| - 1)$. 
	\end{lemma}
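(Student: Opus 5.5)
This lemma is quoted from Helfgott, so the plan is to recall the standard argument rather than invent a new one. Write $k=\F_q$ with $q$ odd, and let $\sigma$ be a nontrivial irreducible complex representation of $\SL_2(k)$.

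First I will show that $\sigma$ is nontrivial on the unipotent subgroup $U=\{\smallpmat{1}{t}{0}{1}: t\in k\}$. The group $\SL_2(k)$ is generated by $U$ and its transpose $U^T$, and $U^T$ is conjugate to $U$ via $J=\smallpmat{0}{1}{-1}{0}$. So if $\sigma|_U$ were trivial, $\sigma$ would be trivial on all of $\SL_2(k)$. (When $q\le 3$ the bound $\frac12(q-1)\le 1$ is trivial anyway.)

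Next I decompose $\sigma|_U$ into characters of $U\cong (k,+)$. Every such character has the form $\psi_a(t)=\psi(\mathrm{tr}(at))$ for some $a\in k$, where $\psi$ is a fixed nontrivial additive character. By the previous step, some $\psi_a$ with $a\neq0$ occurs in $\sigma|_U$.

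The diagonal torus acts on $U$ by conjugation: $\smallpmat{x}{0}{0}{x^{-1}}$ sends $\smallpmat{1}{t}{0}{1}$ to $\smallpmat{1}{x^2t}{0}{1}$. Hence if $\psi_a$ occurs in $\sigma|_U$, then $\psi_{ax^2}$ occurs for every $x\in k^*$. The set $\{ax^2: x\in k^*\}$ is a coset of the squares in $k^*$, so it has exactly $(q-1)/2$ elements, using that $q$ is odd. These give pairwise distinct characters of $U$, and distinct characters contribute linearly independent eigenspaces. Therefore $\dim\sigma\ge (q-1)/2$.

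No step here is a serious obstacle. The only points that need care are the generation of $\SL_2(k)$ by $U$ and $U^T$, which follows from the Bruhat decomposition, and the count of the square-class orbit, which is where odd characteristic is used.
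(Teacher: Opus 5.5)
Your proof is correct. It is the standard Frobenius-type argument: nontriviality on $U$ because $\SL_2(k)=\langle U, JUJ^{-1}\rangle$, then the diagonal torus permutes the $U$-eigenspaces along a square class of size $(|k|-1)/2$. This is the argument behind the cited Proposition 5.4 of Helfgott; the paper gives no proof of its own and simply quotes that result. Two harmless nits: conjugating by $\mathrm{diag}(x,x^{-1})$ actually sends the $\psi_a$-eigenspace to the $\psi_{ax^{-2}}$-eigenspace, which lies in the same coset of squares. Also, your caveat about $q\leqslant 3$ is unnecessary, since $U$ and $U^T$ generate $\SL_2$ over every field.
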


Now we explain that Proposition \ref{key-nontrivial} can be deduced from the following.
\begin{proposition}\label{key-nontrivial 2}
		Suppose that $Q$ is pan-generic and that $p$ is sufficiently large in terms of $Q$. Let $\rho : \Sp_8(\Z/p\Z)  \rightarrow \operatorname{U}(\ell^2((\Z/p\Z)^4))$ be the mod $p$ Weil representation. Let $r \in (\Z/p\Z)^*$, and let $\tilde \rho$ be the twist of $\rho$ by dilation by $r$, that is to say $\tilde\rho(x) = \rho(x^{(r)})$. Then $\tilde \rho|_{\Theta_p}$ has no nontrivial invariant vector.
	\end{proposition}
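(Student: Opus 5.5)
The plan is to argue by contradiction. Suppose $f\in\ell^2((\Z/p\Z)^4)$ is nonzero and satisfies $\tilde\rho(h)f=f$ for every $h\in\Theta_p$. We will show that the elements supplied by Lemma \ref{usl}, together with the full $\SL_2$ factors on the $\mathcal{J}$-components, force $f=0$. Two facts are used throughout.

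\begin{itemize}
\item By \eqref{gr-def}, $s(P)^{(r)}=s(P)$ and $l(CN)^{(r)}=l(rCN)$. Since $rY_p=Y_p$, the elements $s(P)$ and $l(rCN)$ with $P\in X_p$, $N\in Y_p$ all lie in $\Theta_p$, and Proposition \ref{weil-prop} applies to them directly.
\item Since $\Delta^m B=B(\Delta^T)^m$ and $B,C$ are symmetric, each $CF(\Delta)$ is symmetric, and $\Delta$ is self-adjoint for the bilinear form $(x,y)\mapsto x^TCy$. Hence the spaces $W_i=\ker f_i(\Delta)$ in \eqref{deco-space} are mutually $C$-orthogonal.
\end{itemize}

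\textbf{Step 1 (quadratic modulations).} The set $\{l(rCN):N\in Y_p\}$ is an abelian group isomorphic to $(Y_p,+)$, and $\rho$ is a genuine representation. So $N\mapsto\xi(l(rCN))$ is an additive character of $Y_p$; write it as $e_p(\lambda(N))$ with $\lambda$ linear. By the modulation formula in Proposition \ref{weil-prop}, $f$ is supported on the set
\[
\{x:\ -\tfrac r2 x^TCNx=\lambda(N)\ \text{for all }N\in Y_p\}.
\]
Decompose $x=\sum_i x_i$ with $x_i\in W_i$. For $i\in\mathcal{I}$ we have $\alpha_i\in\{0,-1\}$, so every $N\in Y_p$ kills $W_i$. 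Combined with $C$-orthogonality, this gives $x^TCNx=\sum_{j\in\mathcal J}x_j^TCNx_j$, which depends only on $x_{\mathcal J}$.

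\textbf{Step 2 (dilations).} Fix $t\in\F_p^*$. By the Chinese remainder theorem choose $F$ with $F(\alpha_j)=t$ for $j\in\mathcal J$ and $F(0)=F(-1)=1$. Then $P=F(\Delta)\in X_p$ acts as $t$ on $\bigoplus_{\mathcal J}W_j$ and as $1$ on $\bigoplus_{\mathcal I}W_i$. Invariance under $s(P)$ makes $|f|$ invariant under $x_{\mathcal J}\mapsto t^{-1}x_{\mathcal J}$. This rescales the quadratic quantity in Step 1 by $t^{-2}$, so on the support we get $\lambda(N)=t^{-2}\lambda(N)$ for all $t$, hence $\lambda\equiv0$. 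Therefore $x_{\mathcal J}^TCNx_{\mathcal J}=0$ for all $N\in Y_p$ whenever $f(x)\neq0$.

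Now choose $N=F(\Delta)$ with $F(0)=F(-1)=0$ and $F$ arbitrary modulo $f_j$ at a single $j\in\mathcal J$, and zero at the other components. On $W_j\cong\F_p(\alpha_j)$ the form $x_j^TCF(\Delta)x_j$ is then a trace-type form $\mathrm{Tr}(\beta\,\cdot\,q_j(x_j))$ with $\beta$ ranging over $\F_p(\alpha_j)$. The restriction of $C=B^{-1}\Delta$ to $W_j$ is nondegenerate because $\alpha_j\neq0$. Hence all of these vanish only if $x_j=0$, and so $f$ is supported on $\bigoplus_{\mathcal I}W_i$.

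\textbf{Step 3 (partial Fourier transform).} The set $\mathcal J$ is nonempty: $\Delta$ has four distinct eigenvalues, and at most two of them lie in $\{0,-1\}$. The group $\Theta$ contains $\Phi^{-1}$ of the Weyl element $\smallpmat{0}{1}{-1}{0}$ in each $\SL_2(\F_p(\alpha_j))$, $j\in\mathcal J$. Conjugating back by $\tau$, and twisting by $r$, gives an element of $\Theta_p$. I expect it to act on $\ell^2$ of $\bigoplus_{\mathcal J}W_j\oplus\bigoplus_{\mathcal I}W_i$ as a partial Fourier transform in the $x_{\mathcal J}$ variable, up to a dilation and a unimodular constant. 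I would verify this by factoring that element as $s(E)\,J\,l(\ast)$-type products and using the formulas of Proposition \ref{weil-prop}.

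Applying this element to $f$, which is supported on $x_{\mathcal J}=0$, produces a function that is constant and nonzero in $x_{\mathcal J}$ on each fibre where $f\neq0$. This contradicts invariance, because the support constraint from Step 2 still applies to $\tilde\rho(h)f=f$. Hence $f=0$.

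The main obstacle is Step 3. One must write the conjugated Weyl element explicitly inside $\Sp_8$ and check that $\rho$ of it really is a partial Fourier transform on the $\mathcal J$-block, in coordinates adapted to the possibly non-orthogonal splitting and to the $\tau$-conjugation. If this proves messy, I would instead apply Steps 1--2 a second time to the conjugate subgroup $J\Theta_pJ^{-1}$, whose invariant vectors are the Fourier transforms of those of $\Theta_p$. The same argument then shows that $\hat f$ is also supported on a proper subspace, and $f$ and $\hat f$ cannot both be so concentrated without vanishing.
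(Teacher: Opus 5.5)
Your argument is correct, but it finishes differently from the paper. Steps 1--2 prove the paper's Claim 1, namely $\Supp f\subseteq W_{\mathcal I}:=\bigoplus_{i\in\mathcal I}W_i$, by a slightly different mechanism.

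\begin{itemize}
\item The paper uses dilations acting by an arbitrary $h(\Delta)$ on a single $W_j$ to move $w_j$ to $\lambda w$. It then uses the one modulation $N=\Delta(\Delta+I)$, together with polarization and orthogonality.
\item You use a scalar dilation of the whole $\mathcal J$-block to force the level $t_N$ to be $0$. You then let $N$ range over $Y_p$ and appeal to nondegeneracy of the trace form on $\F_p(\alpha_j)$.
\end{itemize}

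The real divergence is in killing $f$ on $W_{\mathcal I}$. The paper (Claim 2) takes $P_0\in X_p$ with $\det P_0$ a non-residue (Lemma~\ref{quad-non}). Since $P_0$ fixes $W_{\mathcal I}$ pointwise, the explicit multiplier $\xi(s(E))=(\det E/p)$ in Proposition~\ref{weil-prop} gives $f=-f$ there. Your Weyl-element route never uses that normalisation of $\xi$; it needs only the generator formulas up to unimodular constants. The cost is that it uses the full $\SL_2$ factors of $\Theta$ rather than just their Borel parts. It also requires evaluating $\rho$ on a non-generator, which the paper's two-line argument avoids.

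You left that evaluation as an expectation, and it does go through.

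\begin{itemize}
\item Let $\Pi\in\F_p[\Delta]$ be the idempotent projecting onto $W_{\mathcal J}:=\bigoplus_{j\in\mathcal J}W_j$ along $W_{\mathcal I}$. Put $X=r^{-1}\Pi B$ and $Y=-rB^{-1}\Pi$; both are symmetric because $\Pi B=B\Pi^T$.
\item The twist $h^{(r)}$ of your element $h\in\Theta_p$ (the Weyl element on each $\mathcal J$-factor, $I$ on each $\mathcal I$-factor) equals $\smallpmat{I-\Pi}{X}{Y}{I-\Pi^T}=u(X)\,l(Y)\,u(X)$. Here $u(X)=\smallpmat{I}{X}{0}{I}=J^{-1}l(-X)J$.
\item $\Delta$ is self-adjoint for $B^{-1}$ as well as for $C$, so the $W_i$ are $B^{-1}$-orthogonal. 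Hence $B^{-1}W_{\mathcal J}=W_{\mathcal I}^{\circ}$ and $B^{-1}W_{\mathcal I}=W_{\mathcal J}^{\circ}$, with $\circ$ the annihilator for $x^Ty$.
\item Consequently the Fourier transform of Proposition~\ref{weil-prop} splits along $W_{\mathcal I}\oplus W_{\mathcal J}$. Both $e_p(-\tfrac12 x^TYx)$ and the Fourier multiplier of $u(X)$ depend only on the $\mathcal J$-variable.
\item Completing the square (a nondegenerate Gauss sum on $W_{\mathcal J}$), $\rho(h^{(r)})$ sends $g(x_{\mathcal I})\delta_0(x_{\mathcal J})$ to a nonzero constant times $g(x_{\mathcal I})$. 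Since $W_{\mathcal J}\neq 0$, your contradiction follows.
\end{itemize}

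Your fallback also works, but not for the reason you state. A function and its Fourier transform can both live on proper subspaces (take $f=1_U$). What saves it here is transversality. Support of $f$ in $W_{\mathcal I}$ makes $\hat f$ invariant under translation by $W_{\mathcal I}^{\circ}=B^{-1}W_{\mathcal J}$. The second pass confines $\hat f$ to $B^{-1}W_{\mathcal I}$, which meets $B^{-1}W_{\mathcal J}$ only in $0$.
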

	\begin{proof}[Proof of Proposition \ref{key-nontrivial} assuming Proposition \ref{key-nontrivial 2}]
		Let $\eta:\Gamma_p\to GL(W_0)$ be an irreducible subrepresentation of $\tilde{\rho}|_{\Gamma_p}$, where $W_0$ is the invariant subspace. If $\eta|_{\Theta_p}$ is
		trivial, then every nonzero vector in $W_0$ would be invariant under $\tilde \rho|_{\Theta_p}$, contradicting to Proposition \ref{key-nontrivial 2}. Hence $\eta|_{\Theta_p}$ is non-trivial. Recall \eqref{defineTheta}, then $\Theta_p$ is isomorphic to $\prod_{i \in \mathcal{I}}\{I\} \times \prod_{i \in \mathcal{J}}\SL_2(\F_p(\alpha_i))$. Considering the tensor product, we conclude from 
		Lemma \ref{sl2-facts} that the irreducible subrepresentation of $\eta|_{\Theta_p}$ has dimension at least $(p-1)/2$. Therefore we have 
			$\dim \eta \geqslant  (p-1)/2$,
		which implies Proposition \ref{key-nontrivial}.
	\end{proof}

	\begin{lemma}\label{quad-non}
		Suppose $p$ is a prime larger than $3$. Then there exists $P_0\in X_p$ such that $\det P_0$ is a quadratic non-residue modulo $p$.
	\end{lemma}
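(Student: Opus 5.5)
The plan is to build $P_0=F(\Delta)$ by prescribing the values of $F$ at the roots $\alpha_i$ through the Chinese remainder theorem, and then to read off $\det P_0$ as a product of norms. I use only the standing assumption of this section that $\overline{\rho}_\Delta$ has distinct roots in $\overline{\F}_p$, together with $p>3$ odd; no further largeness of $p$ is needed. The one input I would state openly is this: the argument needs $\overline{\rho}_\Delta$ to have at least one root outside $\{0,-1\}$, which the standing separability hypothesis provides. Without separability the claim can genuinely fail. For example, if every eigenvalue of $\Delta$ mod $p$ lay in $\{0,-1\}$, then every $F\in X_p$ would satisfy $\det F(\Delta)=\prod_j F(\lambda_j)=1$.

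\emph{Setup and the interpolation step.} Since $\deg\overline{\rho}_\Delta=4$ with distinct roots, at most two roots lie in $\{0,-1\}$. Hence $\mathcal{J}$ is nonempty; fix $j_0\in\mathcal{J}$. For each $j\in\mathcal{J}$ the polynomial $f_j$ is irreducible with roots outside $\{0,-1\}$. It follows that the polynomials
\[
x,\quad x+1,\quad f_j\ (j\in\mathcal{J})
\]
are pairwise coprime in $\F_p[x]$. Also, $\F_p[x]/(f_j)\cong\F_p(\alpha_j)$ via $x\mapsto\alpha_j$. The norm map $N:\F_p(\alpha_{j_0})^*\to\F_p^*$ is surjective for finite fields, so pick $\beta\in\F_p(\alpha_{j_0})^*$ with $N(\beta)$ a quadratic non-residue. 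By the Chinese remainder theorem, choose $F\in\F_p[x]$ with:
\begin{itemize}
\item $F\equiv 1 \pmod{x}$ and $F\equiv 1 \pmod{x+1}$;
\item $F(\alpha_j)=1$ for $j\in\mathcal{J}\setminus\{j_0\}$;
\item $F(\alpha_{j_0})=\beta$.
\end{itemize}

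\emph{Verification.} Set $P_0:=F(\Delta)$.
\begin{itemize}
\item The conditions at $x$ and $x+1$ give $F(0)=F(-1)=1$.
\item For $i\in\mathcal{I}$ we have $\alpha_i\in\{0,-1\}$, so $F(\alpha_i)=1$. Thus every coordinate of $\Phi(P_0)=(F(\alpha_i))_i$ is nonzero. Since $\Phi$ in \eqref{ring-isom} is a ring isomorphism, $P_0\in(\F_p[\Delta])^\times$, and therefore $P_0\in X_p$.
\item For the determinant, the eigenvalues of $F(\Delta)$ over $\overline{\F}_p$ are the $F(\lambda)$ with $\lambda$ running over the roots of $\overline{\rho}_\Delta$. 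The roots of $f_i$ are the Galois conjugates of $\alpha_i$, so
\[
\det P_0=\prod_{i=1}^n N_{\F_p(\alpha_i)/\F_p}\big(F(\alpha_i)\big).
\]
Every factor equals $1$ except the one with $i=j_0$, which equals $N(\beta)$, a non-residue.
\end{itemize}

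The only point needing care is the separability input just described, which guarantees both the nonemptiness of $\mathcal{J}$ and the coprimality used in the Chinese remainder step. The surjectivity of the norm, the interpolation, and the determinant identity are all routine.
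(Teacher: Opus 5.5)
Your proposal is correct and follows essentially the same route as the paper. Like the paper, you fix $j\in\mathcal{J}$, use surjectivity of the norm $N_{\F_p(\alpha_j)/\F_p}$ to obtain a non-residue, and apply the Chinese remainder theorem to get $F\equiv 1$ modulo $x(x+1)$ and the other $f_i$, so that $\det F(\Delta)$ is that single norm. The differences are cosmetic: you compute the determinant through eigenvalues over $\overline{\F}_p$ rather than by restricting to the $W_i$, and you state explicitly that $\mathcal{J}\neq\emptyset$ follows from the standing separability assumption, which the paper uses tacitly.
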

	\begin{proof}
	Recall the notations $W_i$ in \eqref{deco-space}.  Recall \eqref{definecalJ}, and
		fix an index $j\in \mathcal{J}$. The main task is to prove that there exists $g(x)\in \F_p[x]$ such that $\det(g(\Delta)|_{W_j})$ is a quadratic non-residue modulo $p$. Suppose that $f_j(x)=\prod_{k=1}^{d}(x-\beta_k)$, where $d=\deg(f_j)$ and we may assume that $\beta_1=\alpha_j$. Then
$$\det(g(\Delta)|_{W_j})=\prod_{k=1}^dg(\beta_k)=N_{L/\F_p}(g(\alpha_j)),$$
where $L=\F_p(\alpha_j)$. 

Let $a_0\in \F_p$ be a quadratic non-residue. Since $N_{L/\F_p}(\cdot)$ is surjective, there exists $\gamma\in L$ such that $N_{L/\F_p}(\gamma)=a_0$. 
Write $\gamma=\sum_{t=0}^{d-1}c_t\alpha_j^t$ with all $c_t\in \F_p$. Taking $g(x)=\sum_{t=0}^{d-1}c_tx^t\in \F_p[x]$, we have $g(\alpha_j)=\gamma$ and thus $\det(g(\Delta)|_{W_j})=a_0$. Now choose $F(x)$ such that 
$F(x)\equiv g(x)\pmod{f_j(x)}$, $F(x)\equiv 1\pmod{x(x+1)}$ and $F(x)\equiv 1\pmod{f_i(x)}$ for all $i\not=j$.  
Then $\det(F(\Delta))=\prod_{i}\det(F(\Delta)|_{W_i})=\det(g(\Delta)|_{W_j})=a_0$. We complete the proof by choosing $P_0=F(\Delta)$. 
	\end{proof}
	
	\begin{lemma}\label{invariant}
		 $\Theta_p$ is invariant under the dilation defined in  \eqref{gr-def}.
	\end{lemma}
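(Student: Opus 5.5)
The plan is to realise the dilation $h\mapsto h^{(r)}$ as conjugation by a fixed block-diagonal matrix. This matrix commutes with $\tau$ and acts compatibly with the isomorphism $\Phi$ of \eqref{group-iso}, which reduces the claim to a statement about each factor $\SL_2(\F_p(\alpha_i))$ separately.

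First, put $D_r:=\smallpmat{I}{0}{0}{rI}\in\GL_8(\F_p)$. A direct block multiplication gives, for $h=\smallpmat{\alpha}{\beta}{\gamma}{\delta}$,
\[
D_r h D_r^{-1}=\pmat{\alpha}{r^{-1}\beta}{r\gamma}{\delta}=h^{(r)}.
\]
The matrix $\tau=\smallpmat{I}{0}{0}{B}$ from \eqref{tau-def} is also block diagonal, and its lower block commutes with the scalar $rI$, so $D_r\tau=\tau D_r$. Hence for $h\in\Theta_p$, say $h=\tau^{-1}k\tau$ with $k\in\Theta$,
\[
h^{(r)}=D_r\tau^{-1}k\tau D_r^{-1}=\tau^{-1}(D_rkD_r^{-1})\tau=\tau^{-1}k^{(r)}\tau .
\]
By \eqref{defineThetap}, it therefore suffices to show that $\Theta$ is closed under $k\mapsto k^{(r)}$. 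Here $k^{(r)}$ is the same dilation applied to a $2\times 2$ matrix over $\F_p[\Delta]$, which we view as an $8\times8$ matrix.

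Next, I transport the dilation through $\Phi$. An element $k\in\SL_2(\F_p[\Delta])$ has entries in $\F_p[\Delta]$, and $r\in\F_p^*$ acts on them as the scalar $rI\in\F_p[\Delta]$. The entries of $k^{(r)}$ are $\alpha$, $r^{-1}\beta$, $r\gamma$, $\delta$, so $k^{(r)}\in\SL_2(\F_p[\Delta])$. Moreover $\Phi$ is a ring homomorphism fixing the scalars $\F_p$, hence
\[
\Phi(k^{(r)})=\big(\pi_i\Phi(k)^{(r)}\big)_{i=1}^n ,
\]
where on each factor $\SL_2(\F_p(\alpha_i))$ the dilation is conjugation by $\smallpmat{1}{0}{0}{r}\in\GL_2(\F_p)$.

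Finally, I check that each factor in \eqref{defineTheta} is preserved. For $i\in\mathcal{I}$ we have $I^{(r)}=I$. For $i\in\mathcal{J}$, conjugation by an element of $\GL_2(\F_p(\alpha_i))$ preserves determinants, so it maps $\SL_2(\F_p(\alpha_i))$ onto itself. Thus $\Phi(k^{(r)})$ again lies in $\prod_{i\in\mathcal I}\{I\}\times\prod_{i\in\mathcal J}\SL_2(\F_p(\alpha_i))$, so $k^{(r)}\in\Theta$, and the lemma follows.

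There is no real obstacle here. The only point needing care is the compatibility in the first step, namely that $D_r$ commutes with $\tau$. This holds because $rI$ is central in the lower block, and it is exactly what allows the dilation on $\Sp_8$ to be moved through the conjugation by $\tau$ and through $\Phi$.
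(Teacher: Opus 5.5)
Your proof is correct and follows the paper's argument. The paper also checks directly that dilation commutes with conjugation by $\tau$; your observation that $h^{(r)}$ is conjugation by $\mathrm{diag}(I,rI)$, which commutes with $\tau$, is a tidier packaging of that block computation. It then likewise reduces to $\Theta$ and transports the dilation coordinatewise through $\Phi$, so that the $\mathcal{I}$-coordinates stay equal to $I$. The one step you leave implicit is that closure under every $r$ gives $D_r(\Theta_p)=\Theta_p$ by applying the result with $r^{-1}$; the paper covers this by noting that the dilation is an automorphism.
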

	\begin{proof}
		For each $r\in \mathbb{F}_p^*$ and any ring $R$ over $\mathbb{F}_p$, we may consider the dilation operator $D_r$ on $\SL_{2}(R)$ with the same definition as \eqref{gr-def}. For convenience, we shall use the operator notation $D_r$ without distinguishing the ring on which it acts. 
Note that $D_{r}$ is an automorphism of $ \SL_2(R)$. Thus it suffices to prove that $D_r(\Theta_p)\subseteq \Theta_p$.

A direct verification reveals that 
	\begin{equation*}\pmat{I}{0}{0}{B^{-1}}\pmat{\alpha}{r^{-1} \beta}{r\gamma}{\delta}\pmat{I}{0}{0}{B}
= \pmat{\alpha}{r^{-1} \beta B}{rB^{-1}\gamma}{B^{-1}\delta B}.\end{equation*} 
In particular, $D_{r}$ commutes with the conjugation $x\mapsto\tau^{-1} x\tau (x\in \SL_{2}(R))$. Thus, $D_r(\Theta_p)\subseteq \Theta_p$ is equivalent to 
$D_r(\Theta)\subseteq \Theta$. For $M=\pmat{F_{11}}{F_{12}}{F_{21}}{F_{22}}$ with $F_{ij}(x)\in \F_p[x]$ and $\alpha\in \overline{\F}_p$, we use the notations
$$M(\Delta)=\pmat{F_{11}(\Delta)}{F_{12}(\Delta)}{F_{21}(\Delta)}{F_{22}(\Delta)}\quad \textrm{ and }\quad 
M(\alpha)=\pmat{F_{11}(\alpha)}{F_{12}(\alpha)}{F_{21}(\alpha)}{F_{22}(\alpha)}.$$
Recall the isomorphisms in \eqref{ring-isom} and \eqref{group-iso}, we conclude that 
		for $M(\Delta)\in \SL_2(\F_p[\Delta])$,
		\begin{equation*}
			\Phi(D_r(M(\Delta)))=\big(D_r(M(\alpha_i)) \big)_{i=1}^n.
		\end{equation*} 
For $M(\Delta)\in \Theta$ and $i\in \mathcal{I}$, we have $M(\alpha_i)=I$ and then $D_r(M(\alpha_i))=I$. Thus, $D_r(M(\Delta))\in \Theta$. This proves $D_r(\Theta)\subseteq \Theta$, and consequently $D_r(\Theta_p)\subseteq \Theta_p$. This completes the proof of this lemma.
	\end{proof}

\begin{proof}[Proof of Proposition \ref{key-nontrivial 2}] 
Due to Lemma \ref{invariant}, it suffices to consider the case $\tilde\rho|_{\Theta_p} = \rho|_{\Theta_p}$. Suppose
		\begin{equation}\label{invariance-cond}  \rho(g) f = f\end{equation} 
		for some $f:V\to\mathbb{C}$ and any $g \in \Theta_p$. 
		Our aim is to show that $f$ is identically zero.

		We formulate the action of $l(CN)$ from Proposition \ref{weil-prop}, namely
		\begin{equation*}
			\rho(l(CN)) f(x) =\xi(l(CN)) e_p(-\frac{1}{2} x^T CN x) f(x)
		\end{equation*}
		for some unit complex number $\xi(l(CN))$ independent of $x$. We shall choose $N\in Y_p$. By Lemma \ref{usl} we know that $l(CN)\in \Theta_p$, and by \eqref{invariance-cond},  
		\begin{equation*} \rho(l(CN)) f(x) = f(x).\end{equation*} 
It follows from above that there exists $t_N\in \mathbb{F}_{p}$ such that 
		\begin{equation}\label{supp f}
			\Supp({f})\subseteq \{x\in \mathbb{F}_{p}^4:x^TCNx=t_N\}.
		\end{equation}

\medskip

\noindent {\bf Claim 1.} $\Supp({f})\subseteq \oplus_{i\in \mathcal{I}}W_i$. 

Otherwise we assume that $v\in \Supp({f})$ but $v\not\in \oplus_{i\in \mathcal{I}}W_i$. Thus we can write $v$ in the form $v=\sum_{i}w_i$ with $w_i\in W_i$ for all $i$ and $w_j\not=0$ for some $j\in \mathcal{J}$. Now we fix this $j$. Note that $g(\Delta)(w_j)$ are pairwise distinct for $g(x)\in \F_p[x]/(f_j(x))$ since $f_j$ is minimal. Thus, by counting the number of elements we get that $g(\Delta)(w_j)$ runs over $W_j$ as $g(x)\in \F_p[x]/(f_j(x))$.

Now choose any $w\in W_j\setminus\{0\}$ and $\lambda\in \F_p^\ast$. From above, there exists $h(x)$ such that $h(\Delta)(w_j)=\lambda w$. Then there exists $F(x)\in \F_p[x]$ such that $F(x)\equiv h(x)\pmod{f_j(x)}$, $F(x)\equiv 1\pmod{x(x+1)}$ and $F(x)\equiv 1\pmod{f_i(x)}$ for all $i\not=j$. Then $F(\Delta)\in X_p$ and write $P=F(\Delta)$. By Lemma \ref{usl}, $s(P)\in \Theta_p$.  Moreover,
$$Pv=F(\Delta)v=\sum_{i}F(\Delta)(w_i)=\lambda w+\sum_{i\not=j}w_i.$$

By the dilation property in Proposition \ref{weil-prop}, there exists a unit complex number $\xi(s(P))$ such that
		\begin{equation*}
			\rho(s(P))f(x)=\xi(s(P))f(P^{-1}x).
		\end{equation*}
Combining this with \eqref{invariance-cond}, we obtain
		\begin{equation}\label{-1}
			\xi(s(P))f(P^{-1}x)=f(x).
		\end{equation}
Inserting $x=Pv$, we obtain $f(\lambda w+\sum_{i\not=j}w_i)\not=0$, that is $\lambda w+\sum_{i\not=j}w_i\in \Supp({f})$. 

We take $N=\Delta(\Delta+I)$ in \eqref{supp f}, so there exists $t_0\in \F_p$ such that 
		\begin{equation*}
			(\lambda w+\sum_{i\not=j}w_i)^{{T}}C\Delta(\Delta+I)(\lambda w+\sum_{i\not=j}w_i)=t_0.
		\end{equation*}
The left hand side is a quadratic polynomial in $\lambda$ and $p>3$, thus 
\begin{equation}\label{forw}
			 w^{{T}}C\Delta(\Delta+I) w=0.
		\end{equation}
Then we conclude that \eqref{forw} holds for all $w\in W_j$. Note that $u_1^{{T}}C\Delta(\Delta+I) u_2=u_2^{{T}}C\Delta(\Delta+I) u_1$ since $C\Delta(\Delta+I)$ is symmetric. For each  $u_1,u_2\in W_j$, applying \eqref{forw} with $w=u_1$, $u_2$, $u_1+u_2$ respectively,  we obtain
		\begin{equation}\label{cc}
			u_1^{T}C\Delta(\Delta+I)u_2=0.
		\end{equation} 
		Consider the bilinear form: $\phi(x,y)=x^TC\Delta(\Delta+I)y$ on $\overline{\F}_p^4\times \overline{\F}_p^4$.
Then \eqref{cc} can be restated as 		
\begin{equation}\label{jtoj}
			\phi(W_j,W_j)=0.
		\end{equation} 
		
We note that $\Delta$ is self-adjoint with respect to $\phi$ since
		\begin{equation*}
			\phi(\Delta x, y) = x^T \Delta^T C\Delta(\Delta+I) y = x^T C\Delta(\Delta+I)\Delta y= \phi(x,  \Delta y).
		\end{equation*}
Let 
$$V_\alpha=\{x\in \overline{\F}_p^4:\ \Delta x=\alpha x\}.$$
Then $\phi(V_\alpha,V_\beta)=0$ for distinct eigenvalues $\alpha$ and $\beta$ of $\Delta$. The reason is that for $v_\alpha\in V_\alpha$ and $v_\beta\in V_\beta$, 
		\begin{equation*}
			\alpha \phi(v_\alpha,v_\beta)=\phi(\Delta v_\alpha,v_\beta)=\phi(v_\alpha,\Delta  v_\beta)=\beta \phi(v_\alpha,v_\beta),
		\end{equation*} 
		implying $\phi(v_\alpha,v_\beta)=0$. Note that 
$$W_i\subseteq \bigoplus_{\substack{\alpha \\ f_i(\alpha)=0}} V_{\alpha}.$$
Then we conclude that $\phi(W_i,W_{i'})=0$ for $i\not=i'$. Combining this with \eqref{jtoj}, we have
		\begin{equation}\label{jtoall}
			\phi(V,W_j)=0,
		\end{equation} 
which implies $\Delta^2(\Delta+I)(W_j)=0$. This is a contradiction since $(f_j(x),x^2(x+1))=1$. This confirms Claim 1.

\medskip

\noindent {\bf Claim 2}. $\oplus_{i\in \mathcal{I}}W_i\subseteq f^{-1}(0)$.

By Lemma \ref{quad-non}, we choose $P_0\in X_p$ and $\det P_0$ is quadratic non-residue mod $p$. Then $s(P_0)\in \Theta_p$ by Lemma \ref{usl}.  By the action of dilation in Proposition \ref{weil-prop}, for any $x\in V$ we have 
		
		\begin{equation}\label{di}
			\rho(s(P_0))f(x)=\Big(\frac{\det P_0}{p}\Big)\, f(P_0^{-1}x)=-f(P_0^{-1}x).
		\end{equation}
		Since $P_0$ acts trivially on $\oplus_{i\in \mathcal{I}}W_i$, for every
		$x\in \oplus_{i\in \mathcal{I}}W_i$ we have $P_0^{-1}x=x$. Hence it follows from \eqref{di}  that for each $x\in \oplus_{i\in \mathcal{I}}W_i$,
		\[
		\rho(s(P_0))f(x)=-f(x).
		\]
Then by the invariance condition \eqref{invariance-cond}, we have
		$\rho(s(P_0))f(x)=f(x)$. Thus $f(x)\equiv 0$ for all $x\in \oplus_{i\in \mathcal{I}}W_i$. This completes the proof of Claim 2.

Note that Claim 1 and Claim 2 together imply $f\equiv 0$. This completes the proof of Proposition \ref{key-nontrivial 2}.

\end{proof}

\section{Proof of Theorem \ref{Main}}
We set
$$c_2=24.8(2+\frac{10000}{c_0}),$$
where $c_0$ is the constant in Proposition \ref{proposition51}.
\begin{proposition}
  \label{lemmadensityincrease}Suppose that $Q$ is a pan-generic translation invariant indefinite quadratic form. Suppose that there are no distinct primes $p_1,\ldots,p_8\in \mathcal{A}\cap I$ such that $Q(p_1,\ldots,p_8)=0$. Let $\delta=\delta_{b,W;I}(\mathcal{A})$. Suppose that $\delta\geqslant (\log\log X)^{-1}$.
 Then there exist $c\in \mathbb{Z}$, $q\in \mathbb{Z}^{+}$, a subinterval $J\subseteq I$ and a positive number $\alpha_0=\alpha_0(Q)$ such that
 \begin{align*}(c,Wq)=1,\ q\leqslant \alpha_0^{-1}\delta^{-c_2}, \ \ |J|\geqslant \alpha_0\delta^{1+c_2}|I|\ \ \textrm{ and }\ \ \delta_{c,Wq;J}(\mathcal{A})\geqslant \delta(1+\alpha_0\delta^{c_2}).
 \end{align*}
\end{proposition}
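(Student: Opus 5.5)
The plan is the standard density-increment argument, in the Roth/Green style adapted in \cite{Zhao2022}. Write $\delta_{b,W;I}(\mathcal{A})$ for the relative density $\sum_{x\in I,\,x\equiv b\,(W)}\Lambda(x)1_{\mathcal{A}}(x)\big/(Y/\phi(W))$. Set $\lambda(x)=1_{\mathcal{A}}(Wx+b)\Lambda_{b,W;I}(x)$ and write $\lambda=\lambda_0+\lambda_1$, where $\lambda_0=\delta\,\Lambda_{b,W;I}$ is the ``main'' part and $\lambda_1=\lambda-\delta\Lambda_{b,W;I}$ is the balanced function. Both satisfy the hypotheses \eqref{deflambda-i} up to a factor $2$. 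The argument then runs in four steps.

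\textbf{Step 1: count solutions.} Consider $\int_0^1 S(\theta;\lambda,\dots,\lambda)\,d\theta$. This counts weighted solutions of $Q(\mathbf{x})=0$ in $\mathcal{A}$. Since no solutions with distinct entries exist, the count is bounded by the number of degenerate solutions with $x_i=x_j$. Using rank $\geqslant 7$, these number $O(Y^{5+o(1)}\log^8X)$, which is $\ll \delta^8W^2Y^6\phi(W)^{-8}\cdot o(1)$ because $\delta\geqslant(\log\log X)^{-1}$. By Corollary \ref{extensioncorollary}, the term with all eight functions equal to $\lambda_0$ is $\delta^8\mathfrak{S}_Q(W)\mathfrak{I}_Q W^2Y^6\phi(W)^{-8}(1+o(1))\geqslant C_Q\delta^8W^2Y^6\phi(W)^{-8}/2$. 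Expanding multilinearly, some one of the $2^8-1$ remaining terms, each containing at least one $\lambda_1$, has modulus $\gg \delta^8W^2Y^6\phi(W)^{-8}$.

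\textbf{Step 2: localise to the major arcs.} Split $[0,1]=\mathfrak{M}(M')\cup\mathfrak{m}(M')$ and choose $M=\kappa\delta^{-c_3}$, with $c_3$ picked so that Proposition \ref{propminor} gives a minor-arc contribution $\ll W^2Y^6\phi(W)^{-8}(M')^{-10^{-4}c_0}$ that is at most half the lower bound from Step 1. Then on $\mathfrak{M}(M')$ we have $\int_{\mathfrak{M}}|S(\theta;\dots,\lambda_1,\dots)|\gg\delta^8W^2Y^6\phi(W)^{-8}$. Next bound the other seven factors by $L^\infty$-type or major-arc estimates: Lemma \ref{lemmasieve} and the rank $\geqslant 5$ decay of $\mathcal{K}$ give $\int_{I_{r,q}}\ll \phi(q)^{-2}(Y/W)^{-2}\times$(sup of one-variable sums). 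This yields some $q\leqslant M'$, $r$, and $|\eta|\leqslant Mq^{c_1}(Y/W)^{-2}$ with
\[
\Big|\sum_x\lambda_1(x)e\big((r/q+\eta)x\big)\Big|\gg \delta^{C}\frac{Y}{\phi(W)}.
\]
This requires the usual device of passing from the quadratic phase to a linear phase. It is done by Cauchy–Schwarz in four of the eight variables and by noting that the Fourier expansion of $S$ in one variable carries a factor $\hat\lambda_1$ evaluated at frequencies $r'/q+\eta'$ with $q\leqslant M'$.

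\textbf{Step 3: convert the Fourier bias into a density increment.} Partition $I$ into residue classes $c\pmod{Wq}$ (with $(c,Wq)=1$, since non-coprime classes contain essentially no primes) and into subintervals $J$ of length $\asymp \alpha_0\delta^{1+c_2}|I|$. On each such piece the phase $e((r/q+\eta)x)$ is nearly constant, because $|\eta|\,|J|/W\ll Mq^{c_1}\delta^{1+c_2}\ll\delta$ after choosing $c_2$ appropriately. The Siegel–Walfisz theorem (valid since $Wq\leqslant\log X\cdot\delta^{-c_2}\ll(\log X)^2$) shows that $\lambda_0$ has the expected mass $\delta|J|/\phi(Wq)$ on each piece. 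Hence $\sum_{\text{pieces}}|\sum\lambda_1|\gg\delta^{C}Y/\phi(W)$. Since $\lambda_1$ has total sum zero, a positive part dominates: some piece satisfies $\sum_{J,\,c}\lambda_1\geqslant\alpha_0\delta^{C}\,|J|/\phi(Wq)\cdot\delta$. This is exactly $\delta_{c,Wq;J}(\mathcal{A})\geqslant\delta(1+\alpha_0\delta^{c_2})$.

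\textbf{Main obstacle.} The main obstacle is Step 2: extracting a linear Fourier coefficient of $\lambda_1$ from the quadratic-phase sum on the major arcs, while keeping every loss polynomial in $\delta$. In particular, Lemma \ref{lemmasieve} must be combined with a large-sieve or Parseval bound for the remaining factors, so that the final exponent matches $c_2=24.8(2+10^4/c_0)$. I would first try Green's approach from \cite{Green2005}: bound all but one factor via the restriction estimate and $\phi(q)^{-2}$ decay, then sum over $r$ using the $\ell^2$ norm of $\hat\lambda_1$ on major arcs.
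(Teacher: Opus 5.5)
Your Step 1 and the first half of Step 2 are exactly what the paper does: bound the degenerate solutions, expand multilinearly, and remove $\mathfrak{m}(M')$ via Proposition \ref{propminor} with $M\asymp\delta^{-8}$. The gap is in passing from major-arc mass to a density increment, and the mechanism you sketch for it would not work as described.

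\begin{itemize}
\item Lemma \ref{lemmasieve} is an upper bound for the whole bilinear eight-fold sum. It cannot be applied to ``the other seven factors''.
\item $S(\theta;\boldsymbol{\varpi})$ does not factor over the variables. With $x_2,\dots,x_8$ fixed, the phase in $x_1$ is $\theta\bigl(a_{11}x_1^2+x_1\ell(x_2,\dots,x_8)\bigr)$, which is still quadratic. So no linear coefficient $\hat\lambda_1(r/q+\eta)$ appears.
\item Cauchy--Schwarz over four variables, as in Lemma \ref{lemmastand}, gives upper bounds for $|S|$. It does not give a lower bound for a Fourier coefficient of the balanced function.
\end{itemize}
The symptom is visible in Step 3: there you use $|\eta|\ll Mq^{c_1}W/Y$ as a linear frequency, while Step 2 produced $|\eta|\le Mq^{c_1}(Y/W)^{-2}$ as a quadratic one.

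The linear-phase intermediate is the wrong target. What does hold is the following. On $\theta=r/q+\eta$, the factor $e(rQ(\mathbf{x})/q)$ depends only on $\mathbf{x}\bmod q$. The factor $e(\eta Q(\mathbf{x}))$ is essentially constant on boxes of side about $(Y/W)M_0^{-O(1)}$. Hence $S$ is approximated by sums of products of the masses of the $\varpi_j$ on pieces of the form (residue class mod $q$) $\times$ (short interval). If the balanced function is unbiased on every such piece, the major-arc integral is small. This is the content of Lemma \ref{lemmadensityincmajor} (Lemma 6.1 of \cite{Zhao2022}), and the paper simply invokes it.

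Concretely, the paper argues as follows.
\begin{itemize}
\item Set $M_0=MM'^{1+c_1}=M^{2+10^4/c_0}$.
\item For $q\le M'$ one has $Mq^{c_1}(Y/W)^{-2}\le M_0q^{-1}(Y/W)^{-2}$. Hence $\mathfrak{M}(M')\subseteq\mathcal{M}(M_0)$ and $\mathbb{T}\setminus\mathcal{M}(M_0)\subseteq\mathfrak{m}(M')$.
\item Proposition \ref{propminor} with $M\asymp\delta^{-8}$ then gives $\bigl|\int_{\mathcal{M}(M_0)}S(\theta;\boldsymbol{\varpi})\,d\theta\bigr|\gg\delta^8W^2Y^6\phi(W)^{-8}$.
\item Lemma \ref{lemmadensityincmajor} then supplies $q\le M_0$, $|J|\gg\delta M_0^{-3.1}|I|$, and the increment $\delta(1+C_QM_0^{-3.1})$.
\end{itemize}
Since $M_0^{3.1}\asymp\delta^{-24.8(2+10^4/c_0)}=\delta^{-c_2}$, this is exactly where the stated exponent $c_2$ comes from. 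The proposition fixes that exponent. Nothing in your sketch tracks the losses, so nothing guarantees you end with an exponent at most $c_2$; to complete your route you would have to do that bookkeeping or cite the lemma.
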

 Theorem \ref{Main} follows immediately from Proposition \ref{lemmadensityincrease} with the same iterated argument at the end of \cite[Section 6]{Zhao2022}, and indeed one can take $c=(1+c_2)^{-1}$ in Theorem \ref{Main}. Hence we shall focus on the proof of Proposition \ref{lemmadensityincrease}.
For $\mathcal{A}\subseteq \mathcal{P}\cap [X/2,\,X]$, we define \begin{align*}
\nu_Q^\ast (\mathcal{A})=\sum_{\substack{x_1,\ldots,x_8\in \mathcal{A}\cap I\\ Q(\mathbf{x})=0 \\ \mathbf{x}\equiv b\pmod{W} }}\Lambda(x_1)\cdots \Lambda(x_8).\end{align*}
Let $\mathcal{A}'=\{y:\ Wy+b\in \mathcal{A}\cap I\}$. Then we can represent $\nu_Q^\ast(\mathcal{A})$ in the form
\begin{align*}
\nu_Q^\ast(\mathcal{A})=
\sum_{\substack{\mathbf{x}\\ Q(\mathbf{x})=0  }}\prod_{j=1}^8\Big(1_{\mathcal{A}'}\cdot\Lambda_{b,W;I}(x_j)\Big).
\end{align*}

For $(c,t)=1$ and $\mathcal{B}\subseteq \mathbb{N}$, we define
\begin{align*}\mathcal{N}_{c,t;I}(\mathcal{B})=
\sum_{\substack{x\equiv c\pmod{t}\\ x\in \mathcal{B}\cap I}}\Lambda(x).\end{align*}
Now we introduce the (relative) density
\begin{align}\label{definerelativedensity}\delta_{c,t;I}(\mathcal{B})
=\frac{\mathcal{N}_{c,t;I}(\mathcal{B})}{\mathcal{N}_{c,t;I}(\mathbb{N})}.\end{align}
Choosing $\delta=\delta_{b,W;I}(\mathcal{A})$,  we define $\varpi=1_{\mathcal{A}'}\cdot \Lambda_{b,W;I}-\delta\Lambda_{b,W;I}$. 
Note that \begin{align*}
\nu_Q(b,W;I)=
\sum_{\substack{\mathbf{x}\\ Q(\mathbf{x})=0  }}\prod_{j=1}^8\Lambda_{b,W;I}(x_j).
\end{align*}
Then on replacing $1_{\mathcal{A}'}\cdot \Lambda_{b,W;I}$ by $\varpi+\delta\Lambda_{b,W;I}$, we can represent $\nu_Q^\ast(\mathcal{A})-\delta^8\nu_Q(b,W;I)$ in the form
\begin{align}\label{representnu}
\nu_Q^\ast(\mathcal{A})-\delta^8\nu_Q(b,W;I)=\sum_{\varpi_1,\ldots,\varpi_8}\nu_Q^\ast(\varpi_1,\ldots,\varpi_8),
\end{align}
where we define 
\begin{align*}\nu_Q^\ast(\varpi_1,\ldots,\varpi_8)=
\sum_{\substack{\mathbf{x}\\ Q(\mathbf{x})=0  }}\prod_{j=1}^8\varpi_j(x_j)
\end{align*}
and the summation in \eqref{representnu} is taken over
\begin{align}\label{conditionvarpi}\varpi_1,\ldots,\varpi_8\in \{\varpi,\ \delta\Lambda_{b,W;I}\} \ \textrm{ with }\ \ \varpi_j=\varpi \ \textrm{
for some }\ \ 1\leqslant j\leqslant 8.\end{align}
For $\boldsymbol{\varpi}=(\varpi_1,\ldots,\varpi_8)$ satisfying \eqref{conditionvarpi}, recalling \eqref{defineStheta}, we have
\begin{align}\label{circle2}\nu_Q^\ast(\boldsymbol{\varpi})=\int_0^1S(\theta;\boldsymbol{\varpi})d\theta.
\end{align}

The definition of the major arcs is slightly different from the classical form, which we introduce in the following
\begin{align*}\mathcal{M}(M_0)=\bigcup_{1\leqslant q\leqslant
M_0}\bigcup_{\substack{r=1
 \\ (r,q)=1}}^q\mathcal{M}(q,r;M_0),\end{align*}
where the intervals $\mathcal{M}(q,r;M_0)$ are
\begin{align*}\mathcal{M}(q,r;M_0)=\Big\{\theta:\ \big|\theta-\frac{r}{q}\big|\leqslant \frac{M_0}{q(Y/W)^2}\Big\}.\end{align*}
The intervals $\mathcal{M}(q,r;M_0)$ are pairwise disjoint for $1\leqslant
r\leqslant q\leqslant M_0$ and $(r,q)=1$ provided that $2M_0<Y/W$.

\begin{lemma}[Lemma 6.1 \cite{Zhao2022}]\label{lemmadensityincmajor}Suppose that $Q$ is translation invariant and indefinite. Suppose that $\boldsymbol{\varpi}=(\varpi_1,\ldots,\varpi_8)$ satisfies \eqref{conditionvarpi}. Let $\delta=\delta_{b,W;I}(\mathcal{A})$, and let $\delta\geqslant (\log X)^{-50}$.  Let $M_0\leqslant (\log X)^{50}$.
Suppose that
\begin{align}\label{lowermajor}\Big|\int_{\mathcal{M}(M_0)}S(\theta;\boldsymbol{\varpi})d\theta\Big|\gg \frac{\delta^8 W^2Y^{6}}{\phi(W)^{8}}.\end{align}
Then there exist $c\in \mathbb{Z}$, $q\in \mathbb{Z}^{+}$, a subinterval $J\subseteq I$ and $C_Q>0$ such that
 \begin{align*}(c,Wq)=1,\ q\leqslant M_0, \ \ |J|\geqslant C_Q\delta M_0^{-3.1}|I|\ \ \textrm{ and }\ \ \delta_{c,Wq;J}(\mathcal{A})\geqslant \delta(1+C_QM_0^{-3.1}).
 \end{align*}
\end{lemma}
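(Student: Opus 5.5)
The plan is to follow the proof of Lemma 6.1 in \cite{Zhao2022}: a standard density-increment argument on the major arcs. Fix $\theta=\frac{r}{q}+\eta\in\mathcal{M}(q,r;M_0)$ and write $S(\theta;\boldsymbol{\varpi})$ as a product structure in the variables. Since $\boldsymbol{\varpi}$ satisfies \eqref{conditionvarpi}, some $\varpi_j$ equals $\varpi$. The guiding heuristic is that on the major arcs $S$ factors approximately through the residue classes modulo $q$ and through short subintervals of $I$ at scale governed by $\eta$. If every $\varpi$ had small average over every progression $\{x\equiv u \pmod q\}$ restricted to every subinterval $J$, then the major-arc integral would be small, contradicting \eqref{lowermajor}.

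\textbf{Step 1 (splitting).} Partition $I$ into subintervals $J$ of length roughly $\kappa|I|$, where $\kappa\asymp\delta M_0^{-3.1}$ (up to a constant $C_Q$). On each block $J\times$(residue class $u$ mod $q$), replace $e(\theta Q(\mathbf{x}))$ by $e_q(rQ(\mathbf{u}))e(\eta Q(\mathbf{t}_J))$. The error is controlled by $|\eta|\,Y^2\kappa/W^2\ll M_0\kappa$ times the trivial size. Summing over $r$ and integrating over $|\eta|\leqslant M_0/(q(Y/W)^2)$ gives a total trivial size of $\ll M_0^{2}\frac{W^2Y^6}{\phi(W)^8}$ times this relative error. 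This error must be $\leqslant \frac12\delta^8\frac{W^2Y^6}{\phi(W)^8}$, and this is what dictates the power $M_0^{-3.1}$ in the interval length.

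\textbf{Step 2 (main term).} After the approximation, the main term is a sum over $q\leqslant M_0$, over blocks $J$ and over residues $\mathbf{u}$, of products $\prod_j \Sigma_j(J,u_j)$ times Gauss-sum and singular-integral weights. Here $\Sigma_j(J,u)=\sum_{x\in J,\,x\equiv u\,(q)}\varpi_j(x)$, and $\sum_{x\in J,\,x\equiv u}\varpi(x)=\mathcal{N}_{Wu+b,Wq;J}(\mathcal{A})-\delta\,\mathcal{N}_{Wu+b,Wq;J}(\mathbb{N})$. By Siegel--Walfisz (using $W,q\leqslant(\log X)^{50}$), $\mathcal{N}_{c,Wq;J}(\mathbb{N})=|J|/\phi(Wq)\,(1+o(1))$. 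The terms with $(Wu+b,Wq)>1$ are negligible. Bounding the Gauss sums trivially by $q^{8}$ and the singular integral by $\ll Y^8(1+Y^2|\eta|/W^2)^{-2}$, \eqref{lowermajor} yields some $q\leqslant M_0$, some $J$ and some $u$ with
\[
\Big|\sum_{x\in J,\,x\equiv u\,(q)}\varpi(x)\Big|\gg M_0^{-3}\,\delta\,\frac{|J|}{\phi(Wq)}.
\]

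\textbf{Step 3 (positivity).} This is the main obstacle: the deviation found in Step 2 may be negative. To handle it, use $\sum_{J,u}\sum\varpi=0$, which holds because $\delta$ is exactly the global relative density. For a mean-zero family with one large absolute deviation, averaging $|\cdot|+(\cdot)$ forces some positive deviation of comparable size, possibly after losing a constant factor, or a factor $M_0^{0.1}$ coming from the count of blocks. Converting this into a relative density gives $\delta_{c,Wq;J}(\mathcal{A})\geqslant\delta(1+C_QM_0^{-3.1})$ with $c=Wu+b$ coprime to $Wq$ and $|J|\geqslant C_Q\delta M_0^{-3.1}|I|$, as required.
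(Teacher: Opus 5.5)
The paper does not prove this lemma; it quotes Lemma~6.1 of Zhao's earlier paper, so your outline has to stand on its own. Your overall strategy is the standard one: freeze the phase on boxes $\times$ residue classes mod $q$, find a biased block, then use $\sum_x\varpi(x)=0$ for positivity. As written, however, Steps 1--3 do not close, and Steps 1--2 fail for the same reason: you never use that $\varpi$ has small $\ell^1$ mass.

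\textbf{Step 1.} With trivial size $M_0^2W^2Y^6\phi(W)^{-8}$, the freezing error is $\asymp M_0^3\kappa\,W^2Y^6\phi(W)^{-8}$. Requiring it to be $\leqslant\frac12\delta^8W^2Y^6\phi(W)^{-8}$ forces $\kappa\ll\delta^8M_0^{-3}$, not $\kappa\asymp\delta M_0^{-3.1}$. With your $\kappa$ the error has relative size $\delta M_0^{-0.1}$, which swamps $\delta^8$, so you do not obtain $|J|\geqslant C_Q\delta M_0^{-3.1}|I|$.

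The missing observation is $\sum_x|\varpi(x)|=2\delta(1-\delta)\mathcal{N}_{b,W;I}(\mathbb{N})$, because $\varpi=(1-\delta)\Lambda_{b,W;I}$ on $\mathcal{A}'$ and $\varpi=-\delta\Lambda_{b,W;I}$ off it. Together with $\|\delta\Lambda_{b,W;I}\|_1=\delta\mathcal{N}_{b,W;I}(\mathbb{N})$, this puts $\delta^8$ into the trivial size, and then $\kappa\asymp M_0^{-3.1}$ suffices. You also need translation invariance here: since $x_j\approx X/W$ rather than $Y/W$, it is what gives $|Q(\mathbf{x})-Q(\mathbf{t}_B)|\ll\kappa(Y/W)^2$.

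\textbf{Step 2.} The same gap appears here. The pointwise bound $|\varpi|\leqslant\Lambda_{b,W;I}$ on the other seven factors yields no $\delta$'s from slots with $\varpi_j=\varpi$. If all eight slots are $\varpi$, you get relative deviation $\gg\delta^8M_0^{-2}$, not $\delta M_0^{-3}$. Also, once the weights vary from box to box there is no singular integral whose decay you may invoke; only the trivial $\eta$-measure bound is available, and it is enough.

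\textbf{Step 3} is false as stated. A mean-zero family with one large absolute deviation need not contain any large positive deviation. For example, take one block with $\Sigma=-\epsilon m$ and the other $N-1$ blocks each with $\Sigma=+\epsilon m/(N-1)$, where $m$ is the common block mass. This is compatible with $\sum\Sigma=0$ and with $\varpi\geqslant-\delta\Lambda_{b,W;I}$. The $|\cdot|+(\cdot)$ trick needs an $\ell^1$ lower bound. Passing from one block to $\ell^1$ costs the number of blocks, $N\gg\kappa^{-1}\gg\delta^{-1}M_0^{3.1}$, not a constant or $M_0^{0.1}$.

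So never pass to a single block. Since boxes are products and $\mathbf{u}$ runs over all of $(\mathbb{Z}/q\mathbb{Z})^8$, the frozen main term is at most
\[
\sum_{q\leqslant M_0}\frac{2M_0\phi(q)}{q(Y/W)^2}\prod_{j=1}^{8}\sum_{J,u}|\Sigma_j(J,u)|.
\]
Bound seven factors by $2\delta\mathcal{N}_{b,W;I}(\mathbb{N})\ll\delta Y/\phi(W)$, and write $\Sigma=\Sigma_{j_0}$ for a slot with $\varpi_{j_0}=\varpi$. This gives some $q\leqslant M_0$ with $\sum_{J,u}|\Sigma(J,u)|\gg\delta M_0^{-2}\mathcal{N}_{b,W;I}(\mathbb{N})$.

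Since $\sum_{J,u}\Sigma(J,u)=0$ exactly, the same lower bound holds for $\sum_{J,u}(|\Sigma|+\Sigma)$. Discard the classes with $(Wu+b,Wq)>1$; their total mass is $O(\omega(q)\log X)$, which is negligible. Then pigeonhole against the weights $\mathcal{N}_{Wu+b,Wq;J}(\mathbb{N})$. This yields a block with $\delta_{Wu+b,Wq;J}(\mathcal{A})\geqslant\delta(1+cM_0^{-2})$ and $|J|\asymp M_0^{-3.1}|I|$, which is stronger than what the lemma requires.
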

Lemma \ref{lemmadensityincmajor} holds for any $s$, where $s$ is the number of variables in the quadratic form $Q$. However, Lemma \ref{lemmadensityincmajor} itself cannot be used to deduce a Roth's type result unless one is equipped with desired minor arcs estimate.  If we follow the proof of \cite[Lemma 6.1]{Zhao2022}, we can see that the similar conclusion holds with $\mathcal{M}$ replaced by $\mathfrak{M}$. We plan to adopt a cheaper way to transfer $\mathcal{M}$ to $\mathfrak{M}$ in the proof of Proposition \ref{lemmadensityincrease}.

\vskip0.5cm

\noindent {\it Proof of Proposition \ref{lemmadensityincrease}.} Since the number of solutions to $Q(\mathbf{x})=0$ with $x_i=x_j$ for some $i\not=j$ is $O(X^{5+\varepsilon})$, we have $\nu_Q^\ast (\mathcal{A})\ll X^{5+\varepsilon}$.
Corollary \ref{extensioncorollary} yields
\begin{align*}\nu_Q(b,W;I)\gg \frac{W^2Y^{6}}{\phi(W)^8}.\end{align*}
Then we have
\begin{align*}|\nu_Q^\ast(\mathcal{A})-\delta^8\nu_Q(b,W;I)|\gg \frac{\delta^8W^2Y^{6}}{\phi(W)^8}.\end{align*}
In view of \eqref{representnu}, one has
\begin{align}\label{nulower}|\nu_Q^\ast(\boldsymbol{\varpi})|\gg \frac{\delta^8 W^2 Y^{6}}{\phi(W)^8}\end{align}
for some $\boldsymbol{\varpi}$ satisfying \eqref{conditionvarpi}. Let $M_0=MM'^{1+c_1}$. Recalling \eqref{defineM'} and \eqref{definec1}, we have
$M_0=M^{2+\frac{10000}{c_0}}$. For $q\leqslant M'$, one has
$\frac{Mq^{c_1}}{(Y/W)^2}\leqslant \frac{MM'^{1+c_1}}{q(Y/W)^2}$. Thus, $\mathfrak{M}(M')\subseteq \mathcal{M}(M_0)$.
Let $\mathfrak{p}=\mathbb{T}\setminus \mathcal{M}(M_0)$. Then $\mathfrak{p}\subseteq \mathfrak{m}(M')$.
By \eqref{circle2}, \begin{align*}\nu_Q^\ast(\boldsymbol{\varpi})
=\int_{\mathcal{M}(M_0)}S(\theta;\boldsymbol{\varpi})d\theta +\int_{\mathfrak{p}}S(\theta;\boldsymbol{\varpi})d\theta.\end{align*}
By Proposition \ref{propminor}, we have
\begin{align}\label{Salphavarpi4}
\int_{\mathfrak{p}}\big|S(\theta;\boldsymbol{\varpi})\big|d\theta
\leqslant  \int_{\mathfrak{m}(M')}\big|S(\theta;\boldsymbol{\varpi})\big|d\theta\ll \frac{W^2Y^{6}}{\phi(W)^{8}}M^{-1}.\end{align}
On choosing $M=\delta^{-8}c_0(Q)$ with some $c_0(Q)>0$ sufficiently large, we obtain from \eqref{nulower} and \eqref{Salphavarpi4} that
\begin{align*}\Big|\int_{\mathcal{M}(M_0)}S(\theta;\boldsymbol{\varpi})d\theta\Big|\gg \frac{\delta^8 W^2Y^{6}}{\phi(W)^8}.\end{align*}
This completes the proof of Proposition \ref{lemmadensityincrease} on applying Lemma \ref{lemmadensityincmajor}.

\section*{Acknowledgments}
This work is supported by the National Key Research and Development Program of China (Grant No. 2021YFA1000701) and the National Natural Science Foundation of China  (Grant No. 12471088).


\begin{thebibliography}{99}



\bibitem{BS} T. F. Bloom and  O. Sisask, {\it An improvement to the Kelley-Meka bounds on three-term arithmetic progressions},
arXiv:2309.02353.


\bibitem{BP} T. D. Browning and S. Prendiville, {\it A transference approach to a Roth-type theorem in the squares}, Int. Math. Res. Not.
IMRN.  {\bf 7} (2017), 2219--2248.


\bibitem{Chow} S. Chow, {\it Roth-Waring-Goldbach},  Int. Math. Res. Not. IMRN.
 {\bf 8} (2018), 2341--2374.

\bibitem{CLP} S. Chow, S. Lindqvist and S. Prendiville, {\it Rado's criterion over squares and higher powers}, J. Eur. Math. Soc. (JEMS)
 {\bf 23} (2021), 1925--1997.


\bibitem{Davenport} H. Davenport, 
{\it Analytic methods for Diophantine equations and Diophantine inequalities},
Cambridge University Press, Second Edition, 2005

\bibitem{Green2005} B. Green, {\it Roth's theorem in the primes},
Ann. of Math. (2) {\bf 161} (2005), 1609--1636.


\bibitem{Green}
B. Green,
{\it Quadratic forms in $8$ prime variables},
{Geom. Funct. Anal.} 35 (2025), 1587--1637.

\bibitem{Helfgott} H.~A. Helfgott, {\it Growth and expansion in algebraic groups over finite fields}, In: Analytic methods in arithmetic geometry, 71--111, Contemp. Math. Centre Rech. Math. Proc., 740, Amer. Math. Soc., 2019.

\bibitem{HH} K. Henriot and K. Hughes, {\it Restriction estimates of $\varepsilon$-removal type for $k$-th powers},
 Math. Ann. {\bf 372} (2018), 963--998.


\bibitem{Keil} E. Keil, {\it Translation invariant quadratic forms in dense sets},
arXiv:1308.6680.

\bibitem{Keilrefine} E. Keil, {\it Some refinements for translation invariant quadratic forms in dense sets},
arXiv:1408.1535.

\bibitem{KM} Z. Kelley and R. Meka, {\it Strong Bounds for $3$-Progressions}, In: 2023 IEEE 64th Annual Symposium on Foundations of Computer Science (FOCS 2023), 933–973, IEEE Computer Society, Los Alamitos, CA, 2023.
    
    \bibitem{Lang} S. Lang, {\it Algebra}, Graduate Texts in Mathematics, Vol. 211, Springer, 3rd edition, 2002.
        




\bibitem{Liu} J. Liu, {\it Integral points on quadrics with prime coordinates},
Monatsh. Math.  {\bf 164} (2011), 439--465.

 \bibitem{R1} K. F. Roth, {\it On certain sets of integers},
J. London Math. Soc. {\bf 28} (1953), 104--109.

    \bibitem{S16} J.-P. Serre, {\it Finite Groups: An Introduction. Surveys of Modern Mathematics}, Vol. 10, Int. Press, Somerville, 2016.
        
\bibitem{Zhao2016} L. Zhao, {\it The quadratic form in nine prime variables},
Nagoya Math. J. {\bf 223} (2016), 21--65.


\bibitem{Zhao2019} L. Zhao, {\it On translation invariant quadratic forms in dense sets},  Int. Math. Res. Not.
IMRN. {\bf 4} (2019), 961--1004.

\bibitem{Zhao2022}
L. Zhao,
{\it Translation invariant quadratic forms and dense sets of primes},
{Tran. Amer. Math. Soc.} 375 (2022), 725--752.


\end{thebibliography}
\end{document}